\DeclareMathOperator{\supp}{supp}
\DeclareMathOperator{\SL}{SL}
\DeclareMathOperator{\Lip}{Lip}
\DeclareMathOperator{\Sol}{Sol}
\DeclareMathOperator{\diam}{diam}
\DeclareMathOperator{\mass}{mass}
\DeclareMathOperator{\FV}{FV}
\DeclareMathOperator{\rk}{rk}
\newcommand{\dimAN}{\dim_{\text{AN}}}
\newcommand{\CLip}{C^\text{Lip}}
\newcommand{\Ccell}{C^\text{cell}}
\newcommand{\N}{\ensuremath{\mathbb{N}}}
\newcommand{\R}{\ensuremath{\mathbb{R}}}
\newcommand{\Hyp}{\ensuremath{\mathbb{H}}}
\newcommand{\Q}{\ensuremath{\mathbb{Q}}}
\newcommand{\Z}{\ensuremath{\mathbb{Z}}}
\renewcommand{\vec}{\mathbf}
\DeclareMathOperator{\dlk}{\text{Lk}^\downarrow_\infty}
\DeclareMathOperator{\Lk}{Lk}
\DeclareMathOperator{\downlk}{\text{Lk}^\downarrow}
\newcommand{\dcomb}{\ensuremath{d_{\text{comb}}}}
\newcommand{\fa}{\ensuremath{\mathfrak{a}}}
\newcommand{\fb}{\ensuremath{\mathfrak{b}}}
\newcommand{\fc}{\ensuremath{\mathfrak{c}}}
\newcommand{\fd}{\ensuremath{\mathfrak{d}}}
\newcommand{\pinfty}{\ensuremath{\partial_\infty}}
\newcommand{\xop}{{X_\infty^0(\fa)}}
\newcommand{\V}{\ensuremath{\mathcal{V}}}
\newtheorem{thm}{Theorem}[section]
\newtheorem{theorem}[thm]{Theorem}
\newtheorem{lemma}[thm]{Lemma}
\newtheorem{cor}[thm]{Corollary}
\newtheorem{conj}[thm]{Conjecture}
\theoremstyle{remark}
\title[Lipschitz connectivity and filling invariants]{Lipschitz connectivity and filling invariants in solvable groups and buildings}
\author{Robert Young}
\date{\today}
\begin{document}
\bibliographystyle{alpha}
\begin{abstract}
  We give some new methods, based on Lipschitz extension theorems, for
  bounding filling invariants of subsets of nonpositively curved
  spaces.  We apply our methods to find sharp bounds on higher-order
  Dehn functions of $\Sol_{2n+1}$, horospheres in euclidean buildings,
  Hilbert modular groups, and certain $S$-arithmetic groups.
\end{abstract}

\maketitle
\section{Introduction}
Filling invariants of a group or space, such as Dehn functions and
higher-order Dehn functions, are quantitative versions of finiteness
properties.  There are many methods for bounding the Dehn function,
but bounds on the Dehn function are often difficult to generalize to
higher-order Dehn functions.  For example, one can prove that a
non-positively curved space has a Dehn function which is at most
quadratic in a couple of lines: the fact that the distance function is
convex implies that the disc formed by connecting every point on the
curve to a basepoint on the curve has quadratically large area.  On
the other hand, proving that a non-positively curved space has a
$k$th-order Dehn function bounded by $V^{(k+1)/k}$ takes several pages
\cite{GroFRM, WengerShort}.  In this paper, we describe some new
methods for bounding higher-order Dehn functions and apply them to
solvable groups and subsets of nonpositively curved spaces.

One reason that higher-order Dehn functions are harder to bound is
that the geometry of spheres is more complicated than the geometry of
curves.  A closed curve is geometrically very simple.  It has diameter
bounded by its length, it has a natural parameterization by length,
and a closed curve in a space with a geometric group action can be
approximated by a word in the group.  None of these hold for spheres.
A $k$-sphere of volume $V$ may have arbitrarily large diameter, has no
natural parameterization, and, though it can often be approximated by
a cellular or simplicial sphere, that sphere may have arbitrarily many
cells of dimension less than $k$.

One way around this is to consider Lipschitz extension properties.  A
typical Lipschitz extension property is Lipschitz $k$-connectivity; we
say that a space $X$ is \emph{Lipschitz $k$-connected} (with constant
$c$) if there is a $c$ such that for any $0\le d\le k$ and any
$l$-Lipschitz map $f:S^d\to X$, there is a $cl$-Lipschitz extension
$\bar{f}:D^{d+1}\to X$.  The advantage of dealing with Lipschitz
spheres rather than spheres of bounded volume is that techniques for
filling closed curves often generalize to Lipschitz spheres.  For
example, the same construction that shows that a non-positively curved
space has quadratic Dehn function shows that such a space is Lipschitz
$k$-connected for any $k$.  Any map $f:S^d\to X$ can be extended to a
map $\bar{f}:D^{d+1}\to X$ by coning $f$ off to a point along
geodesics, and if $f$ is Lipschitz, so is $\bar{f}$.

In this paper, we describe a way to use Lipschitz connectivity to
prove bounds on higher-order filling functions of subsets of spaces
with finite Assouad-Nagata dimension.  These spaces include euclidean
buildings and homogeneous Hadamard manifolds \cite{LangSch}, and we
will show that a higher-dimensional analogue of the
Lubotzky-Mozes-Raghunathan theorem holds for Lipschitz $n$-connected
subsets of spaces with finite Assouad-Nagata dimension.  Recall that
Lubotzky, Mozes, and Raghunathan proved that
\begin{theorem}\cite{LMR}\label{thm:LMR}
  If $\Gamma$ is an irreducible lattice in a semisimple group $G$ of
  rank $\ge 2$, then the word metric on $\Gamma$ is quasi-isometric to
  the restriction of the metric on $G$ to $\Gamma$.
\end{theorem}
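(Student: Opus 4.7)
The trivial direction is that the $G$-distance is bounded by the word distance, since any fixed generator has bounded Riemannian length. The content is the reverse inequality: for $\gamma \in \Gamma$ with $d_G(1,\gamma) = R$, one must produce a word in the generators of length $O(R)$. My plan is to follow the LMR strategy, which combines reduction theory with a ``controlled generation'' result for arithmetic unipotent subgroups that uses the rank $\ge 2$ hypothesis in an essential way.

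First, since $G$ has rank $\ge 2$ and $\Gamma$ is irreducible, Margulis arithmeticity lets us assume that $\Gamma$ is commensurable with $\mathbf{G}(\Z)$ for some $\Q$-form $\mathbf{G}$ of $G$; after replacing $\Gamma$ by a finite-index subgroup the statement is unchanged. Fix a minimal $\Q$-parabolic $P_0 = M_0 A_0 N_0$ and apply Borel--Harish-Chandra reduction theory to produce a Siegel set $\mathfrak{S} \subset G$ that maps onto $\Gamma \backslash G$ with finite fibres. This lets us reduce statements about $\gamma \in \Gamma$ to statements about $\mathfrak{S}$-coordinates of $\gamma$.

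Next I would choose a generating set $S$ for $\Gamma$ consisting of (i) a finite ``anchor'' set lying in a compact neighbourhood of $1$, together with (ii) the integer points $U_\alpha(\Z)$ of the one-parameter unipotent subgroups attached to each $\Q$-root $\alpha$. A unipotent element $u_\alpha(x)$ satisfies $d_G(1,u_\alpha(x)) \sim \log(1+|x|)$, so the core lemma to prove is that $u_\alpha(x)$ can be written as a word of length $O(\log(1+|x|))$ in $S$. Given this lemma, the assembly is routine: for $\gamma\in\Gamma$ with $d_G(1,\gamma)=R$, the Cartan $KAK$ decomposition writes $\gamma = k_1 a k_2$ with $a$ in a positive Weyl chamber of $A_0$ and $d_G(1,a)\lesssim R$; the Bruhat decomposition expresses $a$ as a bounded product of unipotent factors, and applying the lemma to each factor yields a word of length $O(R)$.

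The main obstacle is the core lemma on controlled unipotent generation. The idea is to exploit the Chevalley commutator relations among root subgroups, $[U_\alpha,U_\beta] \subset \prod_{i,j\ge 1} U_{i\alpha+j\beta}$, together with conjugation by a semisimple integer $t\in A_0(\Z)$ chosen so that $t$ expands the $\alpha$-direction while contracting some other root direction $\beta$. Iterating this ``squaring trick'' converts an element of large norm into a short word of logarithmic length, but the construction of such a $t$ requires the $\Q$-split torus $A_0$ to have dimension $\ge 2$, which is precisely the rank $\ge 2$ hypothesis. In rank one there is no second root direction to contract, and indeed the theorem famously fails there (e.g.\ the horocyclic subgroups of $\SL_2(\Z[\sqrt{d}])$ are exponentially distorted). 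Making this step effective --- choosing the right semisimple elements for every positive root simultaneously, and bookkeeping the combinatorics of Chevalley--Steinberg relations so that the logarithmic bound propagates through the Bruhat reduction without blow-up --- constitutes the bulk of the work in \cite{LMR}.
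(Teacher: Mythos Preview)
The paper does not prove this theorem. Theorem~\ref{thm:LMR} is stated with the citation \cite{LMR} and is used purely as background and motivation (it is the $0$-dimensional case of Conjecture~\ref{conj:BW}); no argument for it appears anywhere in the paper. So there is no ``paper's own proof'' to compare your proposal against.

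As a sketch of the actual Lubotzky--Mozes--Raghunathan argument, your outline captures the two essential ingredients --- Margulis arithmeticity plus reduction theory to put the problem in an arithmetic setting, and a logarithmic word-length bound for unipotent elements obtained from commutator and conjugation relations that require a torus of rank $\ge 2$. The assembly step you describe, however, is not correct as written: in $\gamma = k_1 a k_2$ the factors $k_1,k_2\in K$ are not elements of $\Gamma$, and the Bruhat decomposition of $a$ likewise produces factors that need not lie in $\Gamma$, so you cannot simply apply the unipotent lemma to those pieces. The genuine LMR argument works more carefully inside $\Gamma$ throughout, using the Siegel-set description to reduce a given $\gamma$ to a controlled product of \emph{integral} unipotent and bounded factors; the passage from the unipotent lemma to the full group is where most of the bookkeeping lives, and your sketch glosses over exactly that.
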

One way to state this theorem is that the inclusion
$\Gamma\hookrightarrow G$ does not induce any distortion of lengths.
That is, there is a $c>0$ such that if $x,y\in \Gamma$ are connected
by a path of length $l$ in $G$, then they are connected by a path of
length $\le cl$ in the Cayley graph of $\Gamma$.  We can think of
this as an efficient 1-dimensional filling of a 0-sphere.  Many authors have
conjectured that when $G$ has higher rank, we can fill
higher-dimensional spheres efficiently; for example, Thurston famously
conjectured that $\SL(n;\Z)$ has quadratic Dehn function for $n\ge 4$
\cite{GerstenSurv}, and Gromov conjectured that the $(k-2)$-th order
Dehn function of a lattice in a symmetric space of rank $k$ should be
bounded by a polynomial \cite{GroAI}.  Bux and Wortman
\cite{BuxWortFiniteness} conjectured that
filling volumes should be undistorted in lattices in higher-rank
semisimple groups.  We will state a version of this conjecture in
terms of homological filling volumes; in a highly-connected space,
these are equivalent to homotopical filling volumes in dimensions
above 2 \cite{GroFRM,White,Groft}.

To state the conjecture, we introduce Lipschitz chains.  A Lipschitz $d$-chain in
$Y$ is a formal sum of Lipschitz maps $\Delta_d\to Y$.  One can define
the boundary of a Lipschitz chain as for singular chains, and this
gives rise to a homology theory.  If $\alpha$ is a Lipschitz $d$-cycle
in $Y$, define
$$\FV^{d+1}_Y(\alpha)=\inf_{\partial \beta=\alpha}\mass \beta.$$
to be the filling volume of $\alpha$ in $Y$.  In particular, if $Y$ is
a geodesic metric space and $\alpha$ is the 0-cycle $x-y$, then
$\FV^1_Y(\alpha)=d(x,y)$.  

If $Z\subset X$, we say
that $Z$ is \emph{undistorted up to dimension $n$} if there is some
$r\ge 0$ and $c$ such that if $\alpha$ is a Lipschitz $d$-cycle in
$Z$ and $d<n$, then
$$\FV^{d+1}_{Z}(\alpha) \le c\FV^{d+1}_X(\alpha)+c.$$ 
(Note that this differs from Bux and Wortman's definition in
\cite{BuxWortFiniteness}; Bux and Wortman's definition deals with
extending spheres in a neighborhood of $Z$ to balls in a larger
neighborhood.)
\begin{conj}[see \cite{BuxWortFiniteness}, Question 1.6]\label{conj:BW}
  If $\Gamma$ is an irreducible lattice in a semisimple group $G$ of
  rank $n$, then there is a nonempty $\Gamma$-invariant subset
  $Z\subset G$ such that $d_{\text{Haus}}(Z,\Gamma)<\infty$ and $Z$ is
  \emph{undistorted up to dimension $n-1$}.
\end{conj}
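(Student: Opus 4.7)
The plan is to take $Z$ to be a $\Gamma$-invariant, cocompact subset of $G$ obtained by reduction theory: specifically, the complement in $G$ of a disjoint family of open horoballs indexed by the $\Gamma$-conjugacy classes of rational parabolics, with horoballs chosen deep enough that $\Gamma$ acts cocompactly on $Z$. Standard Borel--Serre reduction then gives $d_{\text{Haus}}(Z,\Gamma)<\infty$, so the conjecture reduces to proving that $Z$ is undistorted up to dimension $n-1$ in $G$ (equivalently, in the associated symmetric space or Bruhat--Tits building $X$).

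The next step is to invoke the two ingredients advertised in the introduction. First, $X$ has finite Assouad--Nagata dimension by \cite{LangSch}. Second, the paper's general theorem states that a Lipschitz $k$-connected subset of a space of finite Assouad--Nagata dimension admits efficient fillings of Lipschitz $(k+1)$-cycles. Combining these, the conjecture is reduced to showing that $Z$ is Lipschitz $(n-2)$-connected.

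The Lipschitz $(n-2)$-connectivity should be established by exploiting the fact that the rational Tits building of $G$ is $(n-2)$-connected by the Solomon--Tits theorem. Given an $l$-Lipschitz map $f:S^d \to Z$ with $d \le n-2$, I would first cone $f$ to a point in $X$ using non-positive curvature, producing an $O(l)$-Lipschitz disc in $X$; then push the disc out of the horoballs it enters along geodesic flows toward the appropriate parabolic direction; and finally, where the flow conflicts between neighboring horoballs, use a quantitative (Lipschitz) version of the Solomon--Tits connectivity of the Tits building to interpolate consistently. The output is a Lipschitz extension into $Z$ with multiplicative constant depending only on $G$ and $\Gamma$.

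The main obstacle is the quantitative control in the last step, especially for non-uniform lattices whose cusps carry a nontrivial nilpotent (rather than purely abelian) geometry, since there it is not clear that one can Lipschitz-fill spheres in $\partial Z$ efficiently. Closely related is the need for a Lipschitz, rather than merely topological, refinement of the Solomon--Tits theorem along the lines of the Lipschitz extension results used elsewhere in this paper. These technical difficulties are presumably why the full conjecture remains open and why this paper restricts to tractable cases such as $\Sol_{2n+1}$, Hilbert modular groups, and certain $S$-arithmetic groups, where the geometry of the horoball boundaries is explicit enough to carry out the filling by hand.
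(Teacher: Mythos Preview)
The statement you are asked to prove is labelled \textbf{Conjecture} in the paper, not Theorem; the paper does not contain a proof of it, and indeed states explicitly that it only ``make[s] a step toward proving Conj.~\ref{conj:BW}'' by reducing undistortedness to a Lipschitz extension property (Theorem~\ref{thm:LipToLMR}) and then verifying that property in specific cases ($\Sol_{2n-1}$, Hilbert modular groups, horospheres in euclidean buildings, certain $S$-arithmetic groups). There is therefore no ``paper's own proof'' to compare against.

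Your outline is not a proof either, and you seem to be aware of this: the final paragraph correctly identifies the missing ingredient. The first two steps of your plan---taking $Z$ to be the complement of a $\Gamma$-equivariant family of horoballs via reduction theory, and invoking finite Assouad--Nagata dimension together with Theorem~\ref{thm:LipToLMR} to reduce to Lipschitz $(n-2)$-connectivity of the horoball boundaries---are exactly the strategy the paper uses in its special cases (see the derivations of Theorems~\ref{thm:HilbertModular} and~\ref{thm:SArith}). The third step, however, is where the genuine gap lies: a quantitative, Lipschitz version of Solomon--Tits for the rational Tits building, uniform over the cusp geometry, is not available in the literature and is not supplied here. The paper's successful cases work precisely because the horosphere boundaries have explicit product or building structure (e.g.\ $\Sol_{2n-1}$ as a horosphere in $(\Hyp_2)^n$, or horospheres in thick euclidean buildings) for which Lipschitz connectivity can be verified directly via the slice-map and downward-link arguments of Sections~\ref{sec:sol} and~\ref{sec:xopApartments}--\ref{sec:constructingOmega}. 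For a general irreducible lattice the horosphere geometry is governed by a nilpotent group that need not decompose so conveniently, and no argument in the paper or your proposal handles that case.
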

Here, $d_{\text{Haus}}(Z,\Gamma)$ represents the Hausdorff distance
between the two sets.

Theorem~\ref{thm:LMR} is a special case of this conjecture.  As
Bestvina, Eskin, and Wortman note in \cite{BestEskWort}, Conj.\
\ref{conj:BW} would imply that the $k$th-order Dehn function of
$\Gamma$ is bounded by $V^{(k+1)/k}$.  In recent years, a significant
amount of progess has been made toward these conjectures.  Dru\c{t}u
proved that a lattice of $\Q$-rank 1 in a symmetric space of $\R$-rank
$\ge 3$ has a Dehn function bounded by $n^{2+\epsilon}$ for any
$\epsilon>0$ \cite{DrutuFilling}, Leuzinger and Pittet proved that,
conversely, any irreducible lattice in a symmetric space of rank 2
which is not cocompact has an exponentially large Dehn function
\cite{LeuzPitRk2}, and the author proved Thurston's conjecture in the
case that $n\ge 5$ \cite{YoungQuad}.

In this paper, we make a step toward proving Conj.\ \ref{conj:BW} by
showing that, under some conditions on $G$ and $\Gamma$,
undistortedness follows from a Lipschitz extension property.  We say
that $Z$ is \emph{Lipschitz $n$-connected} if there is a $c$ such that
for any $0\le d\le k$ and any $l$-Lipschitz map $f:S^d\to Z$, there is
a $cl$-Lipschitz extension $\bar{f}:D^{d+1}\to Z$.  If $Z\subset Y$,
we say $Z$ is \emph{Lipschitz $n$-connected in $Y$} if, under the
above conditions, there is a $cl$-Lipschitz extension
$\bar{f}:D^{d+1}\to Y$.

\begin{theorem}\label{thm:LipToLMR}
  Suppose that $Z\subset X$ is a nonempty closed subset with metric
  given by the restriction of the metric of $X$.  Suppose that $X$ is
  a geodesic metric space such that the Assouad-Nagata dimension
  $\dimAN(X)$ of $X$ is finite.  Suppose that one of the following is true:
  \begin{itemize}
  \item $Z$ is Lipschitz $n$-connected.
  \item $X$ is Lipschitz $n$-connected, and if $X_p,p\in P$ are the connected components of
    $X\smallsetminus Z$, then the sets $H_p=\partial X_p$ are
    Lipschitz $n$-connected with uniformly bounded
    implicit constant.
  \end{itemize}
  Then $Z$ is undistorted up to dimension $n+1$.  
\end{theorem}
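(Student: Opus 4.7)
The plan is to take a Lipschitz $d$-cycle $\alpha \subset Z$ with $d \le n$ and a filling $\beta$ in $X$ of mass $V$, replace $\beta$ by a combinatorial model on an auxiliary simplicial complex, and push that model into $Z$ using Lipschitz connectivity. The combinatorial model is a piecewise-Lipschitz map $\phi \colon K \to X$ from a $(d+1)$-dimensional simplicial complex $K$ of controlled geometry, chosen so that $\phi_*[K]$ is a filling of $\alpha$ of comparable mass; the advantage of passing to $K$ is that $\dimAN(K)$ is then bounded in terms of $\dimAN(X)$ alone, so one can invoke the Lipschitz extension theorem of Lang--Schlichenmaier \cite{LangSch}.

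For the discretization, $\dimAN(X) < \infty$ lets one build $\phi$ from a Whitney-style decomposition of the support of $\beta$ at a uniform scale: cover by balls of multiplicity bounded in terms of $\dimAN(X)$, take the nerve, and define $\phi$ so that each simplex has Lipschitz image of comparable diameter in $X$. This yields a bounded-geometry complex $K$ with $\mass(\phi_*[K]) \lesssim V + \mass(\alpha) + 1$ and $\partial \phi_*[K] = \alpha$; after a small perturbation, with loss only in the constants, one may assume $\phi(\partial K) \subset Z$.

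Step 2 is then an application of Lang--Schlichenmaier. In case (i), since $Z$ is Lipschitz $n$-connected and $\dim K \le n+1$, the map $\phi|_{\partial K} \colon \partial K \to Z$ extends to a Lipschitz map $\psi \colon K \to Z$ with controlled constant, and $\psi_*[K]$ is the desired filling. In case (ii), one decomposes $K$ componentwise: for each component $X_p$ of $X \smallsetminus Z$, let $L_p \subset K$ consist of the simplices whose $\phi$-image lies in $X_p$. Lipschitz $n$-connectivity of $X$ allows one to perturb $\phi$ so that it already takes values in $Z$ outside $\bigcup_p \interior(L_p)$ and so that $\phi(\partial L_p) \subset H_p$. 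Applying the extension theorem to each pair $(L_p, \partial L_p)$ with target $H_p$, using the uniform Lipschitz $n$-connectivity of the $H_p$, then yields the filling after reassembly.

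The main obstacle is Step 1: constructing a simplicial approximation of an arbitrary Lipschitz chain in $X$ with simultaneous control on its mass and on the Assouad--Nagata dimension of the resulting complex $K$. Once this discretization is in hand, both cases reduce uniformly to Lang--Schlichenmaier, with the two Lipschitz connectivity hypotheses providing exactly the right targets ($Z$ itself, or the boundaries $H_p$) for the extension theorem to apply.
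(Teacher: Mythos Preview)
Your outline has the right overall shape---discretize the filling, then use Lipschitz connectivity to land in $Z$---but the discretization you sketch does not work, and fixing it is essentially the entire proof.

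First, a ``uniform scale'' nerve is the wrong object. If $K$ is the nerve of a cover of (the support of) $\beta$ at a single scale $s$, you still need a map from $K$ (or its $0$-skeleton) into $Z$ to get started. Sending each vertex to a nearest point of $Z$ is the only candidate, but adjacent vertices at distance $\sim s$ in $K$ can map to points of $Z$ at distance $\sim d(D_k,Z)$, which is unbounded. So that map is not Lipschitz with any constant independent of $\beta$, and your mass bound on $\psi_*[K]$ collapses. The paper resolves this by using the Lang--Schlichenmaier cover itself: cells have diameter proportional to their distance from $Z$, so the nerve $\Sigma$ is a QC complex with \emph{varying} scale, and the map $h^{(0)}:\Sigma^{(0)}\to Z$ becomes Lipschitz with a uniform constant. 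This non-uniform scale is not a technicality; it is what makes the argument go through.

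Second, you write ``define $\phi$ so that each simplex has Lipschitz image of comparable diameter in $X$,'' but extending $\phi$ from $K^{(0)}$ to $K^{(d+1)}$ with values in $X$ requires $X$ to be Lipschitz $d$-connected, which is not assumed in case~(i). The paper never builds a map $K\to X$; instead it goes the other way, constructing a Lipschitz map $g:X\to\Sigma$, pushing $\beta$ forward, applying Federer--Fleming \emph{inside} $\Sigma$ to get a cellular chain in $\Sigma^{(n+1)}$, and then pushing to $Z$ via a fixed Lipschitz map $h:\Sigma^{(n+1)}\to Z$. Because $g$ and $h$ are built once (with constants independent of $\alpha,\beta$), the mass bounds are automatically uniform; in your scheme, the Lang--Schlichenmaier constant for the pair $(\partial K\subset K)$ depends on the Assouad--Nagata \emph{data} of $K$, which in turn depends on $\beta$.

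Finally, the step ``after a small perturbation one may assume $\phi(\partial K)\subset Z$'' and, in case~(ii), ``perturb so that $\phi(\partial L_p)\subset H_p$'' hide real content. In the paper these issues are handled by introducing an auxiliary $\epsilon$-scale cover near $Z$, proving a small-displacement lemma (Lemma~2.9) to control $\alpha-(h\circ g)_\sharp\alpha$, and, for case~(ii), showing that $Z$ is $\epsilon'$-locally Lipschitz $n$-connected and carefully splitting $\Sigma$ into ``coarse'' and ``fine'' simplices to route the extension through the correct $H_p$. None of this follows from a direct appeal to the Lang--Schlichenmaier extension theorem.
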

In the applications in this paper, $X$ will be a CAT(0) space (either
a symmetric space or a building), and $Z$ will either be a horosphere
of $X$ or the complement of a set of disjoint horoballs.

When $X$ is CAT(0), a theorem of Gromov \cite{GroFRM, WengerShort}
implies that the $k$th-order Dehn function of $X$ grows at most as
fast as $V^{(k+1)/k}$ (i.e., if $\alpha$ is a Lipschitz $k$-cycle in
$X$, there is a Lipschitz $(k+1)$-chain $\beta$ in $X$ such that
$\partial \beta=\alpha$ and
$$\mass \beta\lesssim (\mass \alpha)^{(k+1)/k}+c.$$  
Therefore,
\begin{cor}\label{cor:LipToLMR}
  If $X$ is CAT(0) and the hypotheses above hold, the $k$th-order Dehn
  function of $Z$ grows at most as fast as $V^{(k+1)/k}$ for $k\le n$.
\end{cor}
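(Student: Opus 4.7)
The plan is to combine Theorem~\ref{thm:LipToLMR} with the Gromov--Wenger CAT(0) filling inequality that is quoted in the statement. Let $\alpha$ be a Lipschitz $k$-cycle in $Z$ with $\mass\alpha = V$. First, I would fill $\alpha$ inside the ambient space $X$: since $X$ is CAT(0), Gromov's theorem yields a Lipschitz $(k+1)$-chain $\beta$ in $X$ with $\partial\beta = \alpha$ and
$$\mass\beta \lesssim V^{(k+1)/k} + 1,$$
so in particular $\FV^{k+1}_X(\alpha) \lesssim V^{(k+1)/k} + 1$.

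Next, I would invoke Theorem~\ref{thm:LipToLMR}: its hypotheses are exactly the hypotheses of the corollary, so $Z$ is undistorted up to dimension $n+1$. Since $k \le n < n+1$, the cycle $\alpha$ falls into the range to which the undistortedness inequality applies, and there is a constant $c$ independent of $\alpha$ with
$$\FV^{k+1}_Z(\alpha) \;\le\; c\,\FV^{k+1}_X(\alpha) + c \;\lesssim\; V^{(k+1)/k} + 1.$$
This is the claimed bound on the $k$th-order Dehn function of $Z$.

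Because essentially all of the work is packaged inside Theorem~\ref{thm:LipToLMR}, there is no real obstacle at the level of the corollary. The one point I would be careful about is which notion of Dehn function is in play: the $k$th-order Dehn function is sometimes defined homotopically (via maps $D^{k+1}\to Z$) and sometimes homologically (via Lipschitz chain fillings). As the introduction already remarks, citing Gromov, White, and Groft, the two agree up to multiplicative constants in dimensions above $2$ on sufficiently highly-connected spaces, and the Lipschitz $n$-connectivity hypothesis on $Z$ (either directly, or inherited from $X$ and the boundaries $H_p$ in the second alternative) supplies the needed connectivity. With this identification in place, the two displays above complete the proof.
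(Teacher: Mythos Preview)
Your proof is correct and is exactly the argument the paper gives: the corollary is stated immediately after quoting the Gromov--Wenger CAT(0) filling inequality $\mass\beta\lesssim(\mass\alpha)^{(k+1)/k}+c$, and the paper simply says ``Therefore,'' relying on Theorem~\ref{thm:LipToLMR} to transfer this bound from $X$ to $Z$. Your added remark on homotopical versus homological Dehn functions is reasonable but not needed here, since the paper works with the homological filling volume $\FV$ throughout.
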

This bound is often sharp; for instance, if there is a rank-$(k+1)$
flat of $X$ contained in $Z$, then the $k$th-order Dehn function of
$Z$ grows at least as fast as $V^{(k+1)/k}$.

We will apply Theorem~\ref{thm:LipToLMR} to find fillings in a family of solvable groups and in the Hilbert modular groups:
\begin{theorem}\label{thm:Sol}
  The group $\Sol_{2n-1}=\R^{n-1}\ltimes \R^n$ is Lipschitz
  $n-1$-connected, is undistorted in $(\Hyp_2)^n$ up to dimension
  $n$, and its $k$th-order Dehn function is asymptotic to
  $V^{(k+1)/k}$ for $k<n$.
\end{theorem}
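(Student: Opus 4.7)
The statement has three parts: Lipschitz $(n-1)$-connectivity of $Z := \Sol_{2n-1}$, undistortedness of $Z$ in $X := (\Hyp_2)^n$ up to dimension $n$, and the $V^{(k+1)/k}$ asymptotic for the $k$th-order Dehn function when $k<n$. The plan is to establish the Lipschitz connectivity directly and then read off the other two from Theorem~\ref{thm:LipToLMR} and Corollary~\ref{cor:LipToLMR}.

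First I would realize $Z$ as the horosphere $H = \{(p_1,\dots,p_n)\in X : \sum_i \beta_i(p_i)=0\}$ for Busemann functions $\beta_i$ on each factor. In horospherical coordinates $(a,x)\in\R^{n-1}\times\R^n$ the left-invariant metric takes the form $\sum_i\bigl(da_i^2 + e^{-2a_i}\,dx_i^2\bigr)$, so horocyclic directions are compressed exponentially by moving upward in the Cartan direction $A=\R^{n-1}$. Given an $l$-Lipschitz map $f\colon S^d\to Z$ with $d\le n-1$, I would build an $O(l)$-Lipschitz extension $\bar f\colon D^{d+1}\to Z$ via a lift--fill--unlift template. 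First, push the image of $f$ upward in $A$ by $\sim\log l$, along a ray in $A$ chosen so that every horocyclic coordinate is compressed; the lifted sphere has horocyclic diameter $O(1)$, while its $A$-coordinates still range over a set of $A$-diameter $O(l)$. Second, fill the lifted sphere within $Z$ using a standard Euclidean filling in the flat $A$-direction together with a short transverse horocyclic filling; the condition $d+1\le n$ is precisely what is needed for such a $(d+1)$-ball to fit inside the $(n-1)$-dimensional flat $A$ plus one short transverse horocyclic direction. Third, unlift by reversing the original $A$-translation; horocyclic distances re-expand by the factor $l$, but $A$-distances are preserved, so the composite map is $O(l)$-Lipschitz.

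The main obstacle will be the global Lipschitz control of this template. The lift height must interpolate smoothly between $0$ on $\partial D^{d+1}$ (where the template agrees with $f$) and $\log l$ at the apex, and this interpolation has to balance the linear $A$-cost of moving upward against the exponential horocyclic compression factor $e^{-a}$. Concretely, taking the lift height to be a radial bump on $D^{d+1}$ peaking at the center, one must verify cell-by-cell that the Cartan- and horocyclic-direction Lipschitz constants both stay $O(l)$, independent of how much lifting is done. The key quantitative point is that the coning distance $a$ in $A$ and the compression factor $e^{-a}$ cancel exactly, so no logarithmic loss enters at the seams between pieces; one must also verify that the final Lipschitz bounds are in the metric on $Z$ induced from $X$, which is what Theorem~\ref{thm:LipToLMR} takes as input.

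With Lipschitz $(n-1)$-connectivity in hand, Theorem~\ref{thm:LipToLMR} applied with $X = (\Hyp_2)^n$ (a homogeneous Hadamard manifold, hence of finite Assouad-Nagata dimension by Lang-Schlichenmaier) and $Z = \Sol_{2n-1}$ (now Lipschitz $(n-1)$-connected) yields undistortedness up to dimension $n$, and Corollary~\ref{cor:LipToLMR} yields the upper bound $V^{(k+1)/k}$ on the $k$th-order Dehn function for $k<n$. For the matching lower bound, I would choose geodesics $\gamma_i\subset \Hyp_2$ in each factor whose Busemann slopes $s_i$ satisfy $\sum_i s_i=0$; their product is an isometric Euclidean $n$-flat lying in $H = Z$, and is undistorted in $Z$ because the induced metric agrees with the left-invariant metric on $\Sol_{2n-1}$. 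The Euclidean isoperimetric inequality applied to $(k+1)$-dimensional subflats then gives $\FV^{k+1}_Z(\alpha)\gtrsim (\mass\alpha)^{(k+1)/k}$ for $k\le n-1$, matching the upper bound.
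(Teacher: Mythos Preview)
Your lift--fill--unlift template has a genuine gap at the very first step. You propose to push $f$ ``upward in $A$ \dots\ along a ray in $A$ chosen so that every horocyclic coordinate is compressed,'' but no such ray exists: by definition $A\cong\R^{n-1}$ acts on $\R^n$ by diagonal matrices of determinant $1$, so the exponents satisfy $\sum_i a_i=0$ and every direction in $A$ that contracts some horocyclic coordinate $x_i$ necessarily expands another. (Your metric $\sum_i(da_i^2+e^{-2a_i}dx_i^2)$ is really the metric on the ambient $X=(\Hyp_2)^n$; restricting to the horosphere forces $\sum_i a_i=0$, which is precisely this obstruction.) This is not a matter of Lipschitz bookkeeping at the seams---the compression step cannot be carried out inside $Z$ at all. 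The paper works around this by replacing the single lift with a sequence of $n$ coordinate-wise projections $p_1,\dots,p_n$, where $p_i$ collapses the $i$th $\Hyp_2$-factor onto a fixed vertical geodesic through the basepoint. The key structure is that of \emph{$k$-slices}: intersections of $Z$ with products in which $n-k$ of the factors are vertical geodesics, which for $k<n$ are bilipschitz to the Hadamard space $(\Hyp_2)^k\times\R^{n-k-1}$. Each $p_i$ sends $k$-slices to $k$-slices, with the original and image contained in a common $(k+1)$-slice, so one can interpolate from $f$ to $p_i\circ f$ skeleton by skeleton using only CAT(0) fillings inside these slices (Lemmas~\ref{lemma:sliceExtensions} and~\ref{lem:solSegments}). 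After all $n$ projections the image lies in a $0$-slice $\cong\R^{n-1}$, where one cones off.

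Your lower-bound argument also needs repair: a product $\gamma_1\times\cdots\times\gamma_n$ of geodesics is an $n$-flat in $X$, but its intersection with the horosphere $\{\sum_i\beta_i=0\}$ is at most an $(n-1)$-flat (and that only when each $\gamma_i$ is a vertical geodesic, so that $\beta_i|_{\gamma_i}$ is affine). The maximal flats in $\Sol_{2n-1}$ are exactly the $0$-slices, which are $(n-1)$-dimensional; these give the lower bound $V^{(k+1)/k}$ directly for $k\le n-2$, but the case $k=n-1$ is not covered by the flat argument you describe.
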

This is a higher-dimensional version of a theorem of Gromov
\cite[5.A$_9$]{GroAI} which states that $\Sol_{2n-1}$ has quadratic
Dehn function when $n>1$.  These bounds are sharp; there are
$n$-spheres in $\Sol_{2n-1}$ with volume $V$ but filling volume
exponential in $V$, so the $n$th order Dehn function of $\Sol_{2n-1}$
is exponential \cite{GroAI}.  The same bounds apply to Hilbert modular
groups:
\begin{thm}\label{thm:HilbertModular}
  If $\Gamma\subset \SL_2(\R)^n$ is a Hilbert modular group, then the
  $k$th-order Dehn function of $\Gamma$ is asymptotic to
  $V^{(k+1)/k}$ for $k<n$.
\end{thm}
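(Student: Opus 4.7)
The plan is to deduce Theorem~\ref{thm:HilbertModular} from Theorem~\ref{thm:LipToLMR} applied to an appropriate subset $Z \subset X := (\Hyp_2)^n$. By standard reduction theory for Hilbert modular groups, the cusps of $\Gamma$ correspond to a $\Gamma$-orbit of product ideal points of $X$, and one can choose a $\Gamma$-invariant disjoint family of open horoballs $X_p \subset X$ at these cusps so that $Z := X \smallsetminus \bigsqcup_p X_p$ lies at bounded Hausdorff distance from $\Gamma$ and carries a proper cocompact $\Gamma$-action. A direct computation in horocyclic coordinates identifies each boundary horosphere $H_p = \partial X_p$, with its induced metric, with the solvable group $\Sol_{2n-1}$ of Theorem~\ref{thm:Sol}: writing the hyperbolic metric on the $i$-th factor of $X$ as $db_i^2 + e^{-2b_i}\,dx_i^2$, the slice $\{\sum_i b_i = 0\}$ inherits exactly the left-invariant Riemannian metric on $\R^{n-1} \ltimes \R^n$ appearing there.

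Next I would verify the hypotheses of the second bullet of Theorem~\ref{thm:LipToLMR} with Lipschitz-connectivity exponent $n-1$. The ambient space $X$ is CAT(0), hence Lipschitz $k$-connected for every $k$ via geodesic coning, and as a product of homogeneous Hadamard manifolds it has finite Assouad-Nagata dimension. Each $H_p$ is isometric to $\Sol_{2n-1}$, which is Lipschitz $(n-1)$-connected by Theorem~\ref{thm:Sol}, so the $H_p$ are uniformly Lipschitz $(n-1)$-connected with a common constant. Theorem~\ref{thm:LipToLMR} then gives that $Z$ is undistorted in $X$ up to dimension $n$, and Corollary~\ref{cor:LipToLMR} supplies the upper bound: the $k$-th order Dehn function of $Z$ is at most $V^{(k+1)/k}$ for every $k<n$. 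Since $\Gamma$ acts properly and cocompactly on $Z$, the same bound transfers to $\Gamma$ by quasi-isometry invariance of higher-order Dehn functions.

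For the matching lower bound, consider the $(n-1)$-dimensional flat subspaces $F_p \subset H_p$ obtained by intersecting a rank-$n$ flat of $X$ directed toward a cusp with the corresponding horosphere; each $F_p$ lies in $Z$. For $k \le n-2$, spheres of dimension $k$ inside such an $F_p$ realize the lower bound $V^{(k+1)/k}$ via the remark following Corollary~\ref{cor:LipToLMR}, since $F_p$ is a rank-$(n-1)$ flat of $X$ contained in $Z$. I expect the top-dimensional case $k = n-1$ to be the main obstacle, because no rank-$n$ flat of $X$ can lie in $Z$: each eventually enters some horoball. To handle it, take an $(n-1)$-sphere $\alpha \subset F_p$ of volume $V$ and observe that any $n$-chain $\beta \subset Z$ with $\partial\beta = \alpha$, viewed inside $X$ and pushed by the CAT(0) nearest-point projection onto the ambient rank-$n$ flat $F \supset F_p$, yields a filling $\beta' \subset F \cong \R^n$ of $\alpha$; the Euclidean isoperimetric inequality forces $\mass\beta' \gtrsim V^{n/(n-1)}$, and non-expansion of the projection gives $\mass\beta \gtrsim V^{n/(n-1)}$, completing the lower bound.
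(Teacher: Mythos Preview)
Your upper-bound argument is essentially the paper's own: Section~\ref{sec:sol} records that the horosphere boundaries in the reduction-theoretic decomposition of $X=(\Hyp_2)^n$ for a Hilbert modular group are bilipschitz to $\Sol_{2n-1}$, and then invokes the Lipschitz connectivity of $\Sol_{2n-1}$ (Theorem~\ref{thm:mainSol}) together with Theorem~\ref{thm:LipToLMR} and Corollary~\ref{cor:LipToLMR}, exactly as you propose.

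For the lower bound you are more explicit than the paper (which simply gestures at the remark following Corollary~\ref{cor:LipToLMR}), and your $k\le n-2$ argument via the flats $F_p\subset H_p$ is fine. The $k=n-1$ step, however, has a dimensional error. You place an $(n-1)$-sphere $\alpha$ of positive mass inside the $(n-1)$-dimensional flat $F_p$, but there are no nontrivial Lipschitz $(n-1)$-cycles supported in $\R^{n-1}$: by the constancy theorem any top-dimensional cycle in $\R^{n-1}$ is a constant multiple of Lebesgue measure, hence zero if it has bounded support. So no such $\alpha$ exists in $F_p$. Your projection-to-$F$ idea is sound; the fix is simply to put $\alpha$ somewhere $n$-dimensional inside $Z$. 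The cleanest route is to observe that $Z$ contains a full rank-$n$ flat of $X$ --- a $\Gamma$-periodic maximal flat stays in the thick part --- and then take $\alpha$ to be a round $(n-1)$-sphere in that flat; nearest-point projection $X\to F$ and the Euclidean isoperimetric inequality in $F\cong\R^n$ give $\FV^n_Z(\alpha)\gtrsim V^{n/(n-1)}$ exactly as you intended, and this is precisely the mechanism of the remark after Corollary~\ref{cor:LipToLMR}.
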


We will also apply the methods of Theorem~\ref{thm:LipToLMR} to
horospheres in euclidean buildings and to the $S$-arithmetic groups
considered in \cite{BuxWortConnectivity}.  

Let $X$ be a thick euclidean building and $E\subset X$ be an
apartment.  Then the vertices of $E$ form a lattice, and if
$r:[0,\infty)\to E$ is a geodesic ray, we say that $r$ has
\emph{rational slope} if it is parallel to a line segment connecting
two vertices of $E$.  This condition is independent of the choice of
$E$, so if $r:[0,\infty)\to X$ is a geodesic ray, we say it has
rational slope if it has rational slope considered as a ray in some
apartment $E$.  The boundary at infinity of $X$ consists of
equivalence classes of geodesic rays, so if $\tau\in X_\infty$ is a
point in the boundary at infinity of $X$, we say it has rational slope
if one of the rays asymptotic to $\tau$ has rational slope.  In
particular, if the isometry group of $X$ acts cocompactly on a
horosphere centered at $\tau$, then $\tau$ has rational slope.

\begin{thm}\label{thm:mainBuildings}
  Let $X$ be a thick euclidean building and let $\tau\in X_\infty$ be
  a point in its boundary at infinity which has rational slope and is
  not contained in a factor of rank less than $n$ (in particular, $X$
  has rank at least $n$).  Let $Z$ be a horosphere in $X$ centered at
  $\tau$.  Then $Z$ is Lipschitz $(n-2)$-connected, undistorted in $X$
  up to dimension $n-1$, and its $k$th-order Dehn function grows at
  most as fast as $V^{(k+1)/k}$ for $k\le n-2$.
\end{thm}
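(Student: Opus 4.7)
The plan is to reduce the three conclusions to the single statement that $Z$ is Lipschitz $(n-2)$-connected. Since $X$ is CAT(0) and thick euclidean buildings have finite Assouad-Nagata dimension (\cite{LangSch}), the first bullet of Theorem~\ref{thm:LipToLMR} then gives undistortedness of $Z$ in $X$ up to dimension $n-1$, and Corollary~\ref{cor:LipToLMR} gives the $V^{(k+1)/k}$ bound on the $k$th-order Dehn function for $k \le n-2$. So the entire content of the theorem lies in the Lipschitz connectivity statement.

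To prove $Z$ is Lipschitz $(n-2)$-connected, I would exploit the apartments of $X$ whose ideal boundary contains $\tau$. Fix a Busemann function $h$ based at $\tau$, so $Z = h^{-1}(0)$. On any such apartment $E$, the function $h|_E$ is affine, so $E \cap Z$ is an $(n-1)$-dimensional affine hyperplane, and these hyperplanes cover $Z$. The rational slope hypothesis guarantees that $\tau$ is stabilized by a cocompact subgroup of the affine Weyl group, so $Z$ inherits a uniformly locally finite polyhedral cell structure in which each top-dimensional cell lies inside some $E \cap Z$. Given an $\ell$-Lipschitz map $f : S^d \to Z$ with $d \le n-2$, I would approximate $f$ at scale $\ell$ by a simplicial map into this combinatorial model and then extend cell by cell: within any single apartment $E$, the intersection $E \cap Z$ is contractible, so there is no local obstruction to extending over a cell of dimension $\le n-1$.

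The main obstacle is the consistency of the cellwise extensions across apartments. On an overlap $E \cap E'$ of two apartments through $\tau$, horocyclic projections onto $E \cap Z$ and $E' \cap Z$ typically disagree, and one must interpolate through a third apartment, or, combinatorially, inside the link of a cell of $Z$. This is controlled by a Solomon--Tits type connectivity statement for the spherical link at $\tau$ in the ambient building: the hypothesis that $\tau$ is not contained in a factor of rank less than $n$ forces this link to be $(n-2)$-connected, which is exactly the connectivity required to solve the extension problem in dimensions up to $n-1$. The induction parallels the one used for $\Sol_{2n-1}$ in Theorem~\ref{thm:Sol}, with apartments through $\tau$ playing the role of the $\R^n$-fibers of $\Sol$, and the rank-$n$ non-degeneracy of $\tau$ playing the role of the non-degeneracy of the diagonal subgroup.

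Once the combinatorial extension is constructed, a standard smoothing argument using the uniformly locally finite polyhedral structure produces a Lipschitz map $\bar f : D^{d+1} \to Z$ whose Lipschitz constant depends only on $X$ and $\tau$, giving Lipschitz $(n-2)$-connectivity of $Z$ and completing the proof. The hardest step to make precise is the gluing/interpolation across apartments; everything else is a routine assembly of the coning construction in $X$, the cover of $Z$ by affine hyperplanes, and the reduction lemmas of Section~1.
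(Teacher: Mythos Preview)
Your first paragraph is correct and matches the paper exactly: once $Z$ is Lipschitz $(n-2)$-connected, Theorem~\ref{thm:LipToLMR} and Corollary~\ref{cor:LipToLMR} yield the remaining conclusions.

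The rest of the proposal has a genuine gap. The plan ``approximate simplicially, extend cell by cell inside apartment slices $E\cap Z$, glue using connectivity of the link at $\tau$'' is essentially the Bux--Wortman/Schulz combinatorial Morse-theory argument, and the paper explicitly explains why that argument does \emph{not} produce Lipschitz bounds: pushing a filling down through the horoball via descending links can blow the filling up exponentially. You never say what replaces this step, so the quantitative control is missing. Moreover, the specific connectivity claim you invoke is not right as stated. The ``spherical link at $\tau$'' in $X_\infty$ depends entirely on which closed cell of $X_\infty$ contains $\tau$; if $\tau$ lies in the interior of a chamber (the generic rational-slope case), that link is trivial, so it cannot be the object supplying $(n-2)$-connectivity. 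The hypothesis that $\tau$ is not contained in a rank-$<n$ factor controls angles between $\tau$ and the boundary of a chamber (this is what makes rays in opposite directions descend at a definite rate), not the homotopy type of any link at $\tau$.

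What the paper does instead is work at infinity rather than in $X$. Fix a chamber $\fa\subset X_\infty$ with $\tau\in\overline{\fa}$ and consider $\xop$, the set of chambers opposite $\fa$. For each $x\in X$ one defines a ``downward link at infinity'' $\dlk(x)\subset\xop$ and a map $i_x:\dlk(x)\to Z$ sending a direction $\sigma$ to the point where the ray from $x$ toward $\sigma$ meets $Z$; the non-factor hypothesis makes $\Lip(i_x)\lesssim h(x)$. The main work is showing that $\xop$ is $(n-2)$-connected (Lemmas~\ref{lem:superOppSpecial}--\ref{lem:xopConnected}), which is an Abramenko-type opposition result proved using thickness and ramifications of apartments, not Solomon--Tits. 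This yields a map $\Omega_\infty:\Delta_Z^{(n-1)}\to\xop$ with controlled image on each simplex, and composing with the $i_{x_\Delta}$ (plus a patching step over an ``exploded'' barycentric subdivision) produces the map $\Omega$ required by Lemma~\ref{lem:LipToInfSimp}. The rational-slope hypothesis is used only once, to handle very small Lipschitz constants via a retraction of a thin collar (Lemma~\ref{lem:NbhdRetraction}). Your sketch contains none of this machinery, and without something playing the role of $\xop$ and the maps $i_x$, there is no mechanism to turn the qualitative descending-link argument into a Lipschitz one.
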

$Z$ is not $(n-1)$-connected, so the bound on $k$ is sharp.  Indeed,
for every $r>0$, there is a map $\alpha:S^{n-1}\to Z$ such that $\alpha$
is not null-homotopic in the $r$-neighborhood of $Z$ (see
Lemma~\ref{lem:nonFiniteness}). 

Note that if $\tau$ does not have rational slope, then $Z$ may be
$(n-2)$-connected and locally Lipschitz $(n-2)$-connected but not
Lipschitz $(n-2)$-connected.  Cells of $X$ may intersect $Z$ in
arbitrarily small sets, and this can lead to arbitrarily small spheres
which have small fillings in $X$ but filling volume $\sim 1$ in $Z$.

Theorem~\ref{thm:mainBuildings} is similar to Theorem~7.7 of
\cite{BuxWortConnectivity}, and gives a higher-order version of
Theorem~1.1 of \cite{DrutuFilling} for buildings and products of
buildings.  (Though note that Theorem~1.1 of \cite{DrutuFilling}
applies to $\R$-buildings as well as discrete buildings.)  

The same methods lead to bounds on the higher-order Dehn functions of $S$-arithmetic groups of $K$-rank 1.
\begin{thm}\label{thm:SArith}
  Let $K$ be a global function field, $\mathbf{G}$ be a
  noncommutative, absolutely almost simple $K$-group of $K$-rank 1,
  let $S$ be a finite set of pairwise inequivalent valuations on $K$,
  and let $X$ be the associated euclidean building.  Then the 
  $k$th-order Dehn function of the $S$-arithmetic group
  $G(\mathcal{O}_S)$ grows at most as fast as $V^{(k+1)/k}$ for $k\le
  \dim X-2$.
\end{thm}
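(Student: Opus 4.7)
The plan is to realize $G(\mathcal{O}_S)$ as a cocompact group of isometries acting on a suitably truncated euclidean building and then to apply Corollary~\ref{cor:LipToLMR} in the second (complement-of-horoballs) form. By the reduction theory of Harder, refined by Bux--Wortman (see \cite{BuxWortConnectivity}), there is a $G(\mathcal{O}_S)$-invariant family $\{H_p\}_{p\in P}$ of pairwise disjoint open horoballs in $X$, centered at the finitely many $G(\mathcal{O}_S)$-orbits of $K$-rational cusps, such that $G(\mathcal{O}_S)$ acts cocompactly on $Z:=X\smallsetminus\bigcup_p H_p$. Since Dehn functions are quasi-isometry invariants and $G(\mathcal{O}_S)$ is quasi-isometric to $Z$, it suffices to bound the $k$th-order Dehn function of $Z$.

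To apply Corollary~\ref{cor:LipToLMR} with $n=\dim X-2$, first note that $X$ is a CAT(0) space with finite Assouad-Nagata dimension (euclidean buildings have $\dimAN(X)<\infty$ by \cite{LangSch}), so the geodesic coning argument makes $X$ Lipschitz $k$-connected for every $k$. The connected components $X_p$ of $X\smallsetminus Z$ are precisely the horoballs $H_p$, and the sets $\partial X_p$ are the horospheres $\partial H_p$ centered at the $K$-rational cusps $\tau_p\in X_\infty$. It therefore remains to check that each $\partial H_p$ is Lipschitz $(\dim X -2)$-connected, with constants independent of $p$.

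The Lipschitz connectivity of individual horospheres follows from Theorem~\ref{thm:mainBuildings} applied with $n=\dim X$; the two hypotheses of that theorem are verified as follows. First, each $\tau_p$ has rational slope because the associated parabolic is defined over $K$ and hence stabilises rational flats in $X$. Second, since $\mathbf{G}$ is absolutely almost simple and $K$-isotropic, the $K$-parabolic at $\tau_p$ restricts to a proper parabolic of $\mathbf{G}(K_v)$ for every $v\in S$, so the projection of $\tau_p$ to each irreducible factor $X_v$ is a nontrivial boundary point; hence $\tau_p$ is not contained in any proper factor of $X$, which is exactly the rank condition needed. Uniformity of the Lipschitz connectivity constant across the family $\{\partial H_p\}$ follows from the fact that there are only finitely many $G(\mathcal{O}_S)$-orbits of cusps and $G(\mathcal{O}_S)$ acts by isometries; the constant obtained for one representative of each orbit propagates.

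With both hypotheses verified, Corollary~\ref{cor:LipToLMR} yields that the $k$th-order Dehn function of $Z$ grows at most as $V^{(k+1)/k}$ for $k\le \dim X-2$, and transporting this bound along the quasi-isometry $Z\simeq G(\mathcal{O}_S)$ completes the proof. The main technical obstacle is checking that the rank/irreducibility condition of Theorem~\ref{thm:mainBuildings} holds at every $K$-rational cusp: this requires ensuring that the parabolic $P_{\tau_p}$ interacts nontrivially with each local factor, which is where absolute almost simplicity of $\mathbf{G}$ (rather than mere $K$-simplicity) is used. Once this input from the structure theory of reductive groups over local fields is in place, the argument is essentially a mechanical invocation of Corollary~\ref{cor:LipToLMR}.
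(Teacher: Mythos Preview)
Your proposal is correct and follows essentially the same route as the paper: invoke the Bux--Wortman reduction theory to realize $G(\mathcal{O}_S)$ as acting cocompactly on $X$ minus a family of disjoint horoballs, verify that the bounding horospheres are uniformly Lipschitz $(\dim X-2)$-connected, and apply Theorem~\ref{thm:LipToLMR}/Corollary~\ref{cor:LipToLMR}. The only minor difference is in the uniformity step: you argue via finitely many $G(\mathcal{O}_S)$-orbits of cusps, whereas the paper appeals directly to Theorem~\ref{thm:horosphereLipConn}, whose Lipschitz-connectivity constant depends only on $X$ and the image $p(\tau)$ of $\tau$ in the model chamber, which automatically gives the uniform bound without invoking the orbit structure.
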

This improves results of Bux and Wortman, who showed that
$G(\mathcal{O}_S)$ is of type $F_{\dim X-1}$ but not of type $F_{\dim
  X}$ \cite{BuxWortConnectivity, BuxWortFiniteness}.  Bux and Wortman
showed that horospheres in $X$ are $(\dim X-2)$-connected;
Theorem~\ref{thm:SArith} gives a quantitative proof of this fact.

Some possible other applications of Theorem~\ref{thm:LipToLMR} include
the study of higher-order fillings in, for instance, metabelian
groups, as in \cite{DeCorTess}, lattices of $\Q$-rank 1 in semisimple
groups, as in \cite{DrutuFilling}, and $S$-arithmetic lattices when
$|S|$ is large, as in \cite{BestEskWort}.

\noindent \emph{Notational conventions}: If $f$ and $g$ are
expressions, we will write $f\lesssim g$ if there is some constant $c$
such that $f\le c g$.  We write $f\sim g$ if there is some constant
$c$ such that $c^{-1}\le f\le c g$.  When we wish to emphasize that
$c$ depends on $x$ and $y$, we write $f \lesssim_{x,y} g$ or
$f\sim_{x,y} g$.  We give $S^k$ the round metric, scaled so that
$\diam S^k=1$, and we define the \emph{standard $k$-simplex} to be the
equilateral euclidean $k$-simplex, scaled to have diameter 1.

\noindent \emph{Acknowledgements}: This work was supported by a
Discovery Grant from the Natural Sciences and Engineering Research
Council of Canada and by the Connaught Fund, University of Toronto.
The author would like to thank MSRI and the organizers of the 2011
Quantitative Geometry program for their hospitality, and would like to
thank Cornelia Dru\c{t}u, Enrico Leuzinger, Romain Tessera, and Kevin
Wortman for helpful discussions and suggestions.

\section{Building fillings from simplices}

The proof of Theorem~\ref{thm:LipToLMR} is based on the proof of a
theorem of Lang and Schlichenmaier.  Lang and Schlichenmaier proved:
\begin{theorem}\label{thm:LangSch}
  Suppose that $Z\subset X$ is a nonempty closed set and that $\dimAN{X}\le m<\infty$.  If $Y$
  is Lipschitz $(m-1)$-connected, then there is a $C$ such that any
  Lipschitz map $f:Z\to Y$ extends to a map $\bar{f}:X\to Y$ with
  $\Lip(\bar{f})\le C\Lip(f)$.
\end{theorem}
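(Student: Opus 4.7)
The plan is to mimic standard constructions of Lipschitz extensions: build a Whitney-style simplicial complex $K$ that records the local geometry of $X \smallsetminus Z$ at scales comparable to distance from $Z$, use $\dimAN(X) \le m$ to force $\dim K \le m$, and then extend $f$ across $K$ skeleton by skeleton using the Lipschitz $(m-1)$-connectivity of $Y$. The final extension $\bar f$ will agree with $f$ on $Z$ and will factor through $K$ off of $Z$.

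First I would invoke the definition of Assouad-Nagata dimension to obtain, for every scale $s > 0$, a cover of $X$ with multiplicity at most $m+1$, Lebesgue number at least $s/c_0$, and diameters at most $c_0 s$, where $c_0$ depends only on $X$. Applying this at dyadic scales $s_j = 2^j$ on annuli $A_j = \{x : 2^j \le d(x,Z) < 2^{j+1}\}$ and gluing produces a locally finite cover $\mathcal U$ of $X \smallsetminus Z$ whose nerve $K$ has dimension at most $m$ and satisfies the Whitney property $\diam(U) \sim d(U,Z)$ for $U \in \mathcal U$. A Lipschitz partition of unity subordinate to $\mathcal U$ gives a map $\Phi : X \smallsetminus Z \to K$ whose Lipschitz constant on $A_j$ is $O(2^{-j})$, and I will set $\bar f = g \circ \Phi$ outside $Z$ for a suitable $g : K \to Y$.

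To define $g$: on the $0$-skeleton, for each vertex $v$ corresponding to $U \in \mathcal U$, pick a nearest point $y_v \in Z$ to $U$ and set $g(v) = f(y_v)$. For any simplex $\sigma = [v_0,\dots,v_k]$ with $k \le m$, the Whitney property forces $d(y_{v_i}, y_{v_j}) \lesssim \mathrm{scale}(\sigma)$, so the values $\{f(y_{v_i})\}$ have diameter $\lesssim \Lip(f)\cdot\mathrm{scale}(\sigma)$ in $Y$. Inductively, assume $g$ has been defined on the $(k-1)$-skeleton so that on each $(k-1)$-simplex $\tau$ it is $O(\Lip(f))$-Lipschitz after rescaling $\tau$ to unit diameter. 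Since $k - 1 \le m - 1$, the Lipschitz $(k-1)$-connectivity of $Y$ produces a controlled extension across each $k$-simplex, completing the induction up to $\dim K \le m$.

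The main obstacle, and the heart of the proof, is verifying that the glued map $\bar f$ is globally $C\Lip(f)$-Lipschitz for a $C$ depending only on $X$ and $m$. The case of two points both in $Z$ is immediate, and for two points in a common annulus $A_j$ one composes the Lipschitz bound on $\Phi|_{A_j}$ with the scale-$2^j$ Lipschitz bound on $g$. The delicate cases are two points in different annuli and, especially, one point $z \in Z$ with another point $x$ near $Z$: here one must telescope across dyadic scales, using that for each vertex $v$ contributing weight to $\Phi(x)$, the nearest point $y_v$ satisfies $d(y_v, z) \lesssim d(x,Z) + d(x,z)$, so $d(f(y_v), f(z)) \lesssim \Lip(f)\,(d(x,Z) + d(x,z))$. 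The uniform bound $m+1$ on the multiplicity of the covers at each scale, together with the geometric decay of scales, keeps the telescoped sum bounded by a constant multiple of $\Lip(f)\cdot d(x,z)$, which is exactly where the finite Assouad-Nagata dimension is used quantitatively.
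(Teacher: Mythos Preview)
The paper does not give its own proof of this theorem; it is quoted from Lang--Schlichenmaier \cite{LangSch}. What the paper does record is the key construction from that reference: a single cover $(B_i)_{i\in I_0}$ of $X\setminus Z$ with $\diam B_i\le a\,d(B_i,Z)$ and the property that any set $D$ with $\diam D\le b\,d(D,Z)$ meets at most $\dimAN(X)+1$ of the $B_i$; bump functions $\sigma_i(x)=\max\{0,\delta d(B_i,Z)-d(x,B_i)\}$ whose supports have multiplicity exactly $\dimAN(X)+1$; and the resulting Lipschitz map to the nerve, followed by a skeleton-by-skeleton extension into $Y$. Your overall architecture---Whitney-type nerve, nearest-point assignment on vertices, inductive extension using Lipschitz $(m-1)$-connectivity, and a final telescoping Lipschitz check at the boundary---matches this exactly.

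The one genuine soft spot in your outline is the sentence ``Applying this at dyadic scales $s_j=2^j$ on annuli $A_j$ and gluing produces a cover whose nerve $K$ has dimension at most $m$.'' A naive union of scale-$2^j$ covers restricted to the annuli $A_j$ will have, near the boundary circles $\{d(\cdot,Z)=2^j\}$, overlap coming from two or three adjacent scales, so the multiplicity can jump to a constant multiple of $m+1$ rather than $m+1$ itself. That would give $\dim K\le C m$ for some $C>1$, and your skeleton-by-skeleton extension would then need $Y$ to be Lipschitz $(Cm-1)$-connected, strictly more than the hypothesis. The Lang--Schlichenmaier construction sidesteps this by building one global cover rather than gluing dyadic layers: the point is that each $\supp\sigma_i$ itself satisfies $\diam(\supp\sigma_i)\le b\,d(\supp\sigma_i,Z)$, so property~(2) of the cover applies directly to give multiplicity exactly $m+1$. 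If you prefer a dyadic approach you can recover the sharp bound, but it requires an additional ingredient (for instance the $(m+1)$-coloring reformulation of Assouad--Nagata dimension) that your sketch does not mention.
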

Here, $\dimAN(X)$ is the Assouad-Nagata dimension of $X$.  The
\emph{Assouad-Nagata dimension} of $X$ is the smallest integer such
that there is a $c$ such that for all $s>0$, there is a covering
$\mathcal{B}_s$ of $X$ by sets of diameter at most $cs$ (a
\emph{$cs$-bounded covering}) such that any set with diameter $\le s$
intersects at most $n+1$ sets in the cover (i.e., $\mathcal{B}_s$ has
\emph{$s$-multiplicity} at most $n+1$).

One consequence of Theorem~\ref{thm:LangSch} is that if $Z$ is
Lipschitz $n$-connected for $n=\dimAN(X)$, then the identity map $Z\to
Z$ can be extended to a Lipschitz map $\bar{f}:X\to Z$ and $Z$ is a
Lipschitz retract of $X$.  Consequently, if $\alpha$ is a
$(k-1)$-cycle in $Z$ and $\beta$ is a chain in $X$ with boundary
$\alpha$, then $\bar{f}_\sharp(\beta)$ is a chain in $Z$ with boundary
$\alpha$, so
$$\FV^{k}_Z(\alpha)\le C^k \FV_X^k(\alpha),$$
and $Z$ is undistorted in $X$ up to dimension $n$.
Theorem~\ref{thm:LipToLMR} claims that the same is true under the
weaker condition that $X$ has finite Assouad-Nagata dimension.  

Before we sketch the proof of Theorem~\ref{thm:LipToLMR}, we need the
notion of a quasi-conformal complex.  We define a \emph{riemannian
  simplicial complex} to be a simplicial complex with a metric which
gives each simplex the structure of a riemannian manifold with
corners.  We say that such a complex is quasi-conformal (or that the
complex is a \emph{QC complex}) if there is a $c$ such that the
riemannian metric on each simplex is $c$-bilipschitz equivalent to a
scaling of the standard simplex.

QC complexes are a compromise between the rigidity of simplicial
complexes and the freedom of riemannian simplicial complexes.  A key
feature of simplicial complexes is that curves and cycles can be
approximated by simplicial curves and cycles.  This is not true in
riemannian simplicial complexes, but it holds in QC complexes.

Specifically, a version of the Federer-Fleming deformation theorem
holds in QC complexes.  Recall that the Federer-Fleming theorem for
simplicial complexes states that any Lipschitz cycle $a$ in a
simplicial complex can be approximated by a simplicial cycle $P(a)$
whose mass is comparable to the mass of $a$.  We will use the
following variation of the Federer-Fleming theorem:
\begin{theorem}\label{thm:FedFlemSimp}
  Let $\Sigma$ be a finite-dimensional scaled simplicial complex, that
  is, a simplicial complex where each simplex is given the metric of
  the standard simplex of diameter $s$.  There is a constant $c$
  depending on $\dim \Sigma$ such that if $a\in \CLip_k(\Sigma)$ is a
  Lipschitz $k$-cycle, then there are $P(a)\in \Ccell_k(X)$ and
  $Q(a)\in \CLip_{k+1}(X)$ such that
  \begin{enumerate}
  \item $\partial a=\partial P(a)$
  \item $\partial Q(a) = a - P(a)$
  \item $\mass P(a)\le c \cdot\mass(a)$
  \item $\mass Q(a)\le c s \cdot\mass(a)$
  \end{enumerate}
\end{theorem}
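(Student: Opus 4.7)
The plan is to adapt the classical Federer-Fleming deformation argument to the scaled setting, tracking how the simplex size $s$ enters the mass bounds. The core idea is to push $a$ onto the $k$-skeleton of $\Sigma$ by radially projecting, inside each simplex of dimension greater than $k$, from a carefully chosen interior point; the homotopies traced out by these projections assemble into $Q(a)$, and the cycle that remains after all the projections becomes $P(a)$.

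First I would establish a single-simplex lemma. Let $\sigma$ be a $d$-simplex of $\Sigma$ with $d>k$, and let $a_\sigma$ be a Lipschitz $k$-chain supported in $\sigma$. For each $p\in\interior(\sigma)$, let $\pi_p:\sigma\smallsetminus\{p\}\to\partial\sigma$ be radial projection from $p$. After rescaling $\sigma$ to a standard simplex of unit diameter, the usual Fubini/coarea computation produces a point $p=p_\sigma$ with $\mass(\pi_{p\#}a_\sigma)\le C_d\mass(a_\sigma)$, where $C_d$ depends only on $d$. Rescaling back, the straight-line homotopy from $\mathrm{id}$ to $\pi_p$ gives a prism chain $q_\sigma$ with $\partial q_\sigma=a_\sigma-\pi_{p\#}a_\sigma$ (modulo contributions from $\partial a_\sigma$, which vanish for cycles), and the mass of $q_\sigma$ picks up one extra factor of the diameter, yielding $\mass q_\sigma\le C_d\cdot s\cdot\mass(a_\sigma)$.

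Next I would iterate this construction from $d=\dim\Sigma$ down to $d=k+1$. At each round the open $d$-simplices are pairwise disjoint, so the single-simplex lemma applies independently on each one, producing a new chain supported on the $(d-1)$-skeleton together with a prism chain of controlled mass. Summing the prism chains across rounds gives $Q(a)$, and telescoping the mass estimates yields $\mass Q(a)\le cs\cdot\mass(a)$; the chain $P_0(a)$ remaining after the final round lives on $\Sigma^{(k)}$ and has mass at most $c\cdot\mass(a)$, for a constant $c$ depending only on $\dim\Sigma$.

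To finish, I would observe that any Lipschitz $k$-cycle supported on $\Sigma^{(k)}$ is automatically a cellular chain: restricted to the interior of each $k$-simplex $\tau$ it is a relative cycle (its boundary would otherwise concentrate on $\Sigma^{(k-1)}$, a set of $k$-dimensional measure zero), hence equals a real multiple of $\tau$ determined by a degree computation, and reassembly yields the cellular $P(a)$ with the same mass as $P_0(a)$ and the required boundary relations. The main technical hurdle is the Fubini argument inside the single-simplex lemma: one must ensure that $C_d$ depends only on $d$ and not on the ambient complex or on $s$, so that the scale $s$ appears cleanly only as a multiplicative factor in $\mass Q(a)$; once this is in hand the iteration and the identification of $P_0(a)$ with a cellular chain are mechanical.
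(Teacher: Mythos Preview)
Your argument is correct and is the standard Federer--Fleming deformation. The paper, however, does not actually give a proof: it cites a reference for the case $s=1$ and then says that ``a simple scaling argument proves the general case.'' What you have written is essentially the content of that cited argument, with the factor $s$ tracked through each step rather than inserted at the end by rescaling the whole complex to unit size; either route yields the same constants. One small correction worth making: in the single-simplex step, the prism contributions coming from $\partial a_\sigma$ vanish not because $a$ is a cycle --- the restriction $a_\sigma$ to a single top-simplex need not be a cycle even when $a$ is --- but because the straight-line homotopy from the identity to $\pi_p$ is constant on $\partial\sigma$ (radial projection fixes $\partial\sigma$ pointwise). This is also what makes the projections in neighbouring simplices agree along shared faces, so that the pushed chain is globally well defined after each round.
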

A proof of this theorem when $s=1$ can be found in \cite{ECHLPT}.  A
simple scaling argument proves the general case.  Note that, while the
bound on $\mass Q(a)$ depends on the size of the simplices, the bound
on $\mass P(a)$ does not.

Because the bound
on $\mass P(a)$ is independent of the size of the simplices in the
complex, the following version of
Theorem~\ref{thm:FedFlemSimp} holds for a QC complex:
\begin{theorem}\label{thm:FedFlemConf}
  Let $\Sigma$ be a QC complex.  There is a constant $c$
  depending on $\dim \Sigma$ such that if $a\in \CLip_k(\Sigma)$ is a
  Lipschitz $k$-cycle, then there is a $P(a)\in \Ccell_k(X)$ such that
  $\partial a=\partial P(a)$ and $\mass P(a)\le c \cdot\mass(a)$.
\end{theorem}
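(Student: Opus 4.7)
The plan is to replay the proof of Theorem~\ref{thm:FedFlemSimp} on the QC complex $\Sigma$, working one simplex at a time through the bilipschitz charts that define the QC structure. The crucial observation, already highlighted in the paragraph preceding the statement, is that the mass bound $\mass P(a)\le c\cdot\mass(a)$ is \emph{scale-invariant}: the scale parameter $s$ does not appear on the right. So even though the scales $s_\sigma$ of the standard simplices modelling different simplices of $\Sigma$ may vary wildly, the bound survives.

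Concretely, let $c_0$ be the bilipschitz constant from the definition of a QC complex and, for each simplex $\sigma$ of $\Sigma$, fix a $c_0$-bilipschitz homeomorphism $\phi_\sigma\colon\sigma\to\Delta_\sigma$ onto a standard simplex of some scale $s_\sigma$. I would deform $a$ into successively lower skeleta, starting at the top dimension of $\Sigma$ and stopping once the chain is supported on the $k$-skeleton, where it is automatically cellular. For the inductive step at dimension $d>k$, I would handle each $d$-simplex $\sigma$ separately: push $a|_\sigma$ forward by $\phi_\sigma$, apply the radial-projection construction of Theorem~\ref{thm:FedFlemSimp} inside $\Delta_\sigma$ to obtain a chain on $\partial\Delta_\sigma$ whose mass is at most $c_1\cdot\mass((\phi_\sigma)_\sharp(a|_\sigma))\le c_1 c_0^k\cdot\mass(a|_\sigma)$ with $c_1$ depending only on $d$, and then pull back by $\phi_\sigma^{-1}$, losing at most another factor $c_0^k$. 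Because radial projection fixes the boundary of $\Delta_\sigma$ pointwise, the local deformations on adjacent $d$-simplices agree on their common $(d-1)$-face and glue to a global deformation supported on the $(d-1)$-skeleton with the same boundary as $a$. Iterating $\dim\Sigma-k$ times produces $P(a)\in\Ccell_k(\Sigma)$ with $\partial P(a)=\partial a$ and $\mass P(a)\le c\cdot\mass(a)$, where $c$ depends only on $\dim\Sigma$ and $c_0$.

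I do not expect a genuine obstacle, only bookkeeping. The one point worth flagging is why Theorem~\ref{thm:FedFlemConf} asserts only the existence of $P(a)$ and not a companion chain $Q(a)$ with $\partial Q(a)=a-P(a)$: the mass bound $\mass Q(a)\le cs\cdot\mass(a)$ from Theorem~\ref{thm:FedFlemSimp} is scale-\emph{dependent}, so assembling the local deformation chains over all simplices of $\Sigma$ would yield a bound involving $\sup_\sigma s_\sigma$, which is typically infinite for a QC complex. This asymmetry between the $P$-bound and the $Q$-bound is exactly what the theorem is designed to exploit, and it is the reason the QC-complex statement is strictly weaker than its scaled counterpart but still strong enough to approximate arbitrary Lipschitz cycles by cellular ones.
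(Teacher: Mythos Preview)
Your proposal is correct and is essentially a detailed expansion of the paper's own justification. The paper does not give a proof of Theorem~\ref{thm:FedFlemConf} at all; it simply remarks that the bound $\mass P(a)\le c\cdot\mass(a)$ in Theorem~\ref{thm:FedFlemSimp} is independent of the scale $s$, and infers the QC version from this. Your simplex-by-simplex argument through the bilipschitz charts, with the observation that radial projection is the identity on $\partial\Delta_\sigma$ (so the local deformations glue), is exactly the natural way to make that inference rigorous, and your explanation of why no companion $Q(a)$ is claimed matches the paper's implicit reasoning.
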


Now we will sketch a proof of Theorem~\ref{thm:LipToLMR}.  Note that
this sketch is incorrect due to some technical issues; we will fix
these issues in the actual proof.  In the proof of Theorem~1.5 of
\cite{LangSch}, Lang and Schlichenmaier show that, if
$\dimAN(X)<\infty$, there are $a>0$, $0<b<1$ and a cover
$\mathcal{B}=(B_i)_{i\in I_0}$ of $X\setminus Z$ by subsets of
$X\setminus Z$ such that:
\begin{enumerate}
\item $\diam B_i\le a d(B_i,Z)$ for every $i\in I_0$
\item every set $D\subset X\setminus Z$ with $\diam D\le b
  d(D,Z)$ meets at most $\dimAN(X)+1$ members of $(B_i)_{i\in I_0}$.
\end{enumerate}
They then define functions $\sigma_i:X\setminus Z\to \R$,
$$\sigma_i(x)=\max\{0,\delta d(B_i,Z)-d(x,B_i)\},$$
where $\delta=b/(2(b+1))$, and show that these have the
property that for any $x$, there are no more than $\dimAN(X)+1$ values
of $i$ for which $\sigma_i(x)>0$.  Using these $\sigma_i$, they
construct a Lipschitz map $g:X\setminus Z\to \Sigma_0$, where
$\Sigma_0$ is the nerve of the supports of the $\sigma_i$.  One can
give $\Sigma_0$ the structure of a QC complex so that if $\Delta$ is a
simplex of $\Sigma_0$ with a vertex corresponding to $\sigma_i$, then
$\diam \Delta \sim \diam \supp \sigma_i$.  Since the diameter of
$\supp \sigma_i$ is proportional to $d(\sigma_i,Z)$, this means that
the parts of $\Sigma_0$ which are close to $Z$ are given a fine
triangulation and the parts of $\Sigma_0$ which are far from $Z$ are
given a coarse triangulation.

Since $Z$ is Lipschitz $n$-connected, one can construct a Lipschitz
map $h:\Sigma_0^{(n+1)}\to Z$, where $\Sigma_0^{(n+1)}$ is the
$(n+1)$-skeleton of $\Sigma_0$.  Then, if $\alpha$ is an $n$-cycle in
$Z$, it has a filling $\beta$ in $X$.  We can use the Federer-Fleming
theorem to approximate $g_\sharp(\beta)$ by some simplicial
$(n+1)$-chain $P(\beta)$ which lies in $\Sigma_0^{(n+1)}$.  The
pushforward of $P(\beta)$ under $h$ will then be a filling of
$\alpha$.

The problem with this argument is twofold.  First, since $g$ is only
defined on $X\setminus Z$, we can't define $g_\sharp(\beta)$ without
extending $g$ to $Z$.  We could define an appropriate metric on the disjoint union
$\Sigma_0\amalg Z$ and a map $X\to \Sigma_0 \amalg Z$, but this is no
longer a simplicial complex.  Second, since the cells of $\Sigma$ get
arbitrarily small close to $Z$, $P(\beta)$ may be an infinite sum of
cells of $\Sigma$.  

We know of two ways to fix this issue.  First, one can make sense of
infinite sums of cells of $\Sigma$ by introducing Lipschitz currents
\cite{AmbrosioKirchheim}.  The set of Lipschitz currents is a
completion of the set of Lipschitz chains, and the $P(\beta)$ defined
above is a current in $\Sigma_0 \amalg Z$.  Its pushforward is then a
filling of $\alpha$.  Second, we can change the construction of
$\Sigma_0$ to avoid the problem.  We take this approach in the rest of
this section.

All the constants and all the implicit constants in $\lesssim$ and
$\sim$ in this section will depend on $X, Z$, and $n$.

First, we construct a QC complex $\Sigma$ which approximates $X$.
This complex will have geometry similar to $\Sigma_0$ on $X\setminus
Z$ and it will have $\epsilon$-small simplices on $Z$.  For $t>0$, let
$N_t(Z)\subset X$ be the $t$-neighborhood of $Z$.
\begin{lemma}
  There are $a, b,\gamma>0$ such that if $\epsilon>0$ and
  $\delta=b/(2(b+1))$, there is a covering $\mathcal{D}$ of $X$ by
  sets $D_k$, $k\in K$ and functions $r:K\to \R$, $\tau_k:X\to \R$
  $$r(k)=\max \{\delta d(D_k,Z), \epsilon \}$$
  $$\tau_k(x)=\max\{0, r(k)-d(x,D_k)\}$$
  such that for any $k\in K$,
  \begin{enumerate}
  \item $\diam D_k \lesssim r(k),$
  \item $d(D_k,Z) \lesssim r(k),$
  \item if $\rho=\epsilon\delta(1+a)$ and $d(D_k,Z)\ge \rho$, then
    $\supp \tau_k$ is contained in a connected component of
    $X\smallsetminus Z$,
  \item the cover of $X$ by the sets $\supp \tau_k$ has multiplicity
    at most $2\dimAN(X)+2$, and 
  \item if $\supp \tau_k\cap \supp \tau_{k'}\ne \emptyset$, then 
    $$\gamma^{-1}r(k')\le  r(k)\le \gamma r(k').$$
  \end{enumerate}
\end{lemma}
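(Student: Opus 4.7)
The plan is to obtain $\mathcal{D}$ by combining two coverings of $X$: the Whitney-type cover of $X\smallsetminus Z$ constructed in the proof of Theorem~1.5 of \cite{LangSch}, which handles the region far from $Z$, and a single scale-$\epsilon$ cover coming from the finiteness of $\dimAN(X)$, which handles the $\rho$-neighborhood of $Z$. The floor $r(k)\ge \epsilon$ on the radius function is the only substantive new ingredient, and it is exactly what will prevent the simplices of the associated QC complex from becoming arbitrarily small near $Z$.

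First, from \cite{LangSch} I extract a covering $(B_i)_{i\in I_0}$ of $X\smallsetminus Z$ by connected sets with $\diam B_i\le a\,d(B_i,Z)$ such that any $D\subset X\smallsetminus Z$ with $\diam D\le b\,d(D,Z)$ meets at most $\dimAN(X)+1$ of the $B_i$; the constants $a$ and $b$ depend only on $\dimAN(X)$, and by enlarging $a$ if necessary I arrange $\delta(1+a)>1$. Using $\dimAN(X)<\infty$, I then pick a covering $(E_j)_{j\in I_2}$ of $N_\rho(Z)$ by connected sets of diameter comparable to $\epsilon$ whose $\epsilon$-multiplicity is at most $\dimAN(X)+1$. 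Setting $K=I_0\sqcup I_2$, I let $D_k$ be $B_k$ or $E_k$ accordingly and define $r(k)$ and $\tau_k$ by the formulas in the statement.

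Properties (1) and (2) are then direct case checks: for $k\in I_0$ one has $\diam D_k\le a\,d(D_k,Z)\le (a/\delta)r(k)$ and $d(D_k,Z)\le r(k)/\delta$; for $k\in I_2$, $\diam D_k\lesssim \epsilon=r(k)$ and $d(D_k,Z)\le \rho\lesssim r(k)$. For property (3), when $d(D_k,Z)\ge \rho$ the distance from $\supp\tau_k$ to $Z$ is at least $d(D_k,Z)-r(k)$, and a short case check (using $\delta<1$ when $r(k)=\delta\,d(D_k,Z)$, and $\delta(1+a)>1$ when $r(k)=\epsilon$) shows this is strictly positive; since $D_k$ was chosen connected, $\supp\tau_k$ is a connected subset of $X\smallsetminus Z$ and hence lies in a single component. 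Property (4) follows by summing the two separate multiplicity bounds: at any point $x\notin Z$ a sufficiently small ball around $x$ meets at most $\dimAN(X)+1$ of the $B_i$ by the Lang-Schlichenmaier bound and at most $\dimAN(X)+1$ of the $E_j$ by the $\epsilon$-multiplicity.

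Property (5) is the main technical point. If $\supp \tau_k\cap\supp \tau_{k'}\neq \emptyset$, pick $x$ in the intersection; then $d(x,D_k)<r(k)$ and $d(x,D_{k'})<r(k')$, so by the triangle inequality together with (1), $|d(D_k,Z)-d(D_{k'},Z)|\lesssim r(k)+r(k')$. Since each $r(\cdot)$ equals either $\epsilon$ or $\delta$ times the corresponding distance to $Z$, this forces $r(k)\sim r(k')$. The principal place where care is needed is the \emph{intermediate regime} where $d(D_k,Z)$ is comparable to $\epsilon/\delta$: here the two subcovers overlap, and one has to verify that the multiplicity does not blow up at the seam and that the radii of $B_i$-type and $E_j$-type sets remain mutually comparable when their supports meet. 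This is essentially bookkeeping with the constants $a,b,\delta,\rho,\epsilon$.
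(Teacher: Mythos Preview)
Your approach is essentially the paper's, but there is a genuine gap in property~(4) caused by one difference: you take $K=I_0\sqcup I_2$, keeping \emph{all} of the Lang--Schlichenmaier sets $B_i$, whereas the paper discards those $B_i$ with $B_i\subset N_\rho(Z)$ and only keeps $I=\{i\in I_0:B_i\not\subset N_\rho(Z)\}$. Without this pruning, the multiplicity of $\{\supp\tau_k\}$ is unbounded near $Z$. Concretely, take $X=\R$, $Z=\{0\}$, and the dyadic cover $B_i=[\lambda^i,\lambda^{i+1}]$ of $(0,\infty)$. For every $i$ with $\lambda^i<\epsilon/\delta$ one has $r(i)=\epsilon$, so $\supp\tau_i\supset[\lambda^i-\epsilon,\lambda^{i+1}+\epsilon]$, and as $i\to-\infty$ these all contain the point $0$ (and indeed any point within $\epsilon/2$ of $Z$). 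Infinitely many supports overlap.

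The error in your argument for (4) is the sentence ``a sufficiently small ball around $x$ meets at most $\dimAN(X)+1$ of the $B_i$.'' This bounds how many $B_i$ meet a ball of diameter $\le b\,d(x,Z)$, but that is not the quantity you need: you must bound how many $\supp\tau_i$ contain $x$, i.e.\ how many $B_i$ lie within $r(i)$ of $x$. In the original Lang--Schlichenmaier setup these coincide because $r(i)=\delta\,d(B_i,Z)$ is itself small near $Z$, but your floor $r(i)\ge\epsilon$ breaks this: when $d(x,Z)\ll\epsilon$, the relevant radius $r(i)=\epsilon$ is enormous compared to $b\,d(x,Z)$, and the Lang--Schlichenmaier multiplicity bound says nothing. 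The fix is exactly the paper's: throw away those $B_i$ contained in $N_\rho(Z)$; for the surviving $i\in I$ one has $d(B_i,Z)\gtrsim\epsilon/\delta$, hence $r(i)=\delta\,d(B_i,Z)$, so $\tau_i=\sigma_i$ and the original multiplicity bound applies. The discarded region is already handled by your second cover $(E_j)$.
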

\begin{proof}
  Let $a>0$, $0<b<1$, and $\mathcal{B}=(B_i)_{i\in I_0}$ be as in the
  Lang-Schlichenmaier construction above.  Let 
  We may assume that each $B_i$ is contained in a connected component
  of $X\smallsetminus Z$.  Let $\rho=\epsilon\delta(1+a)$, and let
  $I\subset I_0$ be the set
  $$I:=\{i\in I_0 \mid B_i \not\subset N_\rho(Z)\}.$$
  Then $\bigcup_{i\in I} B_i\supset X\setminus N_\rho(Z)$.  Since
  $\dimAN(X)\le \infty$, we can let $\mathcal{C}=\{C_j\}_{j\in J}$ be
  a $2 c_0\epsilon$-bounded covering of $N_{\rho}(Z)$ with 
  $2\epsilon$-multiplicity at most $\dimAN(X)+1$, where $c_0$ is the constant in the
  definition of $\dimAN(X)$.  Let $\mathcal{D}=\mathcal{C}\cup
  \{B_i\}_{i\in I}$ and let $K=I\amalg J$.

  Conditions (1) and (2) are easy to check.  For (3), note that if
  $d(D_k,Z)\ge \rho$, then $k\in I$, so $D_i=B_i$ lies in a single
  connected component of $X\setminus Z$, and $\supp \tau_i$ lies in
  the same component.  For (4), note that if $i\in I$, then
  $\tau_i=\sigma_i$, so the cover $\{\supp \tau_i\}_{i\in I}$ has
  multiplicity at most $\dimAN(X)+1$.  Likewise, if $x\in \supp
  \tau_j$ for some $j\in J$, then $C_j\cap B(x,\epsilon)\ne
  \emptyset$, where $ B(x,\epsilon)$ is the closed ball of radius
  $\epsilon$ around $x$.  Since $\mathcal{C}$ has bounded
  $2\epsilon$-multiplicity, this can be true for only $\dimAN(X)+1$
  values of $j$.

  To check (5), suppose that $\supp \tau_k\cap \supp
  \tau_{k'}\ne \emptyset$.  If $r(k')=\epsilon$, then $r(k')\le r(k)$.
  Otherwise, $r(k')=\delta d(D_{k'},Z)$.  But $d(D_k,D_{k'})\lesssim
  r(k)$ and $\diam D_{k}\lesssim r(k)$, so $d(D_{k'},Z)\lesssim r(k)$,
  and $r(k')\lesssim r(k)$.  By symmetry, $r(k)\sim r(k')$.
\end{proof}

Let $\Sigma$ be the nerve of the cover $\{\supp \tau_k\}_{k\in K}$,
with vertex set $\{v_k\}_{k\in K}$ and let $s:\Sigma\to \R$ is the
function such that $s(v_k)=r(k)$ and $s$ is linear on each simplex of
$\Sigma$.  Define a riemannian metric $x_c$ on each simplex of
$\Sigma$ by letting $dx_c^2=s^2\;dx^2$.  If $S=\langle v_{k_1},\dots
v_{k_n}\rangle$ is a simplex of $\Sigma$, then $s$ varies between
$\gamma^{-1}r(k_1)$ and $\gamma r(k_1)$ on $S$, so this metric makes
$\Sigma$ a QC complex.   

\begin{lemma}
  There is a Lipschitz map $g:X\to \Sigma$ with Lipschitz constant
  $c_1$ independent of $\epsilon$.  Furthermore, if $x\in \supp
  \tau_k$ for some $k\in K$, then $g(x)$ is in the star of $v_k$.
\end{lemma}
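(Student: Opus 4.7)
The plan is to build $g$ as the nerve map associated to the partition of unity $\{\tau_k\}$. I would set
$$\phi_k(x) = \frac{\tau_k(x)}{\sum_{k'\in K}\tau_{k'}(x)}, \qquad g(x) = \sum_{k\in K} \phi_k(x)\, v_k,$$
where the latter is interpreted as a formal barycentric combination in $\Sigma$. Since $\mathcal{D}$ covers $X$ and $\tau_k$ equals $r(k)>0$ on $D_k$, the denominator never vanishes, so $g$ is well-defined. The set $\{v_k : \phi_k(x)>0\} = \{v_k : x\in \supp\tau_k\}$ spans a simplex of $\Sigma$ by the definition of the nerve, so $g(x)\in\Sigma$. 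In particular, if $x\in\supp\tau_k$ then $\phi_k(x)>0$, so $g(x)$ lies in the open star of $v_k$, which settles the second claim immediately.

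For the Lipschitz bound, my first step would be to get pointwise two-sided control on the denominator. Given $x\in X$, choose $k_0\in K$ with $x\in D_{k_0}$; then $\tau_{k_0}(x)=r(k_0)$, giving $\sum_{k'}\tau_{k'}(x)\ge r(k_0)$. Condition (5) of the previous lemma forces $r(k)\sim r(k_0)$ for every $k$ with $\tau_k(x)>0$, and condition (4) bounds the number of such indices by $2\dimAN(X)+2$, so $\sum_{k'}\tau_{k'}(x)\sim r(k)$ for any index $k$ active at $x$, and hence $\phi_k(x)\sim \tau_k(x)/r(k)$.

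Next I would estimate the Lipschitz constants of the $\phi_k$. Each $\tau_k$ is $1$-Lipschitz, and at each point only boundedly many of them are nonzero, so the quotient rule combined with the denominator estimate gives
$$|\phi_k(x)-\phi_k(y)|\lesssim d(x,y)/r(k)$$
for $x,y$ close enough that both lie in a common neighborhood. On any simplex $\Delta=\langle v_{k_0},\dots,v_{k_m}\rangle$ of $\Sigma$, the scaling function $s$ is a linear interpolation of the comparable quantities $r(k_i)$, so $s\sim r(k_0)$ on $\Delta$; thus $\Delta$ equipped with the QC metric $dx_c^2=s^2\,dx^2$ is bilipschitz to a standard simplex of diameter $r(k_0)$. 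Composing, the coordinate map $X\to \Delta$ (standard metric) has Lipschitz constant $\lesssim 1/r(k_0)$, while the identity from the standard simplex into $\Sigma$ (QC metric) has Lipschitz constant $\sim r(k_0)$; multiplying gives $\Lip(g)\lesssim 1$ with implicit constant depending only on $a,b,\gamma$, and $\dimAN(X)$, in particular independent of $\epsilon$.

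I expect this cancellation to be the one substantive issue. A priori the partition of unity becomes less and less Lipschitz as we approach $Z$, since $r(k)$ may be as small as $\epsilon$; mapping into the nerve with its standard simplicial metric would yield Lipschitz constant $\sim 1/\epsilon$. The point of the conformal scaling $s^2\,dx^2$ is precisely that it shrinks each simplex of $\Sigma$ at the same rate as the partition of unity concentrates, absorbing the dangerous $1/r(k)$ factor and producing a uniform bound $c_1$.
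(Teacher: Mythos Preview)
Your proposal is correct and follows essentially the same route as the paper: define $g$ as the nerve map of the partition of unity $\{\tau_k\}$, observe that $g(x)$ lies in the star of $v_k$ whenever $\tau_k(x)>0$, and then show the Lipschitz bound by balancing the $1/r(k)$ coming from the quotient rule against the $r(k)$ scaling built into the QC metric on $\Sigma$. The paper carries out the local estimate slightly more explicitly: it uses that $X$ is geodesic to reduce to $d(x,y)<\delta^2\epsilon$, and then checks (by a short case split on whether $d(x,Z)<\rho$) that the minimal simplices containing $g(x)$ and $g(y)$ share a vertex $v_m$, which is exactly what makes your phrase ``for $x,y$ close enough that both lie in a common neighborhood'' precise; you should make that step explicit, since without it one cannot compare the scales $r(k)$ at $x$ and at $y$.
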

\begin{proof}
  Consider the infinite simplex
  $$\Delta_K:=\{p:K\to [0,1]\mid \|p\|_1=1\}$$
  with vertex set $K$, so that $\Sigma$ is a subcomplex of
  $\Delta_K$.  Let
  $$g(x)(k)=\frac{\tau_k(x)}{\bar{\tau}(x)},$$
  where $\bar{\tau}(x)=\sum_{k\in K} \tau_k(x)$.  The image of $g$
  then lies in $\Sigma$, and we can consider $g$ as a function $X\to
  \Sigma$.

  It remains to show that $g$ is Lipschitz with respect to the QC
  metric on $\Sigma$.  Since $X$ is geodesic, it suffices to show that
  if $x,y\in X$ and $d(x,y)<\delta^2 \epsilon<\epsilon,$
  then $d(g(x),g(y))\lesssim d(x,y)$.  Let $S$ and $T$ be the minimal
  simplices of $\Sigma$ which contain $g(x)$ and $g(y)$ respectively.
  First, we claim that $S$ and $T$ share some vertex $v_{m}$.

  Let $\rho=\epsilon\delta(1+a)$ as above.
  If $d(x,Z)<\rho$, then there is some $j\in J$ such that $x\in C_j$
  and $\tau_j(x)=\epsilon$.  Since $\tau_j$ is 1-Lipschitz,
  $\tau_j(y)>0$, so we can let $m=j$.  Otherwise, if $d(x,Z)\ge \rho$,
  then there is some $i\in I$ such that $x\in B_i$.  We have
  $$d(x,Z) \le \diam(B_i)+d(B_i,Z) \le (a+1)d(B_i,Z),$$
  so $\tau_i(x)=\delta d(B_i,Z)\ge \delta^2 \epsilon$, and $\tau_i(y)>0$ as
  desired.  We let $m=i$.

  Since $S$ and $T$ share $v_{m}$, the value of $s$ on $S\cup T$ is
  at most $\gamma r(m)$, and 
  \begin{align*}
    d(g(x),g(y))& \le \gamma r(m) \sum_{k\in
      (S\cup T)^{(0)}}\left|\frac{\tau_k(x)}{\bar\tau(x)}-\frac{\tau_k(y)}{\bar\tau(y)}\right|\\
    & \le \gamma r(m) \sum_{k\in
      (S\cup T)^{(0)}}\left|\frac{\tau_k(x)}{\bar\tau(x)}-\frac{\tau_k(y)}{\bar\tau(x)}\right|
    + \left|\frac{\tau_k(y)}{\bar\tau(x)}-\frac{\tau_k(y)}{\bar\tau(y)}\right| \\
    & \le \gamma r(m) \sum_{k\in
      (S\cup
      T)^{(0)}}\frac{1}{\bar{\tau}(x)}\left(|\tau_k(x)-\tau_k(y)|+\frac{\tau_k(y)}{\bar{\tau}(y)}|\bar{\tau}(x)-\bar{\tau}(y)|\right)\\
    & \le \gamma (2 \dim \Sigma+1)(2\dim \Sigma + 2) \frac{r(m)}{\bar{\tau}(x)}d(x,y)
  \end{align*}
  Furthermore, if $x\in D_{m'}$, then 
  $$\bar{\tau}(x)\ge r(m')\ge \gamma^{-1}r(m),$$
  so $g$ has Lipschitz constant at most 
  $$c_1=\gamma^2 (2 \dim \Sigma+1)(2\dim \Sigma + 2).$$
\end{proof}

Next, we construct a map $h:\Sigma^{(n+1)}\to Z$ on the
$(n+1)$-skeleton of $\Sigma$.  If $\Delta$ is a simplex of $\Sigma$,
denote its vertex set by $\V(\Delta)$.
\begin{lemma}
  For any $\epsilon'>0$, there is a Lipschitz map
  $h^{(0)}:\Sigma^{(0)}\to Z$ with Lipschitz constant independent of
  $\epsilon$ which satisfies:
  \begin{enumerate}
  \item $d(h^{(0)}(v_j),C_j)\lesssim \epsilon$ for every $j\in J$,
  \item if $X_p,p\in P$ are the connected components of
    $X\smallsetminus Z$ and
    $$H_p(\epsilon')=\{x\in X\mid d(x,X_p)\le \epsilon'\} \cap Z,$$
    then for any simplex $\Delta\subset \Sigma$, we either have $\diam
    h^{(0)}(\V(\Delta))\lesssim \epsilon$ (if $\Delta$ has a vertex of
    the form $v_j$ for some $j\in J$) or $h^{(0)}(\V(\Delta))\subset
    H_p(\epsilon')$ for some $p\in P$ (otherwise).
  \end{enumerate}
\end{lemma}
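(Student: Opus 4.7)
The plan is to define $h^{(0)}$ on each vertex by a near-nearest-point projection to $Z$, treating the two vertex types separately. Fix a reference point $b_k\in D_k$ for every $k\in K$. For $j\in J$, simply pick $h^{(0)}(v_j)\in Z$ within $d(b_j,Z)+\epsilon$ of $b_j$; since $C_j\subset N_\rho(Z)$ and $\rho\lesssim \epsilon$, this automatically gives $d(h^{(0)}(v_j),C_j)\lesssim \epsilon$. For $i\in I$, property~(3) guarantees $\supp\tau_i\subset X_p$ for a single component $X_p$ of $X\setminus Z$, and in particular $b_i\in B_i\subset X_p$. Since $\partial X_p\subset Z$, any geodesic from $b_i$ to $Z$ first leaves $X_p$ on $\partial X_p$, so a near-nearest point to $b_i$ in $Z$ can be taken inside $\partial X_p$; choose $h^{(0)}(v_i)$ to be such a point, which then lies in $H_p(\epsilon')$.

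To verify the simplex condition, consider a simplex $\Delta\subset\Sigma$. If every vertex of $\Delta$ is of the form $v_i$ with $i\in I$, the corresponding supports $\supp\tau_i$ all contain a common point (any point of $\Delta$); since each is contained in a single component of $X\setminus Z$, they must all lie in the \emph{same} component $X_p$, and hence $h^{(0)}(\V(\Delta))\subset \partial X_p\subset H_p(\epsilon')$. Otherwise $\Delta$ has a vertex $v_j$ with $j\in J$, so $r(j)=\epsilon$; property~(5) forces $r(k)\sim \epsilon$ for every other vertex, whence $\diam D_k\lesssim \epsilon$ and $d(D_k,Z)\lesssim \epsilon$. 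Because the supports share a common point, all the $b_k$ lie in a ball of radius $\lesssim \epsilon$, and each $h^{(0)}(v_k)$ is within $\lesssim \epsilon$ of $b_k$, so $\diam h^{(0)}(\V(\Delta))\lesssim \epsilon$.

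For the Lipschitz estimate it suffices to check edges. For adjacent vertices $v_k,v_{k'}$, property~(5) makes the QC edge length $\sim r(k)\sim r(k')$. The estimates $d(h^{(0)}(v_k),b_k)\lesssim r(k)$ (from the construction and $d(b_k,Z)\lesssim r(k)$) and $d(b_k,b_{k'})\lesssim r(k)+r(k')$ (from the nonempty intersection of supports and the diameter bound on each $D_k$) combine via the triangle inequality
\[
d(h^{(0)}(v_k),h^{(0)}(v_{k'}))\le d(h^{(0)}(v_k),b_k)+d(b_k,b_{k'})+d(b_{k'},h^{(0)}(v_{k'}))\lesssim r(k),
\]
so the image separation is bounded by the edge length with a constant independent of $\epsilon$. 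The main subtlety is the case split in condition~(2): simplices in the ``far from $Z$'' regime rely on property~(3) to confine them to a single component, while simplices touching the ``near $Z$'' regime use property~(5) to force uniform smallness at scale $\epsilon$. The radius comparability of property~(5) is precisely what patches the two regimes together so that a single uniform Lipschitz constant works across the complex.
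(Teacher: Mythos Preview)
Your proof is correct and follows essentially the same approach as the paper's: define $h^{(0)}$ by near-nearest-point projection to $Z$, verify the Lipschitz bound on edges via the triangle inequality, and split condition~(2) according to whether the simplex has a vertex indexed by $J$. The one place where the paper is more explicit is the claim that ``it suffices to check edges'': the paper justifies this by using the Federer--Fleming deformation to push an arbitrary path in $\Sigma$ into the $1$-skeleton with comparable length, whereas you state the reduction without proof (it is standard for QC complexes, but not entirely trivial).
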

\begin{proof}
  For each vertex $v_k\in \Sigma$, choose a point $z_k\in Z$ such that
  $d(z_k,D_k)< d(Z,D_k)+\epsilon_H/2$, and let $h^{(0)}(v_k)=z_k$.  If
  $k\in J$, then $d(Z,D_k)\lesssim \epsilon$, so $d(z_k,D_k)\lesssim
  \epsilon$ and property (1) holds.
  We claim that this map is Lipschitz.  Suppose that $v,w$ are
  vertices of $\Sigma$.  Then there is a path $\gamma:[0,1]\to \Sigma$
  between them of length $\ell(\gamma)\le 2 d(v,w)$, and the
  Federer-Fleming theorem implies that this can be approximated by a
  path $\gamma':[0,1]\to \Sigma^{(1)}$ in the 1-skeleton of $\Sigma$
  with $\ell(\gamma')\lesssim \ell(\gamma)$.  So, to check that
  $h^{(0)}$ is Lipschitz, it suffices to show that if $v_k$ and
  $v_{k'}$ are connected by an edge $e$, then $d(z_k,z_{k'})\lesssim
  \ell(e)$.

  We may assume that $r(k)\ge r(k')$, so $\ell(e)\ge \gamma^{-1}r(k)$.
  Then we can bound $d(z_k,z_{k'})$ by
  $$d(z_k,z_{k'})\le d(z_k,D_k)+\diam(D_k)+d(D_k,D_{k'})+\diam(D_{k'})+d(D_{k'},z_{k'})$$
  Each term on the right is $\lesssim r(k)$.  For each term except
  $d(D_k,D_{k'})$, this follows from the remarks after the definition
  of $S$. To bound $d(D_k,D_{k'})$, note that since there is an edge
  from $v_k$ to $v_{k'}$, there is a $w\in \supp \tau_k\cap \supp
  \tau_{k'}$.  Then $d(w,D_k)<r(k)$ and $d(w,D_{k'}) < r(k)$, so
  $d(D_k,D_{k'})\le 2r(k)$.  Therefore, $h^{(0)}$ is Lipschitz.

  It remains to check property (2).  Let $\Delta=\langle
  v_{k_0},\dots,v_{k_n}\rangle$ be a simplex of $\Sigma$ and suppose
  that $k_i\in J$ for some $i$.  Then $r(k_i)\lesssim \epsilon$, so
  $\diam \Delta\lesssim \epsilon$, and therefore, $\diam
  h^{(0)}(\V(\Delta))\lesssim \epsilon$.

  Otherwise, $k_i\in I$ for all $i$.  Then there is some $p\in P$ such
  that $\supp \tau_{k_i}\subset X_p$ for all $i$, and
  $h^{(0)}(\V(\Delta))\subset H_p(\epsilon')$.
\end{proof}

If $\epsilon>0$ and $n$ are such that whenever $k\le n$ and
$\tau:S^k\to Z$ is a map with $\Lip \tau \le \epsilon$, there is an
extension $\bar\tau:D^{k+1}\to Z$ with $\Lip \bar\tau \lesssim \Lip
\tau$, we say that $Y$ is \emph{$\epsilon$-locally Lipschitz $n$-connected}.
\begin{lemma}
  If $X$ and $Z$ satisfy the conditions of Theorem~\ref{thm:LipToLMR}
  and $\epsilon$ is sufficiently small, then there is a Lipschitz
  extension $h:\Sigma^{(n+1)}\to Z$ with Lipschitz constant independent of
  $\epsilon$ such that $d(h(g(z)),z)\lesssim \epsilon$ for every $z\in
  Z$.
\end{lemma}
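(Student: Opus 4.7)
The plan is to extend $h^{(0)}$ inductively over the skeleta $\Sigma^{(k)}$ for $k = 1, 2, \dots, n+1$, producing at each stage a Lipschitz map $h^{(k)} \colon \Sigma^{(k)} \to Z$ whose Lipschitz constant is bounded independently of $\epsilon$. For each $k$-simplex $\Delta$, the inductive hypothesis gives a Lipschitz map on $\partial \Delta$ which I extend across $\Delta$ with comparable Lipschitz constant; assembling these extensions yields $h^{(k)}$, and the construction terminates at $k = n+1$. Throughout, I will check that the Lipschitz constant acquired at each step depends only on the previous Lipschitz constant, on $n$, on $\dim \Sigma$, and on the ambient Lipschitz connectivity constants, so that the bound stays uniform in $\epsilon$.

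The choice of extension for each $\Delta$ is dictated by the dichotomy in the previous lemma. If $\Delta$ has a vertex $v_j$ with $j \in J$, then $r(j) \sim \epsilon$, and by property (5) in the construction of $\Sigma$ every vertex of $\Delta$ has $r$-value $\sim \epsilon$; hence $\diam \Delta \sim \epsilon$ in the QC metric and $h^{(k-1)}(\partial \Delta)$ is a Lipschitz sphere in $Z$ of diameter $\lesssim \epsilon$. Otherwise every vertex of $\Delta$ lies in $I$, $h^{(0)}(\V(\Delta)) \subset H_p(\epsilon')$ for some $p \in P$, and by induction $h^{(k-1)}(\partial \Delta)$ also lies in a bounded neighborhood of $H_p$ in $Z$. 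Under hypothesis (1) of Theorem~\ref{thm:LipToLMR}, the Lipschitz $n$-connectivity of $Z$ directly fills either kind of sphere with controlled Lipschitz constant. Under hypothesis (2), a boundary sphere already near $H_p$ is first adjusted (at cost $\lesssim \epsilon'$) to lie in $H_p$ and filled there using the uniform Lipschitz $n$-connectivity of the $H_p$; a small sphere in $Z$ of diameter $\lesssim \epsilon$ is filled first in $X$ via its Lipschitz $n$-connectivity, and any portion of the resulting ball that wanders into a component $X_p$ is replaced by a filling of the corresponding interface sphere in $H_p$, using the uniform Lipschitz $n$-connectivity of $\partial X_p$, to produce a filling entirely in $Z$.

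The closeness estimate $d(h(g(z)), z) \lesssim \epsilon$ then follows from the construction: for $z \in Z$ there is some $j \in J$ with $z \in C_j \subset \supp \tau_j$, so $g(z)$ lies in a simplex containing $v_j$ of QC-diameter $\lesssim \epsilon$; part (1) of the previous lemma gives $d(h^{(0)}(v_j), z) \lesssim \epsilon$, and the uniform Lipschitz bound on $h$ gives $d(h(g(z)), h(v_j)) \lesssim \epsilon$, so the triangle inequality finishes the bound. The main obstacle will be the small-sphere case under hypothesis (2): such a sphere need not lie in any single $H_p$, so pushing an $X$-filling back into $Z$ requires decomposing the filling along the various component boundaries and applying the $H_p$-extensions along each interface, all while keeping the Lipschitz constants uniformly bounded and independent of $\epsilon$. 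Federer-Fleming style approximation inside the filled ball should localize the interface pieces, and the uniform constants for the $H_p$ should then allow each local push-back to be performed with bounded loss.
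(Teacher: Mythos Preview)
Under hypothesis (1) your inductive extension is exactly the paper's argument. Under hypothesis (2) your treatment of the coarse (all-$I$) simplices is close to the paper's, though the paper is sharper: it extends each coarse simplex \emph{into} $H_p(\epsilon_H)$ itself, so inductively $h_c(\partial\Delta)\subset H_p(\epsilon_H)$ already and no ``adjustment at cost $\lesssim\epsilon'$'' is needed.

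The real gap is in your handling of the fine simplices under hypothesis (2). Filling a small sphere in $X$ and then trying to ``replace any portion of the resulting ball that wanders into a component $X_p$ by a filling of the corresponding interface sphere in $H_p$'' is not a well-defined procedure: the preimage of $X_p$ under a Lipschitz map $D^{k+1}\to X$ need not have a Lipschitz boundary sphere (it can be topologically and metrically wild), Federer--Fleming does not apply without a cell structure on the target, and several components $X_p$ may be involved at once. You flag this as the main obstacle but do not resolve it.

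The paper sidesteps the interface problem entirely by first proving that $Z$ is $\epsilon'$-\emph{locally} Lipschitz $n$-connected for a fixed $\epsilon'>0$ independent of $\epsilon$. The point is a dichotomy: for a map $\tau:S^k\to Z$ with $\Lip\tau\le\epsilon'$, either $\tau(S^k)\subset H_p(\epsilon_H)$ for some $p$ (fill there), or some point of $\tau(S^k)$ lies at distance $>\epsilon_H$ from $X\setminus Z$, in which case the $X$-filling has diameter $\le c\,\epsilon'=\epsilon_H$ and therefore stays in $Z$ --- no interfaces ever arise. With this lemma in hand, the paper first builds $h_c$ on the coarse part, obtaining a \emph{definite} Lipschitz constant independent of $\epsilon$, and \emph{only then} chooses $\epsilon$ small enough that every fine simplex has diameter $\ll \epsilon'/\Lip h_c$, so the already-defined boundary map on a fine simplex has Lipschitz constant $\le\epsilon'$ and can be filled by local Lipschitz connectivity. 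The order matters: $\epsilon$ is chosen after $\Lip h_c$ is known. Your final displacement estimate $d(h(g(z)),z)\lesssim\epsilon$ is correct and matches the paper.
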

\begin{proof}
  In this proof, it will be convenient to let $S^k$ be the boundary of
  the standard $(k+1)$-simplex and $D^k$ be the standard $k$-simplex.
  If $t>0$, we let $tS^k$ and $tD^k$ be scalings of $S^k$ and $D^k$.
  If a space $Y$ is Lipschitz $n$-connected, there is a $c$ such
  that if $k\le n$, any Lipschitz map $\tau:S^k\to Y$ can be
  extended to a Lipschitz map $\bar{\tau}:D^{k+1}\to Y$ with $\Lip
  \bar \tau\le c\Lip \tau$.  By scaling, any Lipschitz map $\tau:t S^k\to Y$ can be
  extended to a Lipschitz map $\bar{\tau}:t D^{k+1}\to Y$ with $\Lip
  \bar \tau\le c\Lip \tau$

  If $Z$ is Lipschitz $n$-connected, then we can use Lipschitz
  $n$-connectivity to extend $h^{(0)}$.  That is, if we have already
  defined $h$ on $\Sigma^{(k)}$ and $\Delta\subset \Sigma$ is a
  $(k+1)$-simplex, then the Riemannian metric on $\Delta$ is
  bilipschitz equivalent to $s(x)D^{k+1}$ for any $x\in \Delta$.
  Since $h|_{\partial \Delta}$ is a Lipschitz map of a $k$-sphere, we
  can extend $h$ over $\Delta$, and the extension satisfies $\Lip
  h\lesssim \Lip h^{(0)}$.

  If $Z$ is not Lipschitz $n$-connected, we need a more careful
  approach.  By hypothesis, $X$ is Lipschitz $n$-connected; let $c>0$
  be the constant in the definition of Lipschitz $n$-connectivity.

  Let $\epsilon'=\epsilon_H/c$ and let $k\le n$.  If $\tau:S^k\to Z$ is
  a map with $\Lip \tau \le \epsilon'$, we claim that $\tau$ can be
  extended to a Lipschitz map on $D^{k+1}$.  If $\tau(S^k)\subset
  H_p(\epsilon_H)$ for some $p\in P$, then we can extend $\tau$ to
  $D^{k+1}$ using the Lipschitz $n$-connectivity of $H_p(\epsilon_H)$.
  Otherwise, there is some $x\in S^k$ such that $d(\tau(x),X\setminus
  Z)>\epsilon_H$.  Since $\diam \bar{\tau}_0(D^{k+1})\le \epsilon_H$,
  the image of $\bar{\tau}_0$ is contained in $Z$.  Therefore, $Z$ is
  $\epsilon'$-locally Lipschitz $n$-connected.


  If $\Delta\subset \Sigma$ is a simplex, we say that it is
  \emph{coarse} if all its vertices are of the form $v_i$ for $i\in
  I$.  We say that it is \emph{fine} if it has a vertex of the form
  $v_j$ for some $j\in J$; all fine simplices have diameter $\lesssim
  \epsilon$ and all coarse ones have diameter $\gtrsim \epsilon$.  By
  the previous lemma, we can choose $h^{(0)}$ so that for every coarse
  simplex $\Delta$, there is some $p\in P$ such that
  $h^{(0)}(\V(\Delta))\subset H_p(\epsilon_H)$.  If $\Sigma_c\subset
  \Sigma$ is the subcomplex consisting of coarse simplices, we can
  extend $h^{(0)}$ to a map
  $h_c:\Sigma^{(0}\cup \Sigma^{(n+1)}_c\to Z$ by induction; if $h_c|_{\partial\Delta}$
  is defined, then $h_c(\partial \Delta)\subset H_p(\epsilon_H)$ for
  some $p\in P$.    We extend $h_c$ over $\Delta$ using the Lipschitz
  $n$-connectivity of $H_p(\epsilon_H)$.  The Lipschitz constant of
  $h_c$ is bounded independently of $\epsilon$.  

  Again by the previous lemma, we may choose $\epsilon$ sufficiently
  small that any fine simplex has diameter $\ll \epsilon'/\Lip h_c$.
  We can then extend $h_c$ over the fine simplices of $\Sigma$ using
  the local Lipschitz connectivity of $Z$ to get the desired map $h$.

  In either case, if $z\in Z$, then $z\in \supp \tau_k$ only if $k\in
  J$.  In particular, $g(z)$ is contained in a fine simplex of
  diameter $\lesssim \epsilon$ and $d(z,z_i)\lesssim \epsilon$, so
  $$d(h(g(z)),z)\le d(h(g(z)),h(v_i))+d(z_i,z)\lesssim \epsilon$$
  as desired.
\end{proof}

Therefore, $h\circ g$ has small displacement.  To complete the proof
of Theorem~\ref{thm:LipToLMR}, we will need a lemma concerning
such maps:
\begin{lemma}\label{lem:smallDisplacement}
  Suppose that $m\le n$, that $\alpha$ is a Lipschitz $m$-cycle in
  $Z$, that $Z$ is $\epsilon_0$-locally Lipschitz $n$-connected, and
  that $C>0$.  For any $\epsilon>0$, there is a $\delta>0$ such that
  if $f:Z\to Z$ is a $C$-Lipschitz map with displacement $\le \delta$
  (i.e., $d(f(z),z)\le \delta$ for all $z\in Z$), then
  $$\FV_Z^{m+1}(f_\sharp(\alpha)-\alpha)\le \epsilon.$$
\end{lemma}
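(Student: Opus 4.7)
The plan is to build an explicit filling of $f_\sharp(\alpha) - \alpha$ by constructing, for each Lipschitz simplex of $\alpha$, a thin prism in $Z$ connecting that simplex to its $f$-image, where ``thin'' is of thickness roughly $\delta$ in the displacement direction. The interior of each prism is filled cell-by-cell using $\epsilon_0$-local Lipschitz $n$-connectivity.

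First I would write $\alpha = \sum a_i \sigma_i$ with $\sigma_i : \Delta^m \to Z$ Lipschitz, and reparameterize each $\sigma_i$ by rescaling $\Delta^m$ to a simplex $L_i \Delta^m$ of diameter $L_i = \Lip(\sigma_i)$ so that $\sigma_i$ becomes $1$-Lipschitz. For each $i$ I then form the product prism $P_i = L_i\Delta^m \times [0,\delta]$, and triangulate it by a chosen combinatorial subdivision of mesh at most $\rho = \epsilon_0/2$ (subdividing vertically only if $\delta > \rho$; otherwise a single vertical layer suffices). The triangulations on the $P_i$ must be compatible on shared faces, so that the side prism over any codimension-one face of $\sigma_i$ agrees with the side prism over the same face of $\sigma_j$ when the two share it.

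Next I would define a map $H_i : P_i \to Z$ by induction on skeleta. On the $0$-skeleton, bottom vertices $(v,0)$ go to $\sigma_i(v)$ and top vertices $(v,\delta)$ go to $f(\sigma_i(v))$. By the $1$-Lipschitz choice and the bound $d(f(z),z)\le \delta$, any two adjacent vertices in the subdivision have image distance at most $\rho + C\delta$; taking $\delta$ small compared to $\epsilon_0/C$ makes this at most $\epsilon_0$. Extending over the $k$-skeleton for $k = 1,\ldots,m+1$, each boundary sphere (rescaled to diameter $1$) has Lipschitz constant at most $\epsilon_0$, so $\epsilon_0$-local Lipschitz $n$-connectivity, which is available in the range $k-1 \le m \le n$, provides an extension whose Lipschitz constant grows by at most a uniform factor. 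This produces $H_i$ with $\Lip H_i \lesssim 1$. Decomposing $P_i$ into $m+1$ canonical $(m+1)$-simplices and pushing forward yields a chain $\beta_i$ with
\[
\partial \beta_i = (f\circ\sigma_i) - \sigma_i - (\text{prism over } \partial \sigma_i),
\]
and the compatibility of triangulations guarantees that summing $\sum a_i(\text{prism over }\partial\sigma_i)$ gives the prism over $\partial\alpha = 0$. Hence $\beta = \sum a_i \beta_i$ satisfies $\partial \beta = f_\sharp(\alpha) - \alpha$.

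Finally I would estimate mass: since $\Lip H_i \lesssim 1$ and $\operatorname{vol}(P_i) \lesssim \delta \cdot L_i^m \sim \delta \cdot \mass(\sigma_i)$, summing gives $\mass \beta \lesssim \delta \cdot \mass \alpha$, where the implicit constant depends on $m$, $C$, and the local Lipschitz connectivity constant of $Z$. Since $\alpha$ is fixed, choosing $\delta$ small enough makes $\mass \beta \le \epsilon$, and then $\FV_Z^{m+1}(f_\sharp \alpha - \alpha) \le \epsilon$. The main obstacle is the bookkeeping in step two: arranging the prism subdivisions coherently enough that side faces cancel while keeping each small cell within the $\epsilon_0$-window where local Lipschitz $n$-connectivity applies. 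Both requirements can be met simultaneously by first fixing the combinatorial subdivision and then taking $\delta$ smaller than $\epsilon_0/C$.
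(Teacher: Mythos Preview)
Your proposal is correct and follows essentially the same strategy as the paper: build a prism homotopy between $\alpha$ and $f_\sharp(\alpha)$, subdivide finely enough that each cell falls within the $\epsilon_0$-window, and extend skeleton-by-skeleton using local Lipschitz $n$-connectivity to obtain a filling of mass $\lesssim \delta$. The only notable difference is bookkeeping: the paper first assembles all the simplices of $\alpha$ into a single simplicial $m$-complex $A$ carrying a cycle $\omega$ with $g_\sharp(\omega)=\alpha$, so that one extension of $g\cup(f\circ g)$ over $A\times[0,1]$ handles all side-face cancellations at once, whereas you work prism-by-prism and must enforce compatibility of the $H_i$ on shared faces by hand---both approaches work, but the paper's packaging sidesteps exactly the obstacle you flagged.
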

\begin{proof}
  Since $Z$ is locally Lipschitz $n$-connected, if $M$ is a simplicial
  $(m+1)$-complex, $N$ is a subcomplex, and $f:N\to Z$ is a map with
  sufficiently small Lipschitz constant, then there is an extension
  $\bar{f}:M\to Z$ with Lipschitz constant $\sim \Lip(f)$.  Write
  $\alpha$ as a sum $\alpha=\sum_{i=1}^k\alpha_i$ of Lipschitz maps
  $\alpha_i:\Delta^m\to Z$.  Let $L$ be the maximum Lipschitz constant
  of the $\alpha_i$'s.  In the following calculations, all our
  implicit constants will depend on $k$, $n$, $Z$, $C$, and $L$.
  We claim that
  $$\FV_Z^{m+1}(f_\sharp(\alpha)-\alpha)\lesssim \delta.$$
  
  First, we can subdivide $\Delta^m$ into $\lesssim \delta^{-m}$
  simplices each with diameter $\le \delta/L$.  We can use this
  subdivision to replace $\alpha$ with a sum
  $\alpha'=\sum_{i=1}^{k'}\alpha'_i$ where $k'\lesssim \delta^{-m}$
  and each $\alpha'_i:\Delta^m\to Z$ has Lipschitz constant at most
  $\delta$.

  There is a simplicial $m$-complex $A$ with at most $k'$
  top-dimensional faces, a simplicial cycle $\omega$ on $A$, and a map
  $g:A\to Z$ with $\Lip(g)\le \delta$ such that the restriction of $g$
  to each top-dimensional face of $A$ is one of the $\alpha'_i$'s and
  $g_\sharp(\omega)=\alpha'$.  Define $r_0:A \times \{0,1\}\to Z$ by
  letting $r_0|_{A\times 0}=g$ and $r_0|_{A\times 1}=f\circ g$.  Then
  $\Lip(r_0)\lesssim \delta$, and if $\delta$ is sufficiently small, we
  can extend it to a Lipschitz map $r:A\times [0,1]\to
  Z$ with $\Lip r\sim \Lip r_0$.  This is a homotopy from $g$ to $f\circ g$, so the push-forward
  of $\omega\times[0,1]$ is a filling of $f_\sharp(\alpha)-\alpha$ with
  mass
  $$\mass r_\sharp(\omega\times[0,1])\lesssim k' \delta^{m+1}\lesssim \delta$$
  as desired.
\end{proof}

\begin{proof}[Proof of Theorem~\ref{thm:LipToLMR}]
  Suppose that $\alpha$ is a $(m-1)$-cycle in $Z$ and $\beta$ is a
  $m$-chain filling it.  Let $\Sigma_J$ be the subcomplex of $\Sigma$
  spanned by the vertices $v_j, j\in J$.  Then $g(Z)\subset \Sigma_J$,
  and $g_\sharp(\alpha)$ is a cycle in $\Sigma_J$ with mass $\le
  \Lip(g)^{m-1} \mass \alpha$.  Each simplex of $\Sigma_J$ has
  diameter $\sim \epsilon$, so by Thm.~\ref{thm:FedFlemSimp}, there is
  a $c_3>0$ depending only on $X$, a simplicial cycle
  $P_\alpha:=P_{\Sigma_J}(g_\sharp(\alpha))$ approximating
  $g_\sharp(\alpha)$, and a chain
  $Q_\alpha:=Q_{\Sigma_J}(g_\sharp(\alpha))$ such that $\mass
  Q_\alpha\le c_3 \epsilon \mass(\alpha)$ and $\partial
  Q_\alpha=P_\alpha-g_\sharp(\alpha)$.

  Then $g_\sharp(\beta)+Q_\alpha$ is a $m$-chain in $\Sigma$ with boundary
  $P_\alpha$ and mass
  $$\mass(g_\sharp(\beta)+Q_\alpha)\le \Lip(g)^{m}\mass\beta+c_3 \epsilon \mass(\alpha).$$
  Thm.~\ref{thm:FedFlemConf} lets us approximate this by a chain
  $$P_\beta:=P_{\Sigma}(g_\sharp(\beta)+Q_\alpha)$$
  with boundary $P_\alpha$.  

  By Lemma~\ref{lem:smallDisplacement}, if $\epsilon_0>0$, then
  for $\epsilon$ sufficiently small, there is a Lipschitz
  $m+1$-chain $R$ in $Z$ such that
  $$\partial R=\alpha-(h\circ g)_\sharp(\alpha)$$
  and $\mass R\le \epsilon_0$.  Let 
  $$B=R-h_\sharp(Q_\alpha)-h_\sharp(P_\beta)$$
  Then $\partial B=\alpha$
  and
  $$\mass B \lesssim \mass\beta+\epsilon \mass(\alpha)+\epsilon_0,$$
  so 
  $$\FV_Z^k(\alpha)\lesssim \mass\beta$$
  as desired.
\end{proof}

The rest of this paper is dedicated to applying this theorem to
horospheres and lattices in symmetric spaces and buildings.

\section{Fillings in $\Sol_{2n-1}$}\label{sec:sol}

Theorem~\ref{thm:LipToLMR} is useful because it reduces a
difficult-to-prove statement about the undistortedness of an inclusion
to an easier-to-prove Lipschitz extension property.  For example, in
this section, we will prove:
\begin{theorem}\label{thm:mainSol}
  The solvable Lie group $\Sol_{2n-1}=\R^{n-1}\ltimes \R^n$ is Lipschitz
  $(n-2)$-connected.
\end{theorem}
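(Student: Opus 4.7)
Plan: The group $\Sol_{2n-1} = A \ltimes N$, with Cartan $A = \R^{n-1}$ acting on $N = \R^n$ via $n$ characters $c_1, \dots, c_n$ satisfying $\sum c_i = 0$, embeds as a horosphere $Z$ in the CAT(0) space $X = (\Hyp^2)^n$. The natural first attempt for an $l$-Lipschitz map $f\colon S^d \to Z$ is to cone $f$ inside $X$ (choosing a basepoint in the horoball bounded by $Z$) to get an $O(l)$-Lipschitz extension $F\colon D^{d+1} \to X$ and then project back to $Z$. The catch is that the natural projection onto $Z$ via the Busemann-gradient flow is not Lipschitz: moving a point by a vertical height $|h|$ stretches horizontal distances by a factor of $e^{|h|/n}$, giving an $e^{O(l)}$ blow-up in the Lipschitz constant. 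This is precisely the mechanism responsible for the exponential $n$-th Dehn function of $\Sol_{2n-1}$, so the condition $d \le n-2$ must enter essentially.

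The strategy I would pursue is to construct the filling directly inside $\Sol_{2n-1}$ via the Cartan decomposition. First project to the Cartan: $f_A = \pi_A \circ f \colon S^d \to A$ is $l$-Lipschitz, and since $A$ is Euclidean we extend by coning to a basepoint, obtaining $\bar f_A\colon D^{d+1} \to A$ with $\Lip(\bar f_A) \lesssim l$. The extension is then sought in the form $\bar f(q) = (\bar f_A(q), x(q))$, where the unipotent coordinate $x \colon D^{d+1} \to N$ must extend $\pi_N \circ f$. Because the fiber metric at $a \in A$ is $\sum_i e^{2c_i(a)}\, dx_i^2$, the lift $\bar f$ is $O(l)$-Lipschitz precisely when, at each $q$, $|dx_i(q)| \lesssim l \cdot e^{-c_i(\bar f_A(q))}$ for every $i$; that is, the coordinate $x_i$ must vary slowly wherever $c_i$ is large and positive.

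The main obstacle, and where the hypothesis $d \le n-2$ enters, is that no point of $A$ has $c_i(a) > 0$ for every $i$ (since $\sum c_i = 0$), so no single Cartan height lets us contract all $N$-directions simultaneously. Instead, one decomposes $D^{d+1}$ into regions adapted to the chamber structure of $A$ cut out by the hyperplanes $\{c_i = 0\}$, and in each chamber extends only those $x_i$ with $c_i > 0$, which contract and so can be filled cheaply; the remaining coordinates are handled at later stages using the flexibility of the boundary data. The critical dimensional count is $\dim D^{d+1} = d+1 \le n-1 = \dim A$: this is exactly enough room to arrange a coherent chamber decomposition and a consistent order of coordinate extensions. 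The expected technical core of the argument is checking that these chamber-by-chamber lifts glue into a single $O(l)$-Lipschitz map, with the Lipschitz bounds matching at the chamber walls; the hypothesis $d \le n-2$ ensures this gluing has enough freedom, and the case $d = n-1$ (where the image of $\bar f_A$ is of full dimension in $A$ and must cross every chamber wall) is exactly where the procedure breaks down, consistent with the known exponential behavior of the $n$-th Dehn function.
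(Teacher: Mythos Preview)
What you have written is a plan, not a proof: you explicitly identify the ``expected technical core'' (constructing the chamber-by-chamber lifts of the $x_i$ and gluing them with matching Lipschitz bounds at the chamber walls) and leave it undone. Since that step is where the entire content of the theorem lives, this is a genuine gap, not a routine verification.

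The paper takes a different and more structured route that sidesteps the coordinate difficulties you anticipate. Rather than fixing the Cartan projection $\bar f_A$ in advance and then threading the unipotent coordinates through it, the paper works with \emph{$k$-slices}: intersections of $\Sol_{2n-1}\subset(\Hyp_2)^n$ with products $s_1\times\cdots\times s_n$ in which $k$ of the $s_i$ are all of $\Hyp_2$ and the remaining $n-k$ are vertical geodesics. The point is that a $k$-slice with $k<n$ is bilipschitz to the Hadamard manifold $(\Hyp_2)^k\times\R^{n-k-1}$, hence Lipschitz $m$-connected for every $m$. One then builds the map $\Omega:\Delta_{\Sol_{2n-1}}^{(n-1)}\to\Sol_{2n-1}$ of Lemma~\ref{lem:LipToInfSimp} skeleton by skeleton, maintaining the invariant that each $k$-cell lands in a $k$-slice (a \emph{slice map}). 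The inductive step (Lemmas~\ref{lemma:sliceExtensions} and~\ref{lem:solSegments}) uses distance-decreasing projections $p_i$ that collapse the $i$-th $\Hyp_2$ factor onto a vertical geodesic through the basepoint; composing $p_1,\dots,p_n$ pushes any slice map into a $0$-slice, and the homotopies between successive compositions are filled cell by cell using only the CAT(0) property of the intermediate slices. The hypothesis $k<n$ enters exactly as the condition that every slice encountered has $k<n$ and is therefore nonpositively curved.

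The structural difference from your plan is that the paper never commits to the Cartan component of the filling up front; it emerges from the sequence of projections and homotopies. Your proposal cones $\bar f_A$ first and must then satisfy $|dx_i|\lesssim l\,e^{-c_i(\bar f_A)}$ along a Cartan path you no longer control, with the boundary data fixed rather than ``flexible.'' It is not clear this can be made to work without letting $\bar f_A$ vary adaptively with the chamber decomposition, and once you allow that, you are essentially rediscovering the slice structure. The paper's formulation has the advantage that at every stage the extension problem is the trivial one inside a Hadamard space, so no delicate gluing estimates are needed.
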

Theorem~\ref{thm:Sol} follows as a direct application of
Theorem~\ref{thm:LipToLMR}.

We start by defining $\Sol_{2n-1}$, $n\ge 2$.  This group is a
solvable Lie group which can be written as a semidirect product of
$\R^n$ and $\R^{n-1}$, where $\R^{n-1}$ acts on $\R^n$ as the group of
$n\times n$ diagonal matrices with positive coefficients and
determinant 1.  When $n=2$, this is the three-dimensional solvable
group corresponding to solvegeometry.

All the constants and implicit constants in this section will depend
on $n$.

One feature of this group is that it can be realized as a horosphere
in a product of hyperbolic planes.  Let $\Hyp_2$ be the hyperbolic
plane and let $\beta:\Hyp_2\to \R$ be a Busemann function for
$\Hyp_2$.  We can define Busemann functions $\beta_1,\dots,\beta_n$ in
the product $\Hyp_2^n$ by letting $\beta_i(x_1,\dots,x_n)=\beta(x_i)$.
Then $b=n^{-1/2}\sum\beta_i$ is a Busemann function for $\Hyp_2^n$,
and $\Sol_{2n-1}$ acts freely, isometrically, and transitively on the
resulting horosphere $b^{-1}(0)$.  The metric induced on $\Sol_{2n-1}$
by inclusion in $\Hyp_2^n$ is bilipschitz equivalent to a
left-invariant Riemannian metric on $\Sol_{2n-1}$.

This group also appears as a subgroup of a Hilbert modular group.  If
$\Gamma\subset \SL_2(\R)^n$ is a Hilbert modular group and
$X=(\Hyp_2)^n$, then there is a collection $\mathcal{H}$ of disjoint
open horoballs in $X$ such that the boundary of each horosphere is
bilipschitz equivalent to $\Sol_{2n-1}$ and $\Gamma$ acts cocompactly
on $X\smallsetminus \mathcal{H}$ \cite{PittetHMG}.  Consequently,
Theorem~\ref{thm:HilbertModular} is also a corollary of
Theorem~\ref{thm:mainSol}.

To prove Theorem~\ref{thm:mainSol}, we will use the following
condition, which is equivalent to Lipschitz connectivity (see
\cite{GroCC}):
\begin{lemma} \label{lem:LipToInfSimp} Let $Z$ be a metric space, let
  $\Delta_Z$ be the infinite-dimensional simplex with vertex set $Z$,
  and let $\Delta_Z^{(k)}$ be its $k$-skeleton.  Let
  $\langle z_0,\dots, z_k\rangle$ denote the $k$-simplex with vertices
  $z_0,\dots,z_k$.  Then $Z$ is Lipschitz $n$-connected if and only if
  there exists a map $\Omega:\Delta_Z^{(n+1)}\to Z$ such that
  \begin{enumerate}
  \item For all $z\in Z$, $\Omega(\langle z\rangle)=z$.
  \item There is a $c$ such that for any $d\le n+1$ and any simplex
    $\delta=\langle z_0,\dots,z_d\rangle$, we have
    $$\Lip \Omega|_{\delta}\le c \diam \{z_0,\dots,z_d\}.$$
  \end{enumerate}
\end{lemma}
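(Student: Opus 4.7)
The plan is to prove the equivalence by treating each direction separately. For the forward direction (Lipschitz $n$-connectivity implies existence of $\Omega$), I would construct $\Omega$ by induction on the skeleton of $\Delta_Z$. Start with $\Omega(\langle z\rangle) = z$ on the 0-skeleton, which trivially satisfies both conditions. Inductively, suppose $\Omega$ is defined on the $d$-skeleton satisfying (2) for some $d \le n$; for each $(d+1)$-simplex $\delta = \langle z_0, \ldots, z_{d+1}\rangle$, the restriction $\Omega|_{\partial \delta}$ is continuous and Lipschitz on each face with constant at most $c \diam\{z_0, \ldots, z_{d+1}\}$. Since the simplicial sphere $\partial \delta$ is bilipschitz equivalent to the round $S^d$ (with constant depending only on $d$), the map $\Omega|_{\partial \delta}$ is globally $O(\diam\{z_0, \ldots, z_{d+1}\})$-Lipschitz. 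Lipschitz $d$-connectivity (available because $d \le n$) then provides an extension to $\delta$ with the same diameter-controlled Lipschitz constant, multiplied by the Lipschitz connectivity constant of $Z$. Iterating from $d = 0$ through $d = n$ yields $\Omega$ on the $(n+1)$-skeleton.

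For the backward direction, given an $l$-Lipschitz map $f : S^d \to Z$ with $d \le n$, I would build $\bar f : D^{d+1} \to Z$ via a Whitney-type nerve construction. Cover $D^{d+1}$ by sets $U_i$ of diameter $\lesssim r_i$, where $r_i := \max(\mu, c\cdot d(U_i, S^d))$ for a small floor scale $\mu > 0$, arranged so that the multiplicity is at most $d+2$ and that adjacent $U_i, U_j$ have comparable $r_i \sim r_j$. Choose, for each $U_i$, a nearest boundary point $x_i \in S^d$ and set $z_i = f(x_i)$. The Whitney property combined with Lipschitz continuity of $f$ forces $d(z_i, z_j) \lesssim l r_i$ for adjacent $U_i, U_j$. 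Equip the nerve $N$ with a QC metric in which the simplex $\langle v_{i_1}, \ldots, v_{i_k}\rangle$ has size $\sim \max_j r_{i_j}$, and take a partition-of-unity map $\psi : D^{d+1} \to |N|$ together with the map $|N| \to Z$ defined by applying $\Omega$ to each simplex $\langle z_{i_1}, \ldots, z_{i_k}\rangle$. Since $\dim N \le d+1 \le n+1$, $\Omega$ is defined everywhere, and on each simplex it is $\lesssim l r$-Lipschitz for $r$ the simplex's QC size, which yields a Lipschitz constant of order $l$ after rescaling. The composite $\tilde f : D^{d+1} \to Z$ is then $O(l)$-Lipschitz and agrees with $f$ on $S^d$ up to a displacement $\lesssim l\mu$.

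To upgrade $\tilde f$ to an honest extension of $f$, I would modify it inside a thin collar $S^d \times [0,\mu] \subset D^{d+1}$ by interpolating between $\tilde f$ on the inner collar boundary and $f$ on $S^d$. For each $x \in S^d$ the pair $(\tilde f(x), f(x))$ spans a 1-simplex in $\Delta_Z$ of diameter $\lesssim l\mu$, and $\Omega$ on it gives a short path parameterized by the collar's radial coordinate; to keep the correction tangentially Lipschitz, one combines nearby pairs into 2- or 3-simplices of diameter $\lesssim l\mu$ and uses $\Omega$ on them. The main obstacle I expect is precisely this last step: organizing the collar's triangulation and the accompanying simplices in $\Delta_Z$ so that the various $\Omega$ pieces glue continuously, and so that the tangential Lipschitz constant of the correction stays $O(l)$ independent of $\mu$. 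The Whitney-nerve portion is essentially standard (following Lang--Schlichenmaier), so the real difficulty lies in the bookkeeping of constants and the smooth matching of the correction to $\tilde f$ at the inner collar boundary.
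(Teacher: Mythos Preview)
Your forward direction is exactly the paper's argument.

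For the backward direction, your approach is viable but unnecessarily complicated, and the complication you introduce is precisely what creates the ``main obstacle'' you flag. The floor scale $\mu$ forces you to do a collar correction, and while that correction can be carried out, it is entirely avoidable. The paper's proof simply omits the floor: take the standard Whitney decomposition of the interior of $D^{d+1}$ into dyadic cubes $C$ with $\diam C \sim d(C,S^d)$, with no lower bound on cube size, and triangulate to get a complex $\tau$. For each vertex $v$ of $\tau$, let $h(v)$ be a nearest point of $S^d$; the Whitney condition makes $h$ Lipschitz on $\tau^{(0)}$, so $g_0 = \alpha\circ h$ is $O(l)$-Lipschitz. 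Extend over each simplex $\langle v_0,\dots,v_k\rangle$ of $\tau$ by $\Omega|_{\langle g_0(v_0),\dots,g_0(v_k)\rangle}$; on a simplex of diameter $s$, the vertex images have mutual distances $\lesssim ls$, so $\Omega$ contributes Lipschitz constant $\lesssim ls$, and composing with the rescaling $\langle v_0,\dots,v_k\rangle\to\Delta_k$ (Lipschitz constant $\sim s^{-1}$) gives Lipschitz constant $\lesssim l$ on each simplex, hence globally on the interior. Now simply set $g=\alpha$ on $S^d$: since the simplices shrink to zero as one approaches the boundary and $g_0(v)\to\alpha(x)$ as $v\to x\in S^d$, this extension is continuous, and a continuous map that is $O(l)$-Lipschitz on a dense subset is $O(l)$-Lipschitz everywhere.

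So the difference is that the paper lets the Whitney scale go to zero, which makes the boundary matching automatic and eliminates the collar step altogether. Your Lang--Schlichenmaier-style construction with a floor scale is the right tool when the boundary is a general closed set (as in Section~2 of the paper), but here the boundary is the smooth sphere $S^d$ and the simpler classical Whitney decomposition suffices.
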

\begin{proof}
  One direction is clear; if $Z$ is Lipschitz $n$-connected, then one
  can construct $\Omega$ by letting $\Omega(\langle z \rangle)=z$ for all
  $z\in Z$, then using the Lipschitz connectivity of $Z$ to extend
  $\Omega$ over each skeleton inductively.  

  The other direction is an application of the Whitney decomposition.
  We view $D^{d+1}$ as a subset of $\R^{d+1}$; by the Whitney covering
  lemma, the interior of $D^{d+1}$ can be decomposed into a union of
  countably many dyadic cubes such that for each cube $C$, one has
  $\diam C\sim_d d(C,S^d)$.  We can decompose each cube into boundedly
  many simplices to get a triangulation $\tau$ of the interior of
  $D^{d+1}$ where each simplex is bilipschitz equivalent to a scaling
  of the standard simplex.

  We construct a map $h:D^{d+1}\to Z$ using this triangulation.  For
  each vertex $v$ in $\tau$, let $h(v)$ be a point in $S^d$ such that
  $d(v,h(v))$ is minimized.  One can check that $h$ is a Lipschitz map
  from $\tau^{(0)}\to S^d$, so $g_0=\alpha\circ h:\tau^{(0)}\to Z$ is
  a Lipschitz map with $\Lip(g_0)\sim_{d,\Omega} \Lip(\alpha)$.  We
  can extend $g_0$ to a map $g:\tau\to Z$ by sending the simplex
  $\langle v_0,\dots,v_k\rangle $ to the simplex $\Omega(\langle
  g_0(v_0),\dots,g_0(v_k)\rangle)$, and this is also Lipschitz with
  $\Lip(g)\sim_{d,\Omega} \Lip(\alpha)$.

  Finally, we extend $g$ to a map $\beta:D^{d+1}\to Z$ by defining
  $g(v)=\alpha(v)$ when $v\in S^{d}$.  Since the diameter of the
  simplices of $\tau$ goes to zero as one approaches the boundary,
  this extension is continuous and therefore Lipschitz, as desired.
\end{proof}

It therefore suffices to prove the following:
\begin{lemma} \label{lem:infSimplex} Let $\Delta=\Delta_{\Sol_{2n-1}}$ be the
  infinite-dimensional simplex with vertex set $\Sol_{2n-1}$.  There
  is a map $\Omega:\Delta^{(n-1)}\to \Sol_{2n-1}$ which satisfies the
  properties in Lemma~\ref{lem:LipToInfSimp}.  Therefore, $\Sol_{2n-1}$ is Lipschitz $(n-2)$-connected.
\end{lemma}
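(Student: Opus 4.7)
By Lemma~\ref{lem:LipToInfSimp}, it suffices to construct a map $\Omega:\Delta^{(n-1)}\to \Sol_{2n-1}$ satisfying conditions (1) and (2). I plan to build $\Omega$ explicitly using the semidirect product structure. Write a group element as $(u,w)\in \R^{n-1}\times \R^n$, and let $\delta_u$ denote the diagonal matrix with entries $e^{\alpha_i(u)}$, where $\alpha_1,\ldots,\alpha_n$ are the weights of the $\R^{n-1}$--action on $\R^n$, subject to $\sum_i \alpha_i=0$. The crucial algebraic fact, which explains why Lipschitz connectivity should hold up to dimension $n-2$ but not beyond, is that the $\alpha_i$ are in general position modulo this single relation: given any $I\subset\{1,\ldots,n\}$ with $|I|\le n-1$ and any $M>0$, there exists $u\in\R^{n-1}$ with $|u|\lesssim M$ such that $\alpha_i(u)\le -M$ for every $i\in I$.

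I would construct $\Omega$ by induction on the simplex dimension. Set $\Omega(\langle z\rangle)=z$ on the $0$-skeleton, and fill each edge $\langle z_0,z_1\rangle$ by a geodesic of length $d(z_0,z_1)$. For a $k$-simplex $\sigma=\langle z_0,\ldots,z_k\rangle$ of diameter $D$, I would first apply a left translation so that $z_0=e$; the boundary $\partial\sigma$ is then filled by the inductive hypothesis. To fill the interior of $\Delta^k$, I plan to use a Whitney-style decomposition parametrized by a depth coordinate $s\in[0,\log D]$: at depth $s$ the image will lie near the Cartan translate by an element $u(s)\in\R^{n-1}$ chosen so that the horocyclic coordinates of each $z_j$ have been contracted by a factor $e^{-s}$ along the axes in which the $z_j$ differ from the identity. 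The deepest patch, at depth $\sim\log D$, is then a filling of an already-contracted simplex of diameter $O(1)$, which exists trivially by local Lipschitz $n$-connectivity near the identity of the Lie group.

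The main obstacle is the Lipschitz estimate. The Cartan travel contributes at most $|u(s)|\lesssim \log D\le D$ to the Lipschitz constant along the depth direction, while the horocyclic movement in the shell at depth $s$ is of order $De^{-s}$ in the $\Sol$--metric; summing these telescoping contributions across depths yields a total bound $\lesssim D$, as required. The hypothesis $k\le n-1$ enters exactly at the choice of $u(s)$: in order to contract the at most $k$ coordinate directions in which the $z_j$ differ from $z_0$, one needs $k$ Cartan directions available simultaneously, which is precisely what the general-position fact above supplies as long as $k\le n-1$. This is the higher-dimensional analog of Gromov's calculation \cite{GroAI} giving the quadratic Dehn function for $\Sol_3$, and carrying it out uniformly over all simplices of $\Delta^{(n-1)}$ and all possible diameters $D$ is the core technical content of the proof. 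Once $\Omega$ is built and shown to be Lipschitz in the sense of Lemma~\ref{lem:LipToInfSimp}, that lemma immediately yields the asserted Lipschitz $(n-2)$-connectivity of $\Sol_{2n-1}$.
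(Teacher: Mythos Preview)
Your proposal has a genuine gap at the point where you invoke the dimension constraint. You claim that the vertices $z_0,\dots,z_k$ differ from $z_0$ in ``at most $k$ coordinate directions,'' and that this is where the hypothesis $k\le n-1$ enters. This is false: already two generic points of $\Sol_{2n-1}$ differ in all $n$ horocyclic coordinates. The Cartan action is diagonal in the standard basis of $\R^n$, so a single element $u(s)\in\R^{n-1}$ can only contract a subspace spanned by some proper subset of the coordinate axes; it cannot contract a generic $k$-dimensional affine span of the $w$-parts of the $z_j$. Your general-position fact about the weights is correct, but it does not apply here, because the set $I$ of axes along which the vertices differ is generically all of $\{1,\dots,n\}$, not a set of size $\le k$. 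Consequently the Lipschitz estimate you sketch does not go through: the horocyclic displacement at depth $s$ is not $De^{-s}$ in every direction, since at least one weight $\alpha_i(u(s))$ must be nonnegative.

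The paper avoids this by never attempting to contract several horocyclic directions with a single Cartan move. It introduces \emph{$k$-slices}: intersections of $\Sol_{2n-1}\subset(\Hyp_2)^n$ with products in which $n-k$ of the $\Hyp_2$ factors are replaced by vertical geodesics. Each $k$-slice with $k<n$ is bilipschitz to a Hadamard manifold, hence Lipschitz $\infty$-connected. The inductive invariant is that the boundary map is a \emph{slice map} (each $d$-cell lands in a $d$-slice). To extend over a $k$-cell one applies, one factor at a time, the horospherical projection $p_i$ onto a fixed vertical geodesic; each homotopy from $f$ to $p_i\circ f$ raises the slice dimension by at most one and can be filled inside that slice by nonpositive curvature. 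After all $n$ projections the image lies in a $0$-slice and is coned off there. Thus the constraint $k\le n-1$ enters not through a count of coordinate directions spanned by the vertices, but through the requirement that every slice used in the construction have dimension strictly less than $n$, so that it is nonpositively curved.
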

Our construction is based on techniques from \cite{BestEskWort}; we will
construct $\Omega$ using nonpositively curved subsets of $\Sol_{2n-1}$
called $k$-slices.

Recall that we defined $\Sol_{2n-1}$ as a horosphere in $(\Hyp_2)^n$.
Let $\beta:\Hyp_2\to \R$ be the Busemann function used to define
$\Sol_{2n-1}$ and let $*$ be the corresponding point at infinity.  If
$\gamma$ is a geodesic in $\Hyp_2$ which has one endpoint at $*$, we call
$\gamma$ a \emph{vertical geodesic}.  For $i=1,\dots,n$, let
$s_i\subset \Hyp_2$ be either a vertical geodesic or all of $\Hyp_2$.  If
$k$ of the $s_i$'s are equal to $\Hyp_2$, we call the intersection
$s_1\times \dots\times s_n\cap \Sol_{2n-1}$ a \emph{$k$-slice}.

Suppose that $k<n$ and that $S$ is a $k$-slice; without loss of
generality, we may assume that
$$S=\Hyp_2 \times \dots\times \Hyp_2\times \gamma_1 \times
\dots\gamma_{n-k}\cap \Sol_{2n-1}.$$ 
Then the projection to the first
$n-1$ factors (i.e., all but the last factor) is a homeomorphism from $S$ to $(\Hyp_2)^k \times
\R^{n-k-1}$.  In fact, this map is bilipschitz, so $S$ is bilipschitz
equivalent to a Hadamard manifold.  

If $k<n$, then any $k$-slice is Lipschitz $d$-connected for any $d$:
\begin{lemma}\label{lem:negCurvExtend}
  If $X$ is a Hadamard manifold, it is Lipschitz $n$-connected for any $n$.
\end{lemma}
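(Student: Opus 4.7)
The plan is to use the classical cone construction, which works verbatim because any Hadamard manifold is CAT(0) and hence has unique geodesics between any two points together with a convex distance function. Given an $l$-Lipschitz map $f:S^d\to X$, I fix any $x_0\in S^d$ and take $p=f(x_0)$ as basepoint. Since $\diam S^d=1$, we have $d(p,f(x))\le l$ for every $x\in S^d$. I then define $\bar f:D^{d+1}\to X$ by coning: under the canonical identification of $D^{d+1}$ with the cone on $S^d$, I write $y=tx$ with $x\in S^d$ and $t\in[0,1]$ and let $\bar f(tx)$ be the point at parameter $t$ along the unique geodesic from $p$ to $f(x)$. By construction $\bar f$ agrees with $f$ on the boundary sphere (where $t=1$).

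To verify that $\bar f$ is $Cl$-Lipschitz for a constant $C=C(d)$, I would estimate $d(\bar f(tx),\bar f(t'x'))$ by the triangle inequality through the intermediate point $\bar f(tx')$. For the tangential component, I apply the CAT(0) inequality to the two geodesics from $p$ to $f(x)$ and from $p$ to $f(x')$, both parametrized proportionally to arc length on $[0,1]$; this yields
$$d(\bar f(tx),\bar f(tx'))\le t\,d(f(x),f(x'))\le tl\,d_{S^d}(x,x').$$
For the radial component, both $\bar f(tx')$ and $\bar f(t'x')$ lie on a common geodesic of total length $d(p,f(x'))\le l$, so $d(\bar f(tx'),\bar f(t'x'))\le l\,|t-t'|$. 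Summing and comparing with the cone metric on $D^{d+1}$, which is bilipschitz equivalent to the intrinsic Euclidean metric on the disc with a constant depending only on $d$, gives $\Lip\bar f\lesssim_d l$.

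I do not anticipate any genuine obstacle: this is the classical Cartan-Hadamard coning argument, the same one that yields a quadratic isoperimetric inequality for CAT(0) spaces and that the introduction already alludes to. The only mildly fiddly step is the comparison between the cone polar coordinates on $D^{d+1}$ and its intrinsic metric, and this is routine. In particular the resulting constant depends only on $d$, not on $X$ or on $f$, so the lemma holds with a single implicit constant for each $n$ (and in fact for all $n$ simultaneously).
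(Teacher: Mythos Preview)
Your proposal is correct and is essentially the same argument the paper gives: cone off to a basepoint along geodesics and use convexity of the distance function in a Hadamard (CAT(0)) space to bound the Lipschitz constant. In fact you supply more detail than the paper, which simply asserts that the coned map has $\Lip\bar\alpha\lesssim\Lip\alpha$ by convexity; your tangential/radial split is exactly the verification the paper leaves implicit.
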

\begin{proof}
  Let $\alpha:S^n\to X$, and let $v\in S^n$.  Let $(x,r)\in S^n\times
  [0,1]$ be polar coordinates on $D^{n+1}$.  We can construct a map
  $\bar{\alpha}:D^{n+1}\to X$ by letting $\bar{\alpha}(x,r)$ be the
  geodesic from $\alpha(v)$ to $\alpha(x)$.  Because the distance
  function on $X$ is convex, this is a Lipschitz map with Lipschitz
  constant $\lesssim \Lip(\alpha)$.
\end{proof}

If $\tau$ is a polyhedral complex and $f:\tau\to \Sol_{2n-1}$, we say
that $f$ is a \emph{slice map} if the image of every cell $\delta$ of
$\tau$ is contained in a $(\dim \delta)$-slice.  

Our main tool in the proof of Lemma~\ref{lem:infSimplex} is the
following:
\begin{lemma}\label{lemma:sliceExtensions}  
  Let $k<n$.  Suppose that $\sigma$ is a polyhedral complex which is
  bilipschitz equivalent to $S^{k-1}$.  Then there is a $c>0$ and a
  polyhedral complex $\tau$ bilipschitz equivalent to $D^k$ which has
  boundary $\sigma$.  Furthermore, if $f:\sigma\to
  \Sol_{2n-1}$ is a Lipschitz slice map, there is an extension $g:\tau\to
  \Sol_{2n-1}$ which is a slice map with $\Lip(g)\le c\Lip(f)$.
\end{lemma}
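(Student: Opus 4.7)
The plan is to induct on $k$, with Lemma~\ref{lem:negCurvExtend} (Hadamard manifolds are Lipschitz $d$-connected for every $d$) as the core extension tool, applied within slices: since $k < n$, every slice I encounter has dimension index at most $n-1$ and so is bilipschitz to a Hadamard manifold. The auxiliary geometric input I will rely on is that any two points of $\Sol_{2n-1}$ can be connected by a concatenation of at most $n$ arcs, each lying in its own $1$-slice, by changing one horocyclic coordinate at a time and adjusting the remaining $y$-coordinates to maintain the Busemann constraint.

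For the base case $k=1$, $\sigma$ is two points $z_1, z_2$. I would build $\tau$ as a subdivided arc with intermediate vertices $r_0 = f(z_1), r_1, \ldots, r_n = f(z_2)$ chosen so that $r_{i-1}$ and $r_i$ agree in all but the $(n-i+1)$st horocyclic coordinate; each consecutive pair then lies in a common $1$-slice, which is totally geodesic in $(\Hyp_2)^n$, and I would map the corresponding edge in by the in-slice geodesic. The Busemann constraint can be satisfied while keeping the total length $\lesssim d_{\Sol_{2n-1}}(f(z_1),f(z_2)) \lesssim \Lip(f)$.

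For the inductive step I pick a vertex $v$ of $\sigma$, set $p=f(v)$, and construct $\tau$ by coning $\sigma$ toward $p$ through intermediate subdivisions. Each top-dimensional cell $\delta$ of $\sigma$ satisfies $f(\delta)\subset S_\delta$ for some $(k-1)$-slice $S_\delta$, and $S_\delta$ lies in $n-k+1$ candidate $k$-slices (one for each of its frozen vertical-geodesic directions that could be promoted to $\Hyp_2$). For a chosen such $k$-slice $T_\delta$, I would use the base case to insert a $1$-slice interpolation from $p$ to a convenient point of $T_\delta$, and then appeal to Lemma~\ref{lem:negCurvExtend} inside the Hadamard manifold $T_\delta$ to fill the resulting $(k-1)$-sphere (made of $f|_\delta$, the interpolation, and the faces already defined on the boundary of the cone over $\delta$) by a $k$-disk sitting entirely in $T_\delta$.

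The main obstacle is coherence: generically $p$ lies in no $T_\delta$, so direct coning is impossible, and two adjacent top-dimensional cells $\delta_1,\delta_2$ may impose different choices of $k$-slice along their shared $(k-2)$-face. The fix is to apply the inductive hypothesis in dimension $k-1$ to slice-map-fill each mismatch between the two cones before gluing, using the fact that each such mismatch is itself a Lipschitz slice map from a polyhedral $(k-2)$-sphere into $\Sol_{2n-1}$. The resulting $\tau$ is a polyhedral ball whose combinatorial complexity depends only on $n$ and $k$, not on $\sigma$; since each step --- base-case interpolation, Hadamard extension within a slice, inductive matching fill --- costs only a dimensional multiplicative factor in the Lipschitz constant, we obtain $\Lip(g)\le c\Lip(f)$ with $c$ depending only on $n$.
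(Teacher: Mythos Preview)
Your coning-toward-a-point scheme has a genuine gap at the top cells. Grant that you can define the cone over the $(k-2)$-skeleton of $\sigma$ as a slice map (using the cases $k'<k$). For a top cell $\delta$ of $\sigma$, the boundary of $C\delta$ is then $f(\delta)$ glued to this already-built cone over $\partial\delta$: a polyhedral $(k-1)$-sphere carrying a slice map. You want to fill it by a Hadamard cone inside a single $k$-slice $T_\delta\supset S_\delta$, but the cone over $\partial\delta$ has no reason to lie in $T_\delta$; its cells were produced by the inductive hypothesis and sit in their own slices, unrelated to your choice of $T_\delta$. Your ``mismatch fill'' via the $(k-1)$-case inserts further $(k-1)$-cells, and those again live in their own slices rather than in $T_\delta$, so you still cannot invoke the Hadamard extension there. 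What you are left with is precisely the statement of the lemma for the same $k$, and the induction does not close. (For $k=2$ the issue is already visible: the base-case paths from the endpoints of an edge $e$ to $p$ are concatenations of $1$-slice arcs that need not sit in any $2$-slice containing $f(e)$.)

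The paper avoids this by never coning to a point in one step. Fix $\vec{x}=f(v_*)$ and let $p_i$ be the horocycle projection of the $i$th $\Hyp_2$-factor onto the vertical geodesic through $x_i$. Each $p_i$ is distance-decreasing, fixes $\vec{x}$, and sends $d$-slices to $d$-slices; the crucial point is that for any cell $\delta$, the set $f(\delta)\cup p_i(f(\delta))$ lies in a single $(\dim\delta+1)$-slice, and these minimal slices \emph{nest along face inclusions}. Lemma~\ref{lem:solSegments} exploits this nesting to build, skeleton by skeleton, a slice-map homotopy $\sigma\times[0,1]\to\Sol_{2n-1}$ from $f$ to $p_i\circ f$ with no coherence problem: each $\delta\times[0,1]$ is Hadamard-filled inside the minimal slice containing $f(\delta)\cup p_i(f(\delta))$, which automatically contains the fills over $\partial\delta\times[0,1]$. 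Concatenating the $n$ homotopies pushes $f$ into the $0$-slice through $\vec{x}$, where a final Hadamard cone finishes. Thus $\tau=\sigma\times[0,n]\cup C\sigma$, fixed independently of $f$.
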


The basic idea of the lemma is to first construct a family of
projections along horospheres whose images lie in $(n-1)$-slices, then
construct homotopies between $f$ and its projections.  Gluing these
homotopies together will give a map $\sigma\times[0,n]\to
\Sol_{2n-1}$, and adding a final contraction will extend the map to
all of $\tau$.

Let $\beta:\Hyp_2\to \R$ be the Busemann function used to define
$\Sol_{2n-1}$.  If $\gamma$ is a vertical geodesic in $\Hyp_2$ and
$x\in \Hyp_2$, let $p(x)$ be the unique point on $\gamma$ such that
$\beta(x)=\beta(p(x))$.  This defines a map $p_\gamma:\Hyp_2\to \gamma$.  It
is straightforward to check that $p$ is distance-decreasing and that
$d(x,p(x))\le 2 d(x,\gamma)$.

Suppose that $\vec{x}=(x_1,\dots,x_n)\in (\Hyp_2)^n$.  For $i=1,\dots,
n$, let $\gamma_i$ be a vertical geodesic containing $x_i$, and let
$\beta:\Hyp_2\to \R$ be the Busemann function used to define
$\Sol_{2n-1}$.  For each $i$, let $p_i:\Sol_{2n-1}^n\to \Sol_{2n-1}$ be
the map 
$$p_i(y_1,\dots,y_n)=(y_1,\dots, y_{i-1},p_{\gamma_i}(y_i),y_{i+1},\dots, y_n).$$
Let $S$ be the $0$-slice
$$S=\gamma_1\times\dots\times \gamma_n\cap \Sol_{2n-1}$$
and let $S_i$ be the $(n-1)$-slice
$$S_i=\Hyp_2\times\dots\times \gamma_i\times \dots \times \Hyp_2\cap
\Sol_{2n-1},$$ 
where $\gamma_i$ occurs in the $i$th factor.
It is easy to check the following properties:
\begin{itemize}
\item $p_i$ is distance-decreasing
\item $d(y,p_i(y))\le 2d(y,\vec{x})$ for all $y\in \Sol_{2n-1}$
\item $p_i$ preserves $S$ pointwise
\item If $S'$ is a $d$-slice, then $p_i(S')$ lies in a $d$-slice and
  $S'$ and $p_i(S')$ both lie in the same $(d+1)$-slice.  In
  particular, $y$ and $p_i(y)$ lie in a $1$-slice for every $y\in
  \Sol_{2n-1}$.
\end{itemize}
Then:
\begin{lemma}\label{lem:solSegments}
  For any $i$, if $\sigma$ is a polyhedral complex with $\dim
  \sigma<n$, $f:\sigma\to \Sol_{2n-1}$ is a Lipschitz slice map, and
  $s\in \sigma$ satisfies $f(s)=\vec{x}$, then there is a homotopy
  $g:\sigma\times[0,1]\to \Sol_{2n-1}$ from $f$ to $p_i\circ f$ which
  is a Lipschitz slice map with $\Lip(g)\lesssim \Lip(f)$.
\end{lemma}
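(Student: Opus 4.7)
The plan is to construct the homotopy $g$ cell-by-cell, using geodesic interpolation inside an appropriate slice that contains both $f(\delta)$ and $p_i\circ f(\delta)$ for each cell $\delta$ of $\sigma$, and to glue these pieces using total geodesicity of sub-slices.

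For each $d$-cell $\delta$ of $\sigma$, let $S_\delta = s_1\times\cdots\times s_n\cap\Sol_{2n-1}$ be the $d$-slice containing $f(\delta)$. I would choose a slice $S'_\delta$ of dimension at most $d+1$ containing both $f(\delta)$ and $p_i(f(\delta))$ as follows: if $s_i$ equals $\Hyp_2$ or $\gamma_i$, then $p_i(S_\delta)\subset S_\delta$ and I set $S'_\delta=S_\delta$; otherwise $s_i$ is some vertical geodesic $\gamma_i'\ne\gamma_i$, and I take $S'_\delta$ to be the $(d{+}1)$-slice obtained from $S_\delta$ by replacing $s_i$ with $\Hyp_2$ (this is exactly the fourth bullet point about $S'$ and $p_i(S')$ lying in a common $(d{+}1)$-slice). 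These choices can be made compatibly so that $\delta'\subset\delta$ implies $S'_{\delta'}\subset S'_\delta$, with $S'_{\delta'}$ totally geodesic inside $S'_\delta$.

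Each $S'_\delta$ is bilipschitz equivalent to a Hadamard manifold, so I would define $g$ on $\delta\times[0,1]$ by sending $(x,t)$ to the point at parameter $t$ on the geodesic (in $S'_\delta$) from $f(x)$ to $p_i(f(x))$. Total geodesicity of the sub-slices guarantees that these cell-wise definitions agree on overlaps and patch together to a slice map $g:\sigma\times[0,1]\to\Sol_{2n-1}$ with $g(\cdot,0)=f$ and $g(\cdot,1)=p_i\circ f$. The Lipschitz estimate follows from the CAT(0) structure of each $S'_\delta$: in the horizontal direction, convexity of distance together with the fact that $p_i$ is $1$-Lipschitz yields $\Lip(g(\cdot,t))\le\Lip(f)$; in the vertical direction, $d(f(x),p_i(f(x)))\le 2d(f(x),\vec{x})\le 2\Lip(f)\cdot\diam(\sigma)\lesssim\Lip(f)$, so the vertical speed is also controlled by $\Lip(f)$ (with implicit constant depending on $\diam\sigma$, which is bounded in the intended application).

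The main obstacle is to make the slice selections $S'_\delta$ canonical enough to be compatible across all faces of $\sigma$, and to carefully work inside the bilipschitz-equivalent Hadamard manifold rather than with an a priori notion of geodesic in the slice. Once these points are handled, the slice-map property and the Lipschitz bound follow directly from the distance-decreasing, displacement-bounding, and slice-preserving properties of $p_i$ listed before the lemma.
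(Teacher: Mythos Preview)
Your plan differs from the paper's in a way that creates a real gap. The paper does not attempt a single-formula geodesic interpolation; instead it builds $g$ skeleton by skeleton. On the $0$-skeleton of $\sigma\times[0,1]$ the values $f(v)$ and $p_i(f(v))$ are already given, and the bound $d(f(v),p_i(f(v)))\le 2d(f(v),\vec{x})\le 2\Lip(f)\diam\sigma$ shows this is Lipschitz. Then, for each $(d-1)$-cell $\delta$ of $\sigma$, once $g$ is defined on $\partial(\delta\times[0,1])$, the paper observes that this boundary image lies in the \emph{minimal} slice $S'$ containing $f(\delta)\cup p_i(f(\delta))$ (using the inductive hypothesis for the $\partial\delta\times[0,1]$ part), and invokes Lemma~\ref{lem:negCurvExtend} abstractly to extend over $\delta\times[0,1]$ with image in $S'$. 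No canonical geodesic is ever needed; only the fact that each $S'$ is bilipschitz to a Hadamard manifold and hence Lipschitz $k$-connected for all $k$.

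Your geodesic-interpolation scheme, by contrast, requires that the geodesic in $S'_\delta$ from $f(x)$ to $p_i(f(x))$ coincide with the geodesic in $S'_{\delta'}$ whenever $x\in\delta'\subset\delta$. But the paper only asserts that a $k$-slice is \emph{bilipschitz} to a Hadamard manifold, via projection onto all but one vertical-geodesic factor; it does not claim the induced metric is CAT(0). The Hadamard structure therefore depends on which factor you drop, and different cells force different choices: if $S'_{\delta_1}$ has only factor $j_1$ vertical and $S'_{\delta_2}$ has only factor $j_2$ vertical with $j_1\ne j_2$, a common face $\delta'$ inherits two distinct geodesic structures, and the interpolated paths need not agree. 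Your acknowledgment of this as ``the main obstacle'' is correct, but you do not resolve it, and there is no evident global choice of projection. The skeleton-by-skeleton argument is precisely what lets the paper avoid this compatibility problem: the extension over each $\delta\times[0,1]$ is made once, after its entire boundary is already fixed, so nothing needs to match up after the fact.
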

\begin{proof}
  We construct $g$ one skeleton at a time.  For any cell $\delta$, the
  image $g(\delta\times [0,1])$ will be contained in the minimal slice
  that contains $f(\delta)$ and $p_i(f(\delta))$.  Since $f(\delta)$
  and $p_i(f(\delta))$ lie in a common $(\dim \delta+1)$-slice, this
  ensures that $g$ is a slice map.

  The map $g$ is already defined on the vertices of $\sigma\times
  [0,1]$ and we claim that it is Lipschitz on the $0$-skeleton.  If
  $l=\Lip(f)$, then $f$ and $p_i\circ f$ are $l$-Lipschitz, and if $v$
  is a vertex of $\sigma$, then
  $$d(f(v),p_i(f(v)))\le 2 d(f(v),\vec{x}) \le 2 l\diam \sigma,$$
  so $g$ is Lipschitz on the vertex set with Lipschitz constant
  $\lesssim l$.

  Now suppose that we have defined $g$ on the $(d-1)$-cells of
  $\sigma\times [0,1]$ and that for any $(d-2)$-cell $\delta$, the
  image $g(\delta\times [0,1])$ is contained in the minimal slice
  that contains $f(\delta)$ and $p_i(f(\delta))$.  Consider a
  cell of the form $\delta\times [0,1]$ for some $(d-1)$-cell $\delta$
  in $\sigma$.  Since $f$ is a slice map, $f(\delta)$ lies
  in some $(d-1)$-slice, so $f(\delta)\cup p_i(f(\delta))$ lies in
  some $d$-slice, and this $d$-slice also contains $g(\partial
  \delta\times [0,1])$ by the inductive hypothesis.  Let $S'$ be the
  minimal slice that contains
  $$g(\partial(\delta\times[0,1]))=f(\delta)\cup
  p_i(f(\delta))\cup g(\partial \delta\times [0,1]).$$ By
  Lemma~\ref{lem:negCurvExtend}, we can extend $g$ over $\delta\times
  [0,1]$ so that it sends $\delta\times [0,1]$ to $S'$.  The extension
  is Lipschitz and the Lipschitz constant is $\lesssim \Lip f$.
\end{proof}

Now we can prove Lemma~\ref{lemma:sliceExtensions}.
\begin{proof}[Proof of Lemma~\ref{lemma:sliceExtensions}]
  Let $\tau$ be the complex $\sigma\times[0,n]\cup C\sigma/\sim$,
  where $[0,n]$ is subdivided into $n$ unit-length edges, $C\sigma$
  is the cone over $\sigma$ and $\sim$ is the relation gluing the base
  of $C\sigma$ to $\sigma\times\{n\}$.  This is bilipschitz equivalent
  to $D^k$.

  Choose a basepoint $v_*\in \sigma$ and suppose that
  $f(v_*)=\vec{x}=(x_1,\dots,x_n)\in (\Hyp_2)^n$.  For $i=0,\dots, n$,
  let $f_{i}=p_{i}\circ\dots\circ p_1\circ f$.  By
  Lemma~\ref{lem:solSegments}, for $i=1,\dots, n$, there is a homotopy
  $g_i:\sigma\times[i-1,i]\to \Sol_{2n-1}$ from $f_{i-1}$ to $f_i$
  which is a Lipschitz slice map with $\Lip(g_i)\lesssim
  \Lip(f)$.  
  Concatenating the $g_i$'s gives a map $\sigma\times [0,n]\to
  \Sol_{2n-1}$ which is a Lipschitz homotopy from $f$ to $f_{n}$.  To
  define $g$, it suffices to extend this map over $C\sigma$, but since
  the image of $f_n$ lies in $S$, we can use
  Lemma~\ref{lem:negCurvExtend} to construct such an extension.  Since
  this extension lies in a 0-slice, it is a slice map, so $g$ is a
  slice map and $\Lip(g)\lesssim \Lip(f)$.
\end{proof}

Lemma~\ref{lem:infSimplex} follows easily:
\begin{proof}[Proof of Lemma~\ref{lem:infSimplex}]
  Let $\Delta_d$ be the standard $d$-simplex.  We define a sequence
  $\tau_i, i=0,\dots,n-1$ of polyhedral complexes homeomorphic to
  $\Delta_i$ and a sequence $\sigma_i, i=0,\dots,n-1$ of polyhedral
  complexes homeomorphic to $\partial \Delta_{i+1}$ inductively.  Let
  $\tau_0$ be a single point.  For each $i\ge 0$, let $\sigma_i$ be
  the complex obtained by replacing each $i$-face of $\partial
  \Delta_{i+1}$ by a copy of $\tau_i$.  Let $\tau_{i+1}$ be the
  complex obtained by applying Lemma~\ref{lemma:sliceExtensions} to
  $\sigma_i$.  This is PL-homeomorphic to $\Delta_i$ and has boundary
  $\sigma_i$.
  
  Let $\Delta'$ be the complex obtained by subdividing each
  $d$-simplex of $\Delta^{(n-1)}$ into a copy of $\tau_d$ and let
  $i:\Delta^{(n-1)}\to \Delta'$ be a bilipschitz equivalence taking
  each simplex to the corresponding copy of $\tau_d$.  We can
  construct a slice map $\Omega':\Delta'\to Z$ by defining
  $\Omega'(\langle x \rangle)=x$ for all $x\in \Sol_{2n-1}$ and using
  Lemma~\ref{lemma:sliceExtensions} inductively to extend $\Omega'$
  over each of the $\tau_d$'s.

  That is, if $\delta=\langle x_0,\dots,x_{d+1} \rangle\subset \Delta$ is
  a $(d+1)$-cell, $\Omega'$ is defined on $i(\partial\delta)$, and   
  $$\Omega|_{i(\partial\delta)}:\sigma_d\to \Sol_{2n-1}$$
  is a slice map with Lipschitz constant $\lesssim \diam
  \{x_0,\dots,x_{d+1}\}$, we may extend it to a slice map on
  $i(\delta)$ using Lemma~\ref{lemma:sliceExtensions}.  The resulting
  map $\Omega|_\delta$ has
  $$\Lip(\Omega|_\delta) \lesssim \diam \{x_0,\dots,x_{d+1}\}$$
  as desired.
\end{proof}

Thus, by Lemma~\ref{lem:LipToInfSimp}, $\Sol_{2n-1}$ is Lipschitz
$(n-2)$-connected, and by Theorem~\ref{thm:LipToLMR}, it is
undistorted up to dimension $n$ inside $\Hyp_2^n$.  Consequently, if
$k<n$ and if $\alpha$ is a Lipschitz $k$-cycle in $\Sol_{2n-1}$,
then 
$$\FV_{\Sol_{2n-1}}^{k+1}(\alpha)\lesssim
\FV_{\Hyp_2^n}^{k+1}(\alpha)\lesssim (\mass \alpha)^{(k+1)/k},$$
as desired.

\section{Fillings in horospheres of euclidean buildings}

In this section, we prove Theorem~\ref{thm:mainBuildings}.  

We claim:
\begin{thm}\label{thm:horosphereLipConn}
  Let $X$ be a thick euclidean building and let $X_\infty$ be the
  Bruhat-Tits building of $X$.  If $X$ is reducible, then $X_\infty$
  is a join of buildings; let $\tau$ be a point in $X_\infty$ which has rational slope and is
  not contained in a join factor of rank less than $n$.  Let $Z$ be a
  horosphere in $X$ centered at $\tau$ and let $p:X_\infty\to
  M(X_\infty)$ be the projection of $X_\infty$ to its model chamber.
  Then $Z$ is Lipschitz $(n-2)$-connected with implicit constant
  depending only on $X$ and $p(\tau)$.
\end{thm}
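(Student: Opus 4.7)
The plan is to mirror the strategy used for $\Sol_{2n-1}$ in Section~\ref{sec:sol}, replacing the role of the factors of $(\Hyp_2)^n$ with suitable building-theoretic ingredients. By Lemma~\ref{lem:LipToInfSimp}, it suffices to construct a map $\Omega:\Delta_Z^{(n-1)}\to Z$ on the $(n-1)$-skeleton of the infinite-dimensional simplex with vertex set $Z$, with $\Omega(\langle z\rangle)=z$ and $\Lip\Omega|_{\langle z_0,\dots,z_d\rangle}\lesssim \diam\{z_0,\dots,z_d\}$. The entire framework of Lemma~\ref{lemma:sliceExtensions} and Lemma~\ref{lem:infSimplex} will carry over once we identify the correct notion of $k$-slice and construct analogs of the projections $p_i$.

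First, I would decompose $X$ as a product $X_1\times\cdots\times X_m$ of irreducible thick euclidean buildings, inducing a join decomposition $X_\infty=X_{1,\infty}*\cdots*X_{m,\infty}$ and a decomposition $\tau=\tau_1*\cdots*\tau_m$. The rational-slope hypothesis passes to each $\tau_i$, and the assumption that $\tau$ is not contained in a join factor of rank less than $n$ guarantees that the sum of ranks of the factors in which $\tau_i$ is nondegenerate is at least $n$. Because the horosphere $Z$ splits compatibly (up to a Busemann rescaling in each factor), this lets me select $n$ ``independent'' rational directions in an apartment $E$ through a ray to $\tau$, each associated with a root/translation of the apartment that is compatible with the discrete building structure. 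For each such direction I define a $k$-slice exactly as in Section~\ref{sec:sol}: the intersection of $Z$ with a $(k+1)$-dimensional sub-flat (through a chosen ray to $\tau$) determined by fixing $n-k$ of the independent directions and leaving the remaining $k$ free. Each slice lies in a single apartment, hence inherits the structure of a CAT(0) Hadamard-type space, and so is Lipschitz $d$-connected for every $d$ by Lemma~\ref{lem:negCurvExtend}.

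Next, I would construct the projections $p_i:Z\to Z$. Given a basepoint $\vec{x}\in Z$ and the $i$th distinguished direction, there is a unique geodesic $\gamma_i$ through $\vec{x}$ in its direction and converging to $\tau$; I define $p_i$ by sending a point $y$ to the unique point of an $(n-1)$-slice through $\vec{x}$ with the same Busemann value along the $i$th direction. Rational slope ensures that this projection is well-defined and that $y$ and $p_i(y)$ always lie in a common $1$-slice through $\vec{x}$; the CAT(0) property of apartments ensures $p_i$ is distance-nonincreasing and that $d(y,p_i(y))\le 2 d(y,\vec{x})$, paralleling the bullet list just before Lemma~\ref{lem:solSegments}. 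With these projections, the analog of Lemma~\ref{lem:solSegments} is obtained by constructing homotopies within the slice that contains $f(\delta)\cup p_i(f(\delta))$, and the analog of Lemma~\ref{lemma:sliceExtensions} follows by concatenating the $n$ homotopies and coning off inside a $0$-slice. Finally, Lemma~\ref{lem:infSimplex} applies verbatim to produce $\Omega$ with the required Lipschitz bounds, so $Z$ is Lipschitz $(n-2)$-connected with constants depending only on $X$ and on the combinatorial position $p(\tau)$ of $\tau$ in the model chamber (the latter controls the root data and hence the constants in the projections).

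The main obstacle is the third step, specifically verifying that the projections $p_i$ exist and behave as in the $\Sol$ setting for an irreducible $X$ of higher rank, where apartments are not products. In that case one must choose the $n$ independent directions inside the Weyl chamber at $\tau$ using the rational root system of the apartment; the key input is that a rational-slope ray to $\tau$ in any apartment is stabilized by a cocompact lattice of translations, so fixing a root direction gives a well-defined, $1$-Lipschitz projection of apartments onto subapartments which glues together across the building to a $1$-Lipschitz projection on $X$. A secondary subtlety is that when two simplices of $Z$ lie in different apartments through $\tau$, one must check that the slices and homotopies can be chosen consistently; this uses the standard retraction from $X$ onto an apartment containing a subsector to $\tau$, which in the euclidean building is $1$-Lipschitz on sets containing such a subsector. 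Once these technical points are in place, the inductive construction of $\Omega$ follows the pattern of Section~\ref{sec:sol} without further modification.
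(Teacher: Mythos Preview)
Your proposal has a genuine gap in the irreducible case, which you correctly flag as the main obstacle but do not resolve. The slice-and-projection machinery of Section~\ref{sec:sol} works because $(\Hyp_2)^n$ is a \emph{product}: the projections $p_i$ modify a single coordinate while preserving the Busemann sum, and the $k$-slices are products of factors with vertical geodesics, hence convex Hadamard submanifolds of the ambient space. None of this survives in an irreducible thick euclidean building $X$ of rank $n\ge 2$.

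Concretely: your proposed $p_i$ would project $y\in Z$ onto an $(n-1)$-flat inside some apartment containing $\vec{x}$, but $y$ need not lie in any apartment containing that flat, and there is no globally defined $1$-Lipschitz map $X\to X$ that realizes ``orthogonal projection onto a wall'' compatibly in every apartment. The retraction $\rho_{E,\fa}$ you invoke is a folding map onto a fixed apartment, not a projection onto a codimension-$1$ slice; it does not place $y$ and $p_i(y)$ in a common $1$-slice, and it does not preserve the Busemann value along a transverse root direction. Likewise, your $k$-slices are affine subspaces of a single apartment; they are not convex in $X$ and do not contain enough of $Z$ to support the induction, since two points of $Z$ at distance $\ell$ need not lie in any common apartment. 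The claim that root-direction projections ``glue together across the building to a $1$-Lipschitz projection on $X$'' is exactly what fails.

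The paper's argument is structurally different and does not attempt to mimic a product. It fixes a chamber $\fa\subset X_\infty$ containing $\tau$, studies the set $\xop$ of chambers opposite to $\fa$, and proves $\xop$ is $(n-2)$-connected by a quantitative Morse-theoretic argument using $\fa$-characteristic chambers and ramifications of apartments (Lemmas~\ref{lem:superOppSpecial}, \ref{lem:superOpp}, \ref{lem:xopConnected}). It then builds $\Omega_\infty:\Delta_Z^{(n-1)}\to\xop$ together with basepoints $x_\Delta\in X$ controlling heights (Lemma~\ref{lem:constructOmegaInf}), and converts directions at infinity into points of $Z$ via maps $i_x:\dlk(x)\to Z$ that shoot rays from $x$ until they hit the horosphere (Section~\ref{sec:constructingOmega}). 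This yields only a coarse bound $\Lip\Omega|_\Delta\lesssim\diam\V(\Delta)+1$ (Lemma~\ref{lem:CoarseInfSimp}); the rational-slope hypothesis is used separately, via Lemma~\ref{lem:NbhdRetraction}, to remove the additive constant. The essential input replacing your product structure is the abundance of apartments in $\xop$, a genuinely building-theoretic fact with no analogue in the $\Sol$ argument.
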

By Theorem~\ref{thm:LipToLMR} and Corollary~\ref{cor:LipToLMR}, this
implies Theorem~\ref{thm:mainBuildings}.

Furthermore, if $K$ is a global function field, $\mathbf{G}$ is a
noncommutative, absolutely almost simple $K$-group of $K$-rank 1, and
$S$ is a finite set of pairwise inequivalent valuations on $K$, then
$\Gamma=\mathbf{G}(\mathcal{O}_S)$ is an $S$-arithmetic group.  If $X$
is the associated euclidean building and $n$ is its rank, then by
Theorem~3.7 of \cite{BuxWortConnectivity}, there is a collection
$\mathcal{H}$ of pairwise disjoint open horoballs in $X$ such that
$X\smallsetminus \mathcal{H}$ is $\mathbf{G}(\mathcal{O}_S)$-invariant
and cocompact.  By Theorem~\ref{thm:horosphereLipConn}, the boundary
of each of these horoballs is Lipschitz $(n-2)$-connected with a
uniform implicit constant, so Theorem~\ref{thm:LipToLMR} implies
Theorem~\ref{thm:SArith}.

As in \cite[Rem.\ 4.2]{DrutuFilling}, it suffices to consider the case
that $X$ is a thick euclidean building of rank $n$ and that $\tau$ is
not parallel to any factor of $X$.  If $X=X_1\times X_2$, then
$X_\infty=(X_1)_\infty\ast (X_2)_\infty$.  If $\tau\in
(X_1)_\infty$, then $Z=Z_1\times
X_2$, where $Z_1\subset X_1$ is a horosphere of $X_1$ centered at
$\tau$.  If $\alpha:S^k\to Z$ is $c$-Lipschitz, we can replace it with
its projection to $Z_1$ by using an homotopy with Lipschitz constant
$\lesssim c$, so if $Z_1$ is Lipschitz $(n-2)$-connected, so is $Z$.

Therefore, in this section, we will let $X$ be a thick euclidean
building of rank $n$ equipped with its complete apartment system, and
let $X_\infty$ be its Bruhat-Tits building.  We fix a direction at
infinity $\tau\in X_\infty$ which is not contained in any factor of
$X_\infty$, and let $h$ be a Busemann function centered at $\tau$,
with corresponding horosphere $Z=h^{-1}(0)$.  We orient $h$ so that
$h(x)$ increases as $x$ approaches $\tau$; we use this orientation so
that we can treat $h$ as a Morse function on $X$ more easily.

All the constants in this section and its subsections will depend on $X$ and $Z$.

The proof that $Z$ is Lipschitz $(n-2)$-connected is based on
Lemma~\ref{lem:LipToInfSimp}.  Let $\Delta_Z$ be the
infinite-dimensional simplex with vertex set $Z$, and let
$\Delta_Z^{(k)}$ be its $k$-skeleton.  We will show:
\begin{lemma}\label{lem:CoarseInfSimp}
  There exists a map $\Omega:\Delta_Z^{(n-1)}\to Z$ such that
  \begin{enumerate}
  \item For all $z\in Z$, $\Omega(\langle z\rangle)=z$.
  \item For any $d\le n+1$ and any simplex
    $\delta=\langle z_0,\dots,z_d\rangle$, we have
    $$\Lip \Omega|_{\delta}\lesssim  \diam \{z_0,\dots,z_d\}+1.$$
  \end{enumerate}
\end{lemma}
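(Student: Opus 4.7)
The plan is to mirror the proof of Lemma~\ref{lem:infSimplex} for $\Sol_{2n-1}$, replacing the $k$-slices of $\Sol_{2n-1}$ with building-theoretic analogues inside $Z$. Since $\tau\in X_\infty$ has rational slope and is not contained in a factor of rank $<n$, one can choose, for each basepoint $\vec{x}\in Z$, an apartment $A$ containing a geodesic ray from $\vec{x}$ to $\tau$ together with an (essentially) orthogonal decomposition of $A$ into $n$ rank-$1$ factors whose product structure is respected by $\tau$. A $k$-slice based at $\vec{x}$ will be the intersection of $Z$ with the parallel set of an $(n-k)$-dimensional subflat of $A$ through $\vec{x}$ spanned by rays toward $\tau$. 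Each such slice is a CAT$(0)$ subspace of rank at most $k$, hence by convexity of the distance function it is Lipschitz $d$-connected for every $d$ (cf.\ Lemma~\ref{lem:negCurvExtend}); $\tau$'s not lying in a low-rank factor guarantees that $n$-many independent rank reductions are available.

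Next, I would introduce building analogues of the projection maps $p_i$. For each chosen $1$-dimensional factor of $A$, there is a coarsely well-defined projection $p_i:Z\to Z$ along horocycles of that factor that moves a point $y$ by a distance $\lesssim d(y,\vec{x})+1$, is coarsely $1$-Lipschitz, and sends $d$-slices into $d$-slices that are parallel in a common $(d{+}1)$-slice. The \emph{slice map} concept carries over verbatim, and the analog of Lemma~\ref{lem:solSegments} is then obtained skeletonwise: given a slice map $f:\sigma\to Z$ and a basepoint $\vec{x}=f(s)$, one builds a homotopy $\sigma\times[0,1]\to Z$ from $f$ to $p_i\circ f$ by filling each cell $\delta\times[0,1]$ inside the minimal slice containing $f(\delta)\cup p_i(f(\delta))$, using CAT$(0)$ coning for the extension. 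Iterating this construction yields the building analog of Lemma~\ref{lemma:sliceExtensions}: a slice map $f:\sigma\to Z$ on a $(k-1)$-sphere extends to a slice map $g:\tau\to Z$ on a $k$-disk $\tau=\sigma\times[0,n]\cup_\sim C\sigma$, with $\Lip(g)\lesssim \Lip(f)+1$; the ``$+1$'' absorbs the bounded-geometry cost of moving a basepoint into the chosen apartment.

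Once this extension lemma is in hand, the construction of $\Omega$ is literally the same as in the proof of Lemma~\ref{lem:infSimplex}: build a sequence $\tau_0,\tau_1,\ldots,\tau_{n-1}$ of polyhedral models for $\Delta_0,\ldots,\Delta_{n-1}$ by repeated application of the extension lemma, subdivide each $d$-simplex of $\Delta_Z^{(n-1)}$ into a copy of $\tau_d$, and extend $\Omega$ inductively over skeleta, using the extension lemma on each new simplex $\delta=\langle z_0,\dots,z_{d+1}\rangle$. The bound $\Lip(\Omega|_\delta)\lesssim \diam\{z_0,\dots,z_{d+1}\}+1$ follows from the slice-extension bound, since at each inductive step the ambient Lipschitz constant incurs a multiplicative constant and an additive $O(1)$.

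The main obstacle, compared with the $\Sol_{2n-1}$ argument, is that $X$ has no smooth product structure: the apartment $A$ through $\vec{x}$, its orthogonal factors, and the corresponding projections $p_i$ are only defined up to choices that may vary discontinuously with $\vec{x}$, and slices themselves are defined via parallel sets rather than as intrinsic subgroups. The delicate points are therefore (i) producing projections $p_i$ that interact compatibly across different basepoints so that the inductive homotopies glue to slice maps, and (ii) handling simplices whose diameter is smaller than the scale of a single cell of $X$, where the notion of slice degenerates; this is precisely where the additive $+1$ in the Lipschitz bound appears, and one handles it by mapping very small simplices into a single cell via the local Lipschitz connectivity of $X$ itself.
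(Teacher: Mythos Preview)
Your proposal has a genuine structural gap. The slice-and-projection strategy of Section~\ref{sec:sol} works because $(\Hyp_2)^n$ is literally a product of $n$ rank-one spaces: the $k$-slices are products of factors, and the projections $p_i$ are coordinatewise maps that commute and preserve this product structure. A general euclidean building has no such decomposition. After the reduction at the start of the section one is working with an $X$ of rank $n$ in which $\tau$ is not parallel to any factor; in the irreducible case (e.g.\ type $\tilde A_{n-1}$) an apartment is $\R^{n-1}$ with an irreducible Coxeter structure, and there is no orthogonal splitting into rank-one pieces compatible with the wall system. Consequently there is no family of commuting horocycle projections $p_1,\dots,p_n$ with the slice-preserving properties you need, and the hypothesis that $\tau$ avoid low-rank factors, being vacuous here, cannot manufacture the ``$n$ independent rank reductions.'' Your proposed slices---intersections of $Z$ with parallel sets of subflats---also do not behave as claimed: the parallel set of a generic subflat is not a sub-building, and even for a wall the resulting object is not a CAT(0) space of strictly lower rank in the way the $\Sol$ argument requires.

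The paper's proof takes a completely different route, working through the spherical building at infinity rather than through any product structure in $X$. One fixes a chamber $\fa\subset X_\infty$ containing $\tau$, proves that the set $\xop$ of chambers opposite to $\fa$ is $(n-2)$-connected (Lemma~\ref{lem:xopConnected}), and builds a map $\Omega_\infty:\Delta_Z^{(n-1)}\to\xop$ together with basepoints $x_\Delta\in X$ so that $\Omega_\infty(\Delta)$ lies in a ``downward link at infinity'' $\dlk(x_\Delta)$ and $h(x_\Delta)\lesssim\diam\V(\Delta)+1$ (Lemma~\ref{lem:constructOmegaInf}). One then pushes $\Omega_\infty$ down to $Z$ via maps $i_x:\dlk(x)\to Z$ sending a direction $\sigma$ to the point where the ray from $x$ toward $\sigma$ meets $Z$; since $\Lip i_x\lesssim h(x)$, the control on $h(x_\Delta)$ yields $\Lip\Omega|_\delta\lesssim\diam\V(\delta)+1$. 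The combinatorial substitute for your slices is the supply of apartments inside $\xop$ produced by the ramification argument of Lemmas~\ref{lem:superOppSpecial}--\ref{lem:superOpp}, which is specific to thick buildings and has no analogue in your outline.
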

The only difference between the map in
Lemma~\ref{lem:CoarseInfSimp} and the map in
Lemma~\ref{lem:LipToInfSimp} is the bound on $\Lip \Omega|_{\delta}$.
In Lemma~\ref{lem:LipToInfSimp}, $\Lip \Omega|_{\delta}$ is bounded by
a multiple of $\diam \{z_0,\dots,z_d\}$; in
Lemma~\ref{lem:CoarseInfSimp}, it is bounded by a multiple of
$\diam \{z_0,\dots,z_d\}$ and an additive constant.

As a corollary, we have:
\begin{lemma}\label{lem:NbhdRetraction}
  For any $t>0$, there is a Lipschitz map $r_t:h^{-1}((\infty,t])\cap
  X^{(n-1)} \to Z$ which restricts to the identity map on $Z$.
\end{lemma}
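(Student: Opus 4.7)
The plan is to construct $r_t$ as a composition of two Lipschitz retractions, with extensions over cells supplied by $\Omega$ from Lemma~\ref{lem:CoarseInfSimp}.

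Since $X$ is CAT(0) and the horoball $h^{-1}([0,\infty))$ is convex, nearest-point projection $\pi\colon X \to h^{-1}([0,\infty))$ is $1$-Lipschitz; it fixes the horoball and sends $h^{-1}((-\infty,0])$ onto $Z = \partial h^{-1}([0,\infty))$. In particular, $\pi|_Z = \mathrm{id}_Z$, and $\pi$ maps the domain $h^{-1}((-\infty,t]) \cap X^{(n-1)}$ into the slab $h^{-1}([0,t]) \cap X^{(n-1)}$. It therefore suffices to build a Lipschitz retraction $s\colon h^{-1}([0,t]) \cap X^{(n-1)} \to Z$ with $s|_Z = \mathrm{id}_Z$, and then set $r_t := s \circ \pi$.

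Since $\tau$ has rational slope we may assume $Z$ is a subcomplex of $X^{(n-1)}$, so the slab inherits a polyhedral structure whose cells have uniformly bounded diameter. For each vertex $v$ of the slab I choose $\rho(v) \in Z$: set $\rho(v) = v$ if $v \in Z$, otherwise let $\rho(v)$ be the horospherical projection of $v$ away from $\tau$ inside an apartment containing $v$ and $\tau$, so that $d(v,\rho(v)) = h(v) \le t$. This makes $\rho$ Lipschitz on the $0$-skeleton with constant $\lesssim t+1$. I then extend cell-by-cell via $\Omega$: for a cell $\sigma$ of dimension $k \le n-1$ with vertices $v_0, \ldots, v_k$, I fix a bilipschitz identification of $\sigma$ with the standard $k$-simplex and set $s|_\sigma := \Omega(\langle \rho(v_0), \ldots, \rho(v_k)\rangle)$. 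Property (1) of Lemma~\ref{lem:CoarseInfSimp} gives agreement with $\rho$ at vertices and automatic consistency across shared faces, while property (2) bounds the Lipschitz constant on each cell by $\diam\{\rho(v_0), \ldots, \rho(v_k)\} + 1 \lesssim t+1$, so $s$ is globally Lipschitz on cells of bounded diameter.

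The main obstacle is enforcing $s|_Z = \mathrm{id}_Z$ exactly, because $\Omega$ applied to a simplex $\langle v_0, \ldots, v_k\rangle$ whose vertices lie in $Z$ is only some Lipschitz map into $Z$ and need not be the inclusion of the corresponding cell of $Z$. I plan to handle this by defining $s$ as the inclusion directly on cells contained in $Z$ and using the $\Omega$-based formula only on cells not contained in $Z$; for cells having some faces in $Z$, a thin collar of each $Z$-face carries a Lipschitz interpolation between the inclusion and the $\Omega$-based map, built from $\Omega$ applied to auxiliary simplices of the form $\langle v_0, \ldots, v_k, w\rangle$ with $w$ chosen inside $Z$. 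Checking that this collar interpolation remains Lipschitz with a constant depending only on $t$, while staying within the $(n-1)$-skeleton of $\Delta_Z$ (so that $\Omega$ is defined on the auxiliary simplices), is the most delicate step.
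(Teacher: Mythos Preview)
Your approach is essentially the paper's: closest-point projection below $Z$, then an $\Omega$-based extension over a simplicial decomposition of the slab, with rational slope guaranteeing finitely many cell types (hence a lower bound on cell size, so the additive constant in Lemma~\ref{lem:CoarseInfSimp} is harmless). The paper uses nearest-point projection to $Z$ rather than your horospherical projection for the vertex map, but either works.

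Two remarks. First, the composition $s\circ\pi$ is not well-posed as written: nearest-point projection to the horoball does not preserve $X^{(n-1)}$ --- a point in a panel at negative height projects along the ray toward $\tau$, which generically leaves the $(n-1)$-skeleton --- so $\pi$ need not land in the domain of $s$. The paper avoids this by defining $r_t$ piecewise (projection on $h^{-1}((-\infty,0])\cap X^{(n-1)}$, landing directly in $Z$; the $\Omega$-based map on the slab) rather than as a composition; your construction adapts to that format with no real change. Second, you are right to flag $s|_Z=\mathrm{id}_Z$ as the delicate point: the paper's own proof in fact glosses over exactly this, giving two definitions on the overlap $Z\cap X^{(n-1)}$ without checking they agree. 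So here you are being more careful than the paper. Your collar-interpolation plan is the right kind of fix; note also that $Z$ is not contained in $X^{(n-1)}$ (it slices through top-dimensional chambers), so the phrase ``$Z$ is a subcomplex of $X^{(n-1)}$'' overstates what rational slope gives you, and the identity condition really concerns only $Z\cap X^{(n-1)}$.
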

\begin{proof}
  Define $r_t$ on $h^{-1}((\infty,0])$ as the closest-point
  projection.  Since horoballs are convex, this is a
  distance-decreasing map.

  To define $r_t$ on $h^{-1}((0,t])\cap X^{(n-1)}$, we view $X$ as a
  polyhedral complex, i.e., a complex whose faces consist of convex
  polyhedra in $\R^n$, glued along faces by isometries.  Then $h$ is
  linear on each face of $X$, so if $P$ is a face of $X$, then the
  intersections $h^{-1}([0,t])\cap P$, $Z\cap P$, and $h^{-1}(t)\cap
  P$ are convex polyhedra.  Since $\tau$ has rational slope, the set
  $h(X^{(0)})$ of possible values of $h$ on the vertices of $X$ is
  discrete, so only finitely many isometry classes of polyhedra occur
  this way, and we can give $Z_t=h^{-1}([0,t])\cap X^{(n-1)}$ the
  structure of a polyhedral complex with only finitely many isometry
  classes of cells.  We subdivide each cell to make $Z_t$ into a
  simplicial complex.  We define $r_t$ on the vertices of $Z_t$ so
  that $d(r_t(v),v)$ is minimized.  If $\Delta$ is a simplex of $Z_t$
  with vertices $v_0,\dots,v_k$, we define
  $$r_t|_\Delta=\Omega|_{\rangle r_t(v_0),\dots,r_t(v_k)}\rangle.$$
  This gives a Lipschitz map with Lipschitz constant depending on the
  size of the smallest simplex in $Z_t$.
\end{proof}
The proof of this lemma is the only place that we use the assumption
that $\tau$ has rational slope.

Given these lemmas, we prove Theorem~\ref{thm:horosphereLipConn} as
follows:
\begin{proof}[{Proof of Theorem~\ref{thm:horosphereLipConn}}]
  Suppose that $\alpha:S^k\to Z$ is a Lipschitz map.  If
  $\Lip(\alpha)\le 1$, we can extend $\alpha$ to a map $\beta:D^{k+1}\to X$
  by coning $\alpha$ to a point along geodesics in $X$.  Since $X$ is
  CAT(0), $\beta$ is Lipschitz with Lipschitz constant $\sim
  \Lip(\alpha)$.  Furthermore, the image of $\beta$ lies in
  $h^{-1}([-1,1])$, so $r_1\circ \beta:D^{k+1}\to Z$ is an extension
  of $\alpha$ with $\Lip(r_1\circ \beta)\sim \Lip(\alpha)$.

  If $\Lip(\alpha)> 1$, let $L\in \N$ be the smallest integer such
  that $L\ge \Lip(\alpha)$, let $D^{k+1}(L)$ be the cube
  $[0,L]^{k+1}\subset \R^{k+1}$, and let $S^{k}(L)=\partial D^k(L)$.
  We view $\alpha$ as a map $S^k(L)\to Z$ with Lipschitz constant
  $\sim 1$ and try to construct an extension to $D^{k+1}(L)$ with
  Lipschitz constant $\sim 1$.

  As in the proof of Lemma~\ref{lem:LipToInfSimp}, the Whitney
  covering lemma implies that $D^{k+1}(L)$ can be decomposed into a
  union of countably many dyadic cubes such that for each cube $C$,
  one has $\diam C\sim d(C,S^k(L))$.  Since these cubes are dyadic,
  each cube of side length less than one is contained in a cube of
  side length 1.  Let $\mathcal{C}$ be the cover of $D^{k+1}(L)$
  obtained by combining cubes of side length less than 1 into cubes of
  side length 1.  Then for each cube $C$ in $\mathcal{C}$, we have
  $\diam C\sim d(C,S^k(L))+1$, and each cube which touches $S^k(L)$
  has side length 1.  We call the cubes that touch $S^k(L)$ the
  \emph{boundary cubes} and we call the rest \emph{interior cubes}.
  We can decompose each cube into boundedly many simplices to get a
  triangulation $\tau$ of $D^{d+1}$ where each simplex is bilipschitz
  equivalent to a scaling of the standard simplex.  Let $\tau_{i}$ be
  the subcomplex of $\tau$ contained in the interior cubes.

  We construct a map $h:S^k(L)\cup \tau_i\to Z$ using this
  triangulation.  If $x\in S^k(L)$, we define $h(x)=f(x)$.  For each
  vertex $v$ in $\tau_i$, let $h(v)$ be a point in $S^d$ such that
  $d(v,h(v))$ is minimized, and if $\Delta=\langle
  v_0,\dots,v_k\rangle$ is a simplex of $\tau_i$, define
  $$h|_\Delta=\Omega|_{\langle  h(v_0),\dots,h(v_k)\rangle}.$$
  Since $\diam \Delta\gtrsim 1$, this is Lipschitz with $\Lip(h)\sim
  1$.

  Since $X$ is CAT(0) and thus Lipschitz $n$-connected, we can extend
  $h$ over the boundary cubes inductively; if $C$ is a face of a
  boundary cube and $h$ is already defined on $\partial C$, we extend
  $h$ over $C$ by coning $h|_{\partial C}$ to a point along geodesics.
  This produces an extension $h:D^{k+1}(L)\to X$ with Lipschitz
  constant $\Lip(h)\sim 1$.

  Finally, since the boundary cubes are all contained in a
  neighborhood of $S^{k}(L)$, their image is contained in a
  neighborhood of $Z$, so if $c$ is large enough, then $r_c\circ
  h:D^k(L)\to Z$ is an extension of $\alpha$ with Lipschitz constant
  $\sim 1$.
\end{proof}

In the rest of this section, we will prove
Lemma~\ref{lem:CoarseInfSimp}.  The proof is a quantitative Morse
theory argument, like the ``pushing'' arguments in \cite{ABBDY}.  Bux
and Wortman \cite{BuxWortConnectivity} used a Morse theory argument to
prove that $Z$ is $n$-connected; we sketch their proof in the case
that $\tau$ is a generic direction.  In general, $X$ is contractible,
and $Z$ is the level set of $h$.  If $\tau$ is generic, then $h$ is
nonconstant on every edge of $X$, and we can treat it as a
combinatorial Morse function.

That is, if $u$ is a vertex of $X$, then every vertex of its link
$\Lk(u)$ corresponds to a vertex $v$ adjacent to $u$.  We define the
\emph{downward link} $\downlk{u}\subset \Lk u$ to be the subcomplex
spanned by vertices $v$ with $h(v)<h(u)$.  By results of Schulz
\cite{Schulz}, $\downlk{u}$ is $(n-2)$-connected for all $u$, so
combinatorial Morse theory implies that $Z$ is also $(n-2)$-connected.
Bux and Wortman apply a similar argument in the general case, but with
$h$ replaced by a more complicated function to deal with faces of
dimension $>0$ that are orthogonal to $\tau$.

Arguments like this, however, give poor quantitative bounds.  Given an
$(n-2)$-sphere in $Z$, one can construct a filling in the horosphere
$h^{-1}([0,\infty))$ and use Morse theory to homotope it to $Z$, but
the filling may grow exponentially large in the process.  The pushing
methods in \cite{ABBDY} avoid this sort of exponential growth by
constructing maps from $\downlk{u}$ to $Z$, and we will apply similar
methods here.  

Let $\fa$ be a chamber of $X_\infty$ which contains $\tau$ in its
closure and let
$$X_\infty^0(\fa):=\{\fb \mid \fb \text{ is a chamber of $X_\infty$ opposite
  to $\fa$}\}.$$ Abramenko \cite{AbramTwin} showed that if $Y$ is a
sufficiently thick classical spherical building, then $Y^0(\fa)$ is
$(\rk Y-2)$-connected for any chamber $\fa$ of $Y$.  We will show that
if $X$ is a thick euclidean building of rank $n$, then $X_\infty^0(\fa)$ is
$(n-2)$-connected.

Roughly, we show $(n-2)$-connectivity by showing that ``most'' pairs
of chambers $\fb,\fc\subset \xop$ are opposite to one another and that
if $E_{\fb,\fc}$ is the apartment they span, then $\pinfty
E_{\fb,\fc}\subset \xop$.  Then, for each sphere $\alpha:S^{k}\to
\xop$ with $k<n-2$, we choose a $\fc$ such that for any $\fb$ in the support of $\alpha(S^{n-2})$, $\fb$ is opposite to $\fc$ and $E_{\fb,\fc}\subset
\xop$.  We can then
contract $\alpha$ to a point in $\fc$ along geodesics.  Since
$X_\infty^0(\fa)$ is $(n-2)$-connected, there is no obstruction to
constructing a map
$$\Omega_\infty:\Delta_Z^{(n-1)}\to X_\infty^0(\fa).$$

Next, we construct a map to $Z$.  Given a point $x\in X$ and a
direction $\sigma\in X_\infty$, we let $r$ be the ray emanating from
$x$ in the direction of $\sigma$.  If $h(x)>0$ and $\sigma\in \xop$,
this ray will eventually intersect $Z$.  This provides a map
$X_\infty^0(\fa)\to Z$, but this map is not Lipschitz -- a ray may
travel a long distance before intersecting $Z$.  To fix this, we
define the downward link at infinity $\dlk(x)$ at $x$.  This is 
a subset $\dlk(x) \subset X_\infty$ of directions that point
``downward'' from $x$ (i.e., away from $\fa$).  Rays in these directions
intersect $Z$ after traveling distance $\lesssim h(x)$, so we can define
a map $i_x:\dlk(x)\to Z$ with Lipschitz constant $\lesssim h(x)$ which
sends each direction to the corresponding intersection point.

The sets $\dlk(x)$ get bigger as $x\to \fa$, and any finite subset
of $\xop$ is contained in some $\dlk(x)$.  This lets us convert
$\Omega_\infty$ into a map to $Z$; for each simplex $\Delta$, we
choose some $x_\Delta$ so that $\Omega_\infty(\Delta)\subset
\dlk(x_\Delta)$ and define (after some patching around the edges)
$$\Omega|_{\Delta}=i_{x_\Delta}\circ \Omega_\infty.$$

Finally, we show that restrictions of $\Omega$ to simplices satisfy
Lipschitz bounds.  To do this, we need some control over the Lipschitz
constants of the $i_{x_\Delta}$'s.  The Lipschitz constant of
$i_{x_\Delta}$ is on the order of $h(x_\Delta)$, so we try to bound
the $h(x_\Delta)$'s by controlling which chambers of $\xop$ we use
in fillings of spheres.  This proves the theorem.

The rest of this section is devoted to filling in the details of this
sketch.  First, in Sections~\ref{sec:buildingPrelims} and \ref{sec:folded}, we
describe our notation and define some maps and subsets that we will use
in the rest of the proof.  In Section~\ref{sec:xopApartments}, we
construct $\dlk(x)$ and show that there are
many apartments in $\dlk(x)$ .  In
Section~\ref{sec:constructingOmegaInf}, we use this fact to show that
$\xop$ is $(n-2)$-connected and to construct $\Omega_\infty$ and the
$x_\Delta$'s.  In Section~\ref{sec:constructingOmega}, we use these to
construct $\Omega$.  

\subsection{Preliminaries}\label{sec:buildingPrelims}

In this section, we fix some notation for dealing with buildings,
define some maps and subsets that will be important in the rest of the
section, and prove some of their properties.  Our primary reference is
\cite{AbramBro}.

As stated in the introduction to this section, we let $X$ be an
irreducible thick euclidean building of rank $n$, equipped with its
complete apartment system and let $X_\infty$ be its Bruhat-Tits
building.  If $E$ is an apartment of $X$, we can identify it with the
Coxeter complex of a Euclidean reflection group $W$, and we can
identify the corresponding apartment $\pinfty E\subset X_\infty$ with
the Coxeter complex of $\bar{W}$, the reflection group corresponding
to the linear parts of the elements of $W$.  

Recall that $X_\infty$ can be defined as the set of classes of parallel
unit-speed geodesic rays in $X$, where $r,r':[0,\infty)\to X$ are
parallel if $d(r(t),r'(t))$ is bounded as $t\to \infty$.  For any
$x\in X$ and any $\sigma\in X_\infty$, there is a unique ray based at
$x$ and parallel to $\sigma$ \cite[Lem.~11.72]{AbramBro}.  Given a
subset $Y\subset X$, we define $\pinfty Y$ to be its boundary at
infinity; for the subsets we will consider in this paper, $\pinfty Y$
consists of the set of parallelism classes of geodesic rays in $Y$.
If $\fd$ is a chamber of $\pinfty E$, we say that $E$ is
\emph{asymptotic to} $\fd$.

If $x\in E$, there is a conical cell $x+\fd$ based at $x$ for every
chamber $\fd$ of $\pinfty E$; we call these cells \emph{sectors}.
Note that $x+\fd$ doesn't depend on our choice of $E$; this
construction gives the same result for any apartment $E'$ such that
$\fd\subset \pinfty E'$ and $x\in E'$.

The codimension-1 cells of $E$ are called \emph{panels}.  Each panel
is contained in a codimension-1 subspace of $E$ which we call a
\emph{wall}.  Each wall divides $E$ into a pair of closed
\emph{half-apartments}.  We say that $E'$ is a \emph{ramification} of
$E$ if either $E=E'$ or $E\cap E'$ is a half-apartment.  Since $X$ is
thick, each wall is the boundary of at least three half-apartments.
We say that two chambers are \emph{adjacent} if they have disjoint
interiors and share a panel.  A sequence of chambers $C_1,\dots, C_k$
such that $C_i$ and $C_{i+1}$ are adjacent is called a \emph{gallery}
of combinatorial length $k$.  The minimal combinatorial length of a
gallery connecting two chambers is called the \emph{combinatorial
  distance} between them, and a gallery realizing this length is
called a \emph{minimal gallery}.  We denote the combinatorial distance
between $C$ and $C'$ by $\dcomb(C,C')$.

There is also a CAT(0) metric on $X$ which gives each apartment the
metric of $\R^n$.  We denote this metric by $d:X\times X\to \R$.
Likewise, there is a CAT(1) metric (the angular metric) on $X_\infty$, which we also denote
by $d$.

If $\fc,\fd\subset X_\infty$ are chambers and
$\dcomb(\fc,\fd)=\diam_{\text{comb}}(X_\infty)$, we say that $\fc$ and
$\fd$ are opposite.  Any pair of opposite chambers of $X_\infty$
determines a unique apartment of $X_\infty$
\cite[Thm.~4.70]{AbramBro}.  Indeed, if $\fc,\fd\subset X_\infty$ are
opposite chambers, then there is a unique apartment of $X$ which is
asymptotic to $\fc$ and $\fd$ \cite[Thm.~11.63]{AbramBro}.  

\subsection{Folded apartments} \label{sec:folded} In order to prove
Theorem~\ref{thm:horosphereLipConn}, we will need to understand how
apartments in $X$ are positioned relative to $\fa$.  In this section,
we describe some notions that will be useful to understand the
arrangement of apartments in $X$.

Recall that if
$E$ is an apartment of $X$ and $C\subset E$ is a chamber, there is a
retraction $\rho_{E,C}:X\to E$ such that if $C=C_1,\dots, C_k$ is a
minimal gallery in $X$, then $C=\rho(C_1),\dots, \rho(C_k)$ is a
minimal gallery in $E$.  We will use a related retraction which is
based at a chamber of $X_\infty$ rather than a chamber of $X$.

Following Abramenko and Brown \cite[11.7]{AbramBro}, if $E$ is an
apartment of $X$ and $\fc$ is a chamber of $\partial_\infty E$, we
define $\rho_{E,\fc}:X\to E$ to be the map such that if $E'$ is an
apartment of $X$ which is asymptotic to $\fc$, then
$\rho_{E,\fc}|_{E'}$ is the isomorphism $\phi_{E'}:E'\to E$ which
fixes $E\cap E'$ pointwise.  (In the case that $X$ is a tree, this is
the map obtained by ``dangling'' the tree from a point at infinity.)

Fix some apartment $F$ which is asymptotic to $\fa$ and let
$\rho=\rho_{F,\fa}$.  Note that changing the choice of $F$ changes
$\rho$ by an isomorphism; if $F'$ is asymptotic to $\fa$ and
$\phi_{F}:F\to F'$ is the isomorphism fixing $F\cap F'$ pointwise,
then $\rho_{F',\fa}=\phi_F\circ \rho_{F,\fa}$.  Furthermore, $\rho$
preserves Busemann functions centered at points in $\fa$.  In
particular, $h\circ \rho=h$.  

If $E$ is an apartment of $X$, then $\rho$ maps $E$ to $F$ by a
``folding'' process.  If $X$ is a tree, for instance, then either
$\rho|_E$ is an isomorphism $E\to F$ or it folds $E$ once.  In higher
rank buildings, $\rho|_E$ can be more complicated.  The following lemmas will help us describe
these maps.

For any chamber $C$ of $X$ and any chamber $\fc$ of $X_\infty$, we
define the direction $D_C(\fc)$ of $\rho(\fc)$ at $\rho(C)$ as follows.
Let $\overrightarrow{xy}$ be a directed line segment in $C$ in the
direction of an interior point of $\fc$.  Then
$\rho(\overrightarrow{xy})$ is a directed line segment in $F$ pointing
toward the interior of some chamber of $\partial_\infty F$.  We let
$D_C(\fc)$ be that chamber.  

\begin{lemma}\label{lem:DPreservesType}
  Let $C$ be a chamber of an apartment $E$.  Then $D_C|_{\pinfty
    E}:\pinfty E\to \pinfty F$ is a type-preserving isomorphism.
\end{lemma}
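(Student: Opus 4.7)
The plan is to exhibit $D_C|_{\pinfty E}$ as the restriction of a genuine apartment isomorphism. The key observation is that although $\rho$ may fold $E$ when $E$ is not asymptotic to $\fa$, it is still an isometry on each chamber of $X$. I would first choose an apartment $E'$ containing $C$ and asymptotic to $\fa$; such an $E'$ exists because any chamber of $X$ and any chamber of $X_\infty$ lie in a common apartment \cite{AbramBro}. Then $\rho|_{E'}:E'\to F$ is a type-preserving simplicial isomorphism fixing $E'\cap F$ pointwise, and in particular $\rho|_C:C\to \rho(C)$ is a type-preserving isometry of chambers.

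Next, I would promote $\rho|_C$ to a simplicial isomorphism $\Phi:E\to F$. Let $\psi:E\to E'$ be the unique apartment isomorphism fixing $E\cap E'$ pointwise; since $C\subset E\cap E'$, $\psi$ restricts to the identity on $C$, so setting $\Phi:=(\rho|_{E'})\circ \psi$ yields a type-preserving simplicial isomorphism $E\to F$ with $\Phi|_C=\rho|_C$. Abstractly, this just uses that a chamber is a strict fundamental domain for the affine Coxeter group acting on $E$, so any type-preserving isometry of $C$ has a unique type-preserving extension to $E$. Being an apartment isomorphism, $\Phi$ induces a type-preserving simplicial isomorphism $\Phi_\infty:\pinfty E\to\pinfty F$ of spherical Coxeter complexes.

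Finally, I would identify $D_C|_{\pinfty E}$ with $\Phi_\infty$. Given a chamber $\fc\subset\pinfty E$, the sector $x+\fc$ lies in $E$, so I may choose $y$ in the relative interior of $C\cap(x+\fc)$; then $\overrightarrow{xy}$ points toward an interior point of $\fc$. Since $\Phi|_C=\rho|_C$, we have $\rho(\overrightarrow{xy})=\Phi(\overrightarrow{xy})$, and this image is an initial segment of the sector $\Phi(x)+\Phi_\infty(\fc)\subset F$. Hence the chamber of $\pinfty F$ determined by the direction of $\rho(\overrightarrow{xy})$ at $\rho(x)$ is precisely $\Phi_\infty(\fc)$, so $D_C(\fc)=\Phi_\infty(\fc)$. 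The only real obstacle is a small bookkeeping point: verifying that $\Phi$ is type-preserving and independent of the auxiliary choice of $E'$, both of which follow from the uniqueness of apartment isomorphisms fixing a common chamber.
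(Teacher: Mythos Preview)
Your proof is correct and follows essentially the same approach as the paper: both choose an auxiliary apartment $E'$ containing $C$ and asymptotic to $\fa$, take the canonical isomorphism $\psi:E\to E'$ (the paper's $\phi$) fixing $E\cap E'$, and identify $D_C|_{\pinfty E}$ with $(\rho|_{E'})_\infty\circ\psi_\infty$. Your verification that $D_C=\Phi_\infty$ via $\Phi|_C=\rho|_C$ is slightly more explicit than the paper's, which instead notes $D_C(\fc)=D_C(\phi_\infty(\fc))$ and $D_C(\fc')=\rho_\infty(\fc')$ for $\fc'\subset\pinfty E'$, but the content is identical.
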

\begin{proof}
  If $E'$ is an apartment containing $C$ and asymptotic to $\fa$ and
  $\fc'\subset \pinfty E'$, we have $D_C(\fc')=\rho_\infty(\fc')$.
  If $\phi:E\to E'$ is the isomorphism fixing $E\cap E'$
  pointwise, then $D_C(\fc)=D_C(\phi_\infty(\fc))$ for any $\fc\subset \pinfty E$, so 
  $$D_C|_{\pinfty E}=\rho_\infty|_{\pinfty E'} \circ\phi_\infty.$$
  By Proposition~11.87 of \cite{AbramBro}, $\phi_\infty$ is a
  type-preserving isomorphism.  Likewise, since $\rho|_{E'}$ is the
  isomorphism fixing $E'\cap F$ pointwise, it induces a
  type-preserving isomorphism on $\pinfty E'$.  
\end{proof}

\begin{figure}
  \def\svgwidth{\textwidth}
  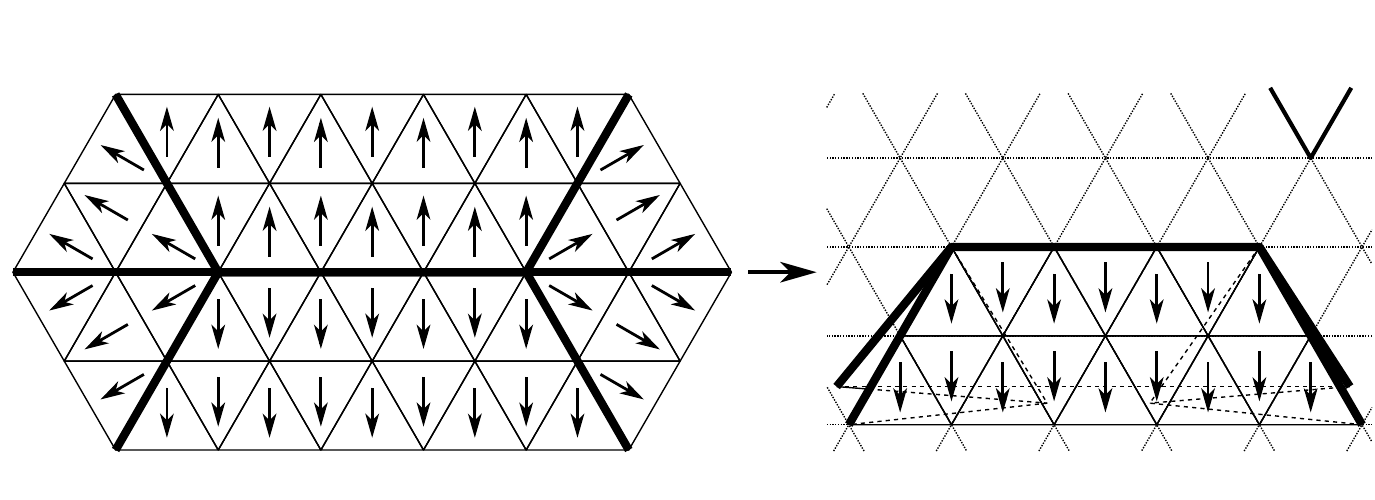
  \caption{\label{fig:folded}A subset of an apartment and its image
    under $\rho$.  (The three-dimensional effect is for clarity -- the
    map sends triangles to triangles.)  Each triangle is
    $\fa$-characteristic for the chamber of $X_\infty$ in
    the direction of its arrow.}
\end{figure}

If $C$ is a chamber of $X$, $x\in C$, and $\fc\subset X_\infty$, then
there is some subsector $x'+\fc$ of $x+\fc$ such that some apartment
of $X$ contains $x'+\fc$ and is asymptotic to $\fa$.  The proof of
Theorem 11.63 (2) in \cite{AbramBro} contains the following lemma,
which gives us a criterion for when we can take $x'=x$.
\begin{lemma}\label{lem:characteristic}
  Suppose that $E$ is an apartment of $X$ and $\fc$ is a chamber in
  $\partial_\infty E$.  If $C$ is a chamber of $E$ such that 
  $$\dcomb(\fa, D_C(\fc))=\max_{B\subset E} \dcomb (\fa, D_B(\fc))$$
  and $x\in C$, then there is an apartment of $X$ containing
  $x+\fc$ and asymptotic to $\fa$.  

  In particular, if $\fa$ and $D_C(\fc)$ are opposite, then $\fa$
  and $\fc$ are opposite.
\end{lemma}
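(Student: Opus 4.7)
My plan is to deduce this from the folding structure of $\rho|_E$. Specifically, the goal is to show that $\rho$ restricts to an isometric embedding on the sector $x+\fc$, with image the sector $\rho(x)+D_C(\fc)\subset F$. Once this is established, the standard theory of sectors asymptotic to a given chamber at infinity (Theorem 11.63 of \cite{AbramBro}) produces an apartment $E''$ containing $x+\fc$ and asymptotic to $\fa$, and the ``in particular'' clause follows immediately by pulling back opposition from $\pinfty F$ to $\pinfty E''$ via Lemma~\ref{lem:DPreservesType}.

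The main body of the argument would analyze $\rho|_E:E\to F$ as a composition of combinatorial foldings. A generic apartment $E$ is not mapped isomorphically by $\rho$; instead, there is a (possibly empty) collection of ``fold walls'' $W$ of $E$ such that $\rho$ reflects the side of $W$ farther from $\fa$ at infinity onto the near side. Consequently, if $B$ and $B'$ are adjacent chambers of $E$ across a panel on a wall $W$, then $D_B(\fc)=D_{B'}(\fc)$ unless $W$ is a fold wall whose boundary at infinity $\pinfty W$ separates $\fc$ from $\fa$ in $\pinfty E$, in which case $D_{B'}(\fc)$ is the reflection of $D_B(\fc)$ across $\pinfty W$ in $\pinfty F$. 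In the latter case, this reflection strictly increases $\dcomb(\fa,\cdot)$ in precisely one of the two directions of wall-crossing.

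Using the maximality of $\dcomb(\fa,D_C(\fc))$, I would then show that no fold wall of $\rho|_E$ meets the sector $x+\fc$ emanating from $C$. If some fold wall $W$ did cross the sector, take a minimal gallery in $E$ starting at $C$ and running out into $x+\fc$ until it first crosses $W$; this produces adjacent chambers $B,B'$ (with $B$ on the $C$-side) such that $D_B(\fc)=D_C(\fc)$ inductively (no earlier fold was crossed) and $D_{B'}(\fc)$ is the reflection of $D_C(\fc)$ across $\pinfty W$ in $\pinfty F$. By the dichotomy of the previous paragraph, either $D_{B'}(\fc)=D_C(\fc)$ (no change) or this reflection strictly increases $\dcomb(\fa,\cdot)$, contradicting the maximality assumed for $C$. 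Hence $\rho|_{x+\fc}$ has no folds, and it is an isometric embedding onto $\rho(x)+D_C(\fc)\subset F$.

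To finish, Theorem 11.63 of \cite{AbramBro} yields some subsector $x_0+\fc\subset x+\fc$ that lies in an apartment $E''$ asymptotic to $\fa$. The restriction $\rho|_{E''}$ is the isomorphism $\phi_{E''}:E''\to F$ fixing $E''\cap F$ pointwise, and since $\rho|_{x+\fc}$ is an isometric embedding into $F$ agreeing with $\phi_{E''}$ on the nonempty subsector $x_0+\fc$, the uniqueness of isomorphic extensions of apartments along a convex set forces $x+\fc\subset E''$, as desired. For the final assertion: if $\fa$ and $D_C(\fc)$ are opposite in $\pinfty F$, then applying the type-preserving isomorphism $\rho_\infty|_{\pinfty E''}:\pinfty E''\to\pinfty F$ from Lemma~\ref{lem:DPreservesType} in reverse shows $\fa$ and $\fc$ are opposite in $\pinfty E''$, hence in $X_\infty$. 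The main obstacle I anticipate is making the fold analysis in the second paragraph precise, particularly verifying that reflection across $\pinfty W$ strictly changes combinatorial distance to $\fa$ in a controlled direction; this requires a careful case analysis of how walls of $E$ and walls of $\pinfty F$ interact under $\rho_\infty$, distinguishing walls that contain $\fc$ at infinity from those that separate $\fc$ from $\fa$.
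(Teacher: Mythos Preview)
The paper does not give its own proof of this lemma; it simply records that the statement is contained in the proof of Theorem~11.63(2) in \cite{AbramBro} and cites that reference. Your outline is essentially that argument, so in spirit you are reproducing the cited proof rather than offering an alternative route.

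Two remarks on precision. First, the global ``fold wall'' picture of $\rho|_E$ that you sketch is harder to pin down than it looks, and you rightly flag this as the main obstacle. The clean way to run the argument is not via a global fold decomposition but via the local chamber-by-chamber dichotomy along a minimal gallery from $C$ into $x+\fc$: at each panel-crossing either $D_{\cdot}(\fc)$ is unchanged or $\dcomb(\fa,D_{\cdot}(\fc))$ strictly increases. This is exactly what the paper states and proves immediately afterward as Lemma~\ref{lem:characteristic2}, and that proof does not posit global fold walls; at each step it passes to an auxiliary apartment asymptotic to $\fa$ containing the current chamber and checks which side of the relevant wall that chamber lies on. Combined with your maximality hypothesis, this yields $D_{C'}(\fc)=D_C(\fc)$ for every chamber $C'$ meeting $x+\fc$, which is the precise form of ``no fold wall meets the sector.''

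Second, your final step --- concluding $x+\fc\subset E''$ from the fact that $\rho|_{x+\fc}$ is an isometric embedding agreeing with $\phi_{E''}$ on a subsector --- is not justified by the sentence you wrote. Injectivity of $\rho$ on $x+\fc$ does not by itself place the sector inside any particular preimage apartment, since $\rho$ is highly non-injective on $X$; the phrase ``uniqueness of isomorphic extensions of apartments along a convex set'' would have to be unpacked into an actual building-theoretic statement. In the special case that matters most for the paper (the ``in particular'' clause, where $D_C(\fc)$ is opposite to $\fa$), there is a direct finish: once every chamber of $x+\fc$ has $D_{C'}(\fc)$ opposite to $\fa$, each such $C'$ lies in the \emph{unique} apartment asymptotic to both $\fa$ and $\fc$, so that single apartment contains the whole sector. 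For the general statement you should follow the gallery argument more closely rather than routing through the isometric-embedding claim.
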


If $C$ is a chamber of $X$ and $\fc$ is a chamber of $\partial_\infty
X$ such that $\fa$ is opposite to $D_C(\fc)$, we call $C$ an
\emph{$\fa$-characteristic chamber} for $\fc$.  
\begin{lemma}
  The following are equivalent:
  \begin{itemize}
  \item $C$ is an $\fa$-characteristic chamber for $\fc$.  
  \item $\fa$ and $\fc$ are opposite and the unique
    apartment asymptotic to $\fa$ and $\fc$ contains $C$.  
  \item $\fa$ and $\fc$ \emph{point in opposite directions at $C$}.  That is,
    whenever $x$ is in the interior of $C$, the rays from $x$ toward
    the barycenters of $\fa$ and $\fc$ point in opposite directions.
  \end{itemize}
\end{lemma}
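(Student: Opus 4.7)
The plan is to prove $(2) \Leftrightarrow (3)$ geometrically inside a single apartment, then deduce $(1) \Leftrightarrow (2)$ using Lemma~\ref{lem:characteristic} together with the way $D_C$ is defined from $\rho$.

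For $(2) \Rightarrow (3)$, suppose $E$ is an apartment containing $C$ and asymptotic to both $\fa$ and $\fc$. For any $x$ interior to $C$, the rays from $x$ to the barycenters of $\fa$ and $\fc$ both lie in the flat $E \cong \R^n$; since $\fa$ and $\fc$ are opposite chambers of the spherical apartment $\pinfty E$, their barycenters are antipodal in $\pinfty E$, so these rays point in opposite directions in $X$. For $(3) \Rightarrow (2)$, pick any apartment $E$ containing $C$ and asymptotic to $\fa$ (such an apartment exists by standard building theory). The ray from $x$ to the barycenter of $\fa$ lies in $E$, and by hypothesis the ray from $x$ to the barycenter of $\fc$ is antipodal to it in $X$; since $E$ is a flat, that ray must also lie in $E$. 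Hence the barycenter of $\fc$ coincides with the barycenter of the chamber of $\pinfty E$ opposite to $\fa$, which forces $\fc \subset \pinfty E$ and $\fc$ opposite to $\fa$. By the uniqueness of the apartment asymptotic to a pair of opposite chambers, $E$ is exactly the apartment claimed in~(2).

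For $(1) \Rightarrow (2)$, Lemma~\ref{lem:characteristic} gives that $\fa$ and $\fc$ are opposite and that for any $x$ interior to $C$ there is an apartment $E'$ containing the sector $x+\fc$ and asymptotic to $\fa$. This $E'$ is asymptotic to both $\fa$ and $\fc$, hence is the unique such apartment; since $x \in E'$ is interior to $C$, the whole chamber $C$ lies in $E'$. For $(2) \Rightarrow (1)$, let $E$ be the apartment from (2). Because $E$ is asymptotic to $\fa$, the map $\rho|_E : E \to F$ is the isomorphism fixing $E \cap F$ pointwise; it induces a type-preserving isomorphism $\pinfty E \to \pinfty F$ that fixes $\fa$ pointwise and therefore sends the chamber of $\pinfty E$ opposite to $\fa$, namely $\fc$, to the chamber of $\pinfty F$ opposite to $\fa$. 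Since $C \subset E$, a short segment $\overrightarrow{xy} \subset C$ pointing toward the interior of $\fc$ lies in $E$, and $\rho(\overrightarrow{xy})$ points toward the image of that interior; thus $D_C(\fc)$ is the chamber of $\pinfty F$ opposite to $\fa$, as required.

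The main obstacle is the implication $(3) \Rightarrow (2)$: one has to extract from a single pair of antipodal rays at $x$ an apartment containing $C$ and asymptotic to \emph{both} $\fa$ and $\fc$. The key point is that a chamber at infinity is determined by the direction of its barycenter from any chosen basepoint, so matching barycenters in $E$ forces $\fc$ to equal the chamber of $\pinfty E$ opposite to $\fa$; the rest is standard uniqueness of apartments through opposite chambers at infinity. The other direction $(2) \Rightarrow (1)$ is essentially the tautological unpacking of $D_C$ on an apartment where $\rho$ acts as an isomorphism.
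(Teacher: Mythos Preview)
Your proof is correct and uses essentially the same ingredients as the paper: Lemma~\ref{lem:characteristic} for $(1)\Rightarrow(2)$, the flat structure of an apartment for the equivalence of (2) and (3), and the fact that $\rho|_E$ is an isomorphism fixing $\fa$ for $(2)\Rightarrow(1)$. The only difference is organizational: the paper runs the cycle $(1)\Rightarrow(2)\Rightarrow(3)\Rightarrow(1)$, closing it with the one-line observation that $D_C(\fa)=\fa$ and that $\rho$ preserves opposite directions on $C$, whereas you prove the two biconditionals $(2)\Leftrightarrow(3)$ and $(1)\Leftrightarrow(2)$ separately.
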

\begin{proof}
  (1) implies (2) by Lemma~\ref{lem:characteristic}.  If (2) holds and
  $E$ is the unique apartment asymptotic to $\fa$ and $\fc$, then the
  rays toward the barycenters of $\fa$ and $\fc$ from any point in $E$
  are rays in $E$ pointing in opposite directions, so (3) holds.
  Finally, if (3) holds, then $D_C(\fa)$ and $D_C(\fc)$ are opposite chambers of $\pinfty F$.  Since $D_C(\fa)=\fa$, this implies (1).
\end{proof}
We can replace $\fa$ in the above constructions with any chamber
$\fd\subset X_\infty$, so more generally, we may say that $C$ is an
$\fd$-characteristic chamber for $\fc$ if $\fd$ and $\fc$ are opposite
and the unique apartment asymptotic to $\fd$ and $\fc$ contains $C$.
Then $C$ is an $\fd$-characteristic chamber for $\fc$ if and only if
$\fa$ and $\fc$ point in opposite directions at $C$.  

Similarly, we say that $\fc$ and $\fc'$ \emph{point in the same
  direction at $C$} if, whenever $x$ is in the interior of $C$, the
rays from $x$ toward the barycenters of $\fc$ and $\fc'$ have the same
tangent vector at $x$.  It follows that
\begin{lemma}\label{lem:sameDirection}
  If $\fc$ and $\fc'$ point in the same direction at $C$ and $C$ is
  $\fd$-characteristic for $\fc$, then it is also $\fd$-characteristic
  for $\fc'$.
\end{lemma}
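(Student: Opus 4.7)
The plan is to unwind both definitions in terms of tangent vectors at an interior point of $C$ and observe that the conclusion is immediate.

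Concretely, fix any $x$ in the interior of $C$. For any chamber $\fe\subset X_\infty$, let $v_\fe(x)$ denote the tangent vector at $x$ of the unique ray emanating from $x$ toward the barycenter of $\fe$ (this is well-defined since rays from $x$ in a given asymptotic class are unique). The hypothesis that $C$ is $\fd$-characteristic for $\fc$ is, by the equivalence just established (the third bullet of the preceding lemma), the statement $v_\fd(x) = -v_\fc(x)$. The hypothesis that $\fc$ and $\fc'$ point in the same direction at $C$ is, by definition, the statement $v_\fc(x) = v_{\fc'}(x)$. Combining these gives $v_\fd(x) = -v_{\fc'}(x)$, so $\fd$ and $\fc'$ point in opposite directions at $C$, which by the third bullet again means $C$ is $\fd$-characteristic for $\fc'$.

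Two minor things I would double-check while writing this up. First, that the definitions are genuinely pointwise (independent of the choice of $x$ in the interior of $C$): this follows because the tangent vectors $v_\fe(x)$ for $x$ ranging over the interior of $C$ are all parallel translates of each other inside any apartment containing $C$ and asymptotic to $\fe$, so the relations ``equal'' and ``opposite'' among these tangent vectors hold at every interior $x$ as soon as they hold at one. Second, that we are entitled to apply the equivalence from the previous lemma with $\fa$ replaced by an arbitrary chamber $\fd\subset X_\infty$; the paper explicitly notes this after the proof of that lemma, so there is nothing further to verify.

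There is no real obstacle here: the lemma is essentially a transitivity statement about tangent vectors, and its role in the paper is bookkeeping for later arguments (it lets one transport the $\fd$-characteristic property across chambers $\fc,\fc'$ that agree infinitesimally at $C$). The only care needed is to phrase the argument strictly in terms of the third, tangent-vector formulation of $\fd$-characteristic from the preceding lemma, rather than the apartment formulation, since ``pointing in the same direction at $C$'' does not directly say anything about apartments.
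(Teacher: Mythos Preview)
Your argument is correct and is exactly the reasoning the paper intends: the lemma is stated immediately after the tangent-vector characterization of $\fd$-characteristic with the phrase ``It follows that,'' and your unwinding via $v_\fd(x)=-v_\fc(x)=-v_{\fc'}(x)$ is precisely that implicit one-line proof.
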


We can apply this lemma to ramifications: if $C\subset E$ is
$\fa$-characteristic for $\fc\subset \pinfty E$ and $E'$ is any
apartment of $X$ that contains $C$, let $\phi:E\to E'$ be the
isomorphism fixing $E\cap E'$ pointwise and let
$\fc'=\phi_\infty(\fc)$.  Then $\fc$ and $\fc'$ point in the same
direction at $C$, so $\fc'$ is opposite to $\fa$.

Figure~\ref{fig:folded} gives an example of the possible behavior of
$\rho$ on an apartment; in the figure, $\rho$ ``folds'' $E$ along the
thick lines.  Each of the arrows is sent to an arrow pointing in the
direction opposite $\fa$, so each chamber of $E$ is
$\fa$-characteristic for the chamber of $E$ that its arrow points
toward.  Since there are arrows pointing toward every chamber of
$\pinfty E$, we have $\pinfty E\subset \xop$.  Any apartment $E'$ that
contains the pictured portion of $E$ also satisfies $\pinfty E'\subset
\xop$.  In fact, if $E'$ is such an apartment, then $\rho$ ``folds''
$E'$ in the same way as $E$ (i.e., if $\phi:E\to E'$ is the
isomorphism fixing $E\cap E'$ pointwise, then $\rho|_E=\rho|_{E'}\circ
\phi$).

As the figure suggests, every apartment can be decomposed into
$\fa$-characteristic chambers:
\begin{lemma}[{see \cite[Lem.\ 3.1.1]{DrutuFilling}}]\label{lem:drutuEuclidean}
  If $E$ is an apartment of $X$ and $\fc_1,\dots, \fc_d
  \in \partial_\infty E$ are the chambers of $\partial_\infty E$ which
  are opposite to $\fa$, then $E$ is a union of subcomplexes
  $Y_1,\dots, Y_d$ such that the chambers of $Y_i$ are the chambers of
  $E$ that are $\fa$-characteristic for $\fc_i$.  The $Y_i$'s are
  convex in the sense that if $C,C'\subset Y_i$, then any minimal
  gallery from $C$ to $C'$ is contained in $Y_i$, and the restriction
  of $\rho$ to any of the $Y_i$'s is an isomorphism.
\end{lemma}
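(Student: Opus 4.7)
My plan is as follows. I would use Lemma~\ref{lem:DPreservesType} to observe that, for each chamber $C \subset E$, the map $D_C|_{\pinfty E} : \pinfty E \to \pinfty F$ is a type-preserving isomorphism, so there is a unique chamber $\fc(C) \in \pinfty E$ with $D_C(\fc(C)) = \fa^{\mathrm{op}}$, where $\fa^{\mathrm{op}}$ denotes the chamber of $\pinfty F$ opposite to $\fa$. By Lemma~\ref{lem:characteristic}, $\fa$ and $\fc(C)$ are then opposite in $X_\infty$, so $\fc(C)$ lies in $\{\fc_1,\dots,\fc_d\}$. I would then define $Y_i$ to be the union of closed chambers $C \subset E$ with $\fc(C) = \fc_i$; by construction $Y_i$ is a subcomplex whose chambers are exactly those that are $\fa$-characteristic for $\fc_i$, and together the $Y_i$'s cover $E$.

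The next step is to give a geometric model for each $Y_i$. Since $\fa$ and $\fc_i$ are opposite, Theorem~11.63 of \cite{AbramBro} produces a unique apartment $E_i$ of $X$ asymptotic to both $\fa$ and $\fc_i$. Using the equivalent conditions for being $\fa$-characteristic recorded right after Lemma~\ref{lem:characteristic}, a chamber $C$ is $\fa$-characteristic for $\fc_i$ if and only if $C \subset E_i$. Hence $Y_i = E \cap E_i$ as a subcomplex. Convexity of $Y_i$ in $E$ now follows from the standard fact that the intersection of two apartments in a euclidean building is a convex subcomplex of each, so any minimal gallery in $E$ between two chambers of $E \cap E_i$ remains in $E \cap E_i$. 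For the isomorphism property, I would invoke the defining property of $\rho = \rho_{F,\fa}$: on the apartment $E_i$, which is asymptotic to $\fa$, $\rho$ restricts to the simplicial isomorphism $E_i \to F$ that fixes $E_i \cap F$ pointwise. Restricting further to $Y_i \subset E_i$ then gives that $\rho|_{Y_i}$ is an isomorphism of $Y_i$ onto its image in $F$.

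The main obstacle, I expect, is the identification $Y_i = E \cap E_i$. This brings together two ingredients: the uniqueness of $\fc(C)$ (coming from the fact that $D_C|_{\pinfty E}$ is a type-preserving isomorphism, Lemma~\ref{lem:DPreservesType}) and the lifting of opposition from $\pinfty F$ to $X_\infty$ provided by Lemma~\ref{lem:characteristic}. Once this identification is in hand, the convexity and isomorphism claims reduce to standard facts about apartment intersections in a euclidean building and the defining property of the retraction $\rho_{F,\fa}$ on apartments asymptotic to $\fa$.
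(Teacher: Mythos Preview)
Your proposal is correct and follows essentially the same route as the paper: define $E_i$ as the apartment asymptotic to $\fa$ and $\fc_i$, identify $Y_i$ with $E\cap E_i$ via the equivalent characterizations of $\fa$-characteristic chambers, and read off convexity and the isomorphism property from this. Your covering argument (each chamber $C\subset E$ has a unique $\fc(C)\in\pinfty E$ with $D_C(\fc(C))$ opposite $\fa$, using Lemma~\ref{lem:DPreservesType}) is exactly the paper's pullback-of-a-segment argument phrased more algebraically; the identification $Y_i=E\cap E_i$ that you flag as the main obstacle is in fact immediate from the second bullet of the equivalent-conditions lemma and is stated outright in the paper's proof.
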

\begin{proof}
  For each $i$, let $E_i$ be the apartment asymptotic to $\fa$ and
  $\fc_i$.  Then $Y_i=E\cap E_i$ is a convex subcomplex of $E$
  consisting of the union of the chambers of $E$ that are
  $\fa$-characteristic for $\fc_i$.  If $C$ is a chamber of $E$, let
  $\overrightarrow{xy}$ be a line segment in $\rho(C)$ in a direction
  opposite to $\fa$.  We can pull it back under $\rho$ to a line
  segment in $C$ which points in the direction of a chamber
  $\fc_i\subset \partial_\infty E$.  Then $C$ is an
  $\fa$-characteristic chamber for $\fc_i$ and $C\subset Y_i$.
\end{proof}

Even when $C$ is not $\fa$-characteristic for $\fc$, the direction
$D_C(\fc)$ still tells us about $\rho|_{x+\fc}$ for $x\in C$.  The
following lemma strengthens Lemma~\ref{lem:characteristic}.
\begin{lemma}\label{lem:characteristic2}
  Suppose that $\fc$ is a chamber in $\partial_\infty X$, that $C$ is
  a chamber of $X$, and $x_0\in C$.  Let $C'$ be a
  chamber which intersects the sector $x_0+\fc$.  Then either
  $D_{C}(\fc)=D_{C'}(\fc)$ or $\dcomb(\fa,D_{C}(\fc))<\dcomb(\fa,D_{C'}(\fc))$.
\end{lemma}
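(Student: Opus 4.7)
The plan is to join $C$ to $C'$ by a generic ray in the sector and track how $\rho=\rho_{F,\fa}$ transforms the direction of the ray at each panel crossing.

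First I would choose an interior point $z$ of $C'\cap(x_0+\fc)$ so that the segment from $x_0$ to $z$, extended to a ray $r$ toward $\fc$, crosses only open chambers and open codimension-$1$ panels; the chambers traversed form a sequence $B_0=C,B_1,\dots,B_k=C'$. Because the conclusion is a dichotomy between equality and strict increase of the integer $\dcomb(\fa,D_{\cdot}(\fc))$, it suffices to prove that for each adjacent pair $(B_i,B_{i+1})$, either $D_{B_i}(\fc)=D_{B_{i+1}}(\fc)$ or $\dcomb(\fa,D_{B_i}(\fc))<\dcomb(\fa,D_{B_{i+1}}(\fc))$.

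For such a pair with common panel $P=B_i\cap B_{i+1}$, the isometries $\rho|_{B_i}$ and $\rho|_{B_{i+1}}$ agree on $P$, so either (a) $\rho(B_i)\neq\rho(B_{i+1})$ are two chambers of $F$ adjacent across $\rho(P)$, or (b) $\rho(B_i)=\rho(B_{i+1})$. In case (a) I would assemble an apartment $E^*$ asymptotic to $\fa$ containing both $B_i$ and $B_{i+1}$ by gluing apartments asymptotic to $\fa$ through each of them along an appropriate half-apartment; $\rho|_{E^*}$ is then an isomorphism and the common tangent of $r$ at $P$ corresponds to a single chamber of $\pinfty E^*$, yielding $D_{B_i}(\fc)=D_{B_{i+1}}(\fc)$. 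In case (b), let $H\subset F$ be the wall containing $\rho(P)$; the two isometries differ by the reflection $s_H$, so on tangent vectors we obtain $D_{B_{i+1}}(\fc)=s_{H_\infty}D_{B_i}(\fc)$, where $s_{H_\infty}$ is the induced reflection on $\pinfty F$.

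The main obstacle is to show in case (b) that $s_{H_\infty}$ moves $D_{B_i}(\fc)$ strictly away from $\fa$. Tracking $\rho(r)$ in $F$, it leaves $\rho(B_i)$, touches $H$ at $\rho(P)$, and returns into $\rho(B_i)$, so $D_{B_i}(\fc)$ lies on the side of $H_\infty$ opposite to $\rho(B_i)$ while $D_{B_{i+1}}(\fc)$ lies on the same side as $\rho(B_i)$; the inequality therefore reduces to showing that $\fa$ lies on the same side of $H_\infty$ as $\rho(B_i)$, i.e.\ that $\rho$ folds toward $\fa$. I would establish this by applying Lemma~\ref{lem:drutuEuclidean} to an apartment $E'\supset B_i\cup B_{i+1}$, decomposing $E'=Y_1\cup\cdots\cup Y_d$ with each $Y_j$ $\fa$-characteristic for some $\fc_j\in\pinfty E'$ opposite to $\fa$; then $B_i$ and $B_{i+1}$ lie in distinct pieces $Y_{j_1},Y_{j_2}$ and the apartment isomorphisms $\phi_{j_1},\phi_{j_2}:E'\to F$ extending $\rho|_{Y_{j_1}},\rho|_{Y_{j_2}}$ both fix $E'\cap F$ pointwise, forcing $\phi_{j_2}=s_H\circ\phi_{j_1}$. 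Since $\phi_{j_1}(\fc_{j_1})$ and $\phi_{j_2}(\fc_{j_2})$ both equal the unique chamber of $\pinfty F$ opposite to $\fa$, and opposite chambers lie on opposite sides of every wall, one concludes that $\fa$ and $\rho(B_i)$ share the side of $H_\infty$ opposite to $D_{B_i}(\fc)$. Summing the strict inequalities from case (b) with the equalities from case (a) then establishes the lemma.
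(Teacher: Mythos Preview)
Your gallery reduction and the dichotomy ``$\rho(B_i)\neq\rho(B_{i+1})$ versus $\rho(B_i)=\rho(B_{i+1})$'' are correct and match the paper's structure. But case~(b) has a sign error that breaks the argument, and the detour through Lemma~\ref{lem:drutuEuclidean} obscures rather than resolves it.

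In case~(b) you correctly observe that $D_{B_i}(\fc)$ lies on the side of $H_\infty$ \emph{opposite} to $\rho(B_i)$, while $D_{B_{i+1}}(\fc)=s_{H_\infty}D_{B_i}(\fc)$ lies on the same side as $\rho(B_i)$. The desired inequality $\dcomb(\fa,D_{B_i}(\fc))<\dcomb(\fa,D_{B_{i+1}}(\fc))$ holds precisely when $\fa$ lies on the same side of $H_\infty$ as $D_{B_i}(\fc)$, i.e.\ on the side \emph{opposite} to $\rho(B_i)$ --- not the same side as you claim. Your Lemma~\ref{lem:drutuEuclidean} argument is aimed at the wrong target, and in any case the step ``$\phi_{j_1},\phi_{j_2}$ both fix $E'\cap F$ pointwise'' is not justified (they fix $Y_{j_1}$ and $Y_{j_2}$, which need not contain $E'\cap F$), nor does the final sentence actually pin down which side $\rho(B_i)$ is on.

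The paper avoids all of this by choosing the reference apartment more carefully: fix an apartment $E$ asymptotic to $\fa$ containing $B_i$, let $H\subset E$ be the wall through the common panel, and split $E=E^+\cup E^-$ with $\fa\subset\pinfty E^+$. The case distinction is then whether $B_i\subset E^+$ or $B_i\subset E^-$. If $B_i\subset E^+$, ramify $E$ along $H$ to include $B_{i+1}$; the ramification is still asymptotic to $\fa$, so $\rho$ is an isomorphism on it and $D_{B_i}(\fc)=D_{B_{i+1}}(\fc)$. If $B_i\subset E^-$ and $B_{i+1}\notin E$, then $\rho$ folds the ramification $E^-\cup(\text{half through }B_{i+1})$ across $H$; but now the fact that $\fa$ is on the opposite side of $H_\infty$ from $\rho(B_i)\subset\rho(E^-)$ is immediate from the definition of $E^+$, and since the ray crosses $H$ from the $E^-$ side toward $E^+$, one gets $D_{B_i}(\fc)$ on the $\fa$ side, which yields the strict inequality. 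This also handles your case~(a) without any gluing of two separate apartments.
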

\begin{proof}
  We proceed similarly to \cite[11.63(2)]{AbramBro}.  
  
  Let $x\in C'$ be a point in $C'\cap x_0+\fc$.  We may choose $x$ so
  that the geodesic segment $\overrightarrow{x_0x}$ never crosses two
  walls simultaneously.  Then $\overrightarrow{x_0x}$ passes through
  chambers $C=C_0,\dots, C_l=C'$ which all meet $x_0+\fc$ and which
  form a minimal gallery in $X$.  For each $i$, let $x_i$ be a point on
  $\overrightarrow{x_0x}$ which lies on the interior of $C_i$.  

  We proceed inductively.  Suppose that the lemma is true for
  $C'=C_0,\dots, C_i$ and consider $C'=C_{i+1}$.  

  Let $E$ be an apartment containing
  $C_i$ and asymptotic to $\fa$.  Let $A$ be the common
  panel between $C_i$ and $C_{i+1}$ and let $H$ be the wall of $E$
  containing $A$.  Let $E^+\subset E$ be the half-apartment bounded by $H$
  which is asymptotic to $\fa$ and let $E^-\subset E$ be the opposite half-apartment.

  We consider two cases: $C_i \subset E^+$ and $C_i\subset E^-$.

  If $C_i\subset E^+$, let $E'$ be a ramification of $E$ (possibly $E$
  itself) which contains $E^+$ and $C_{i+1}$.  This is an apartment
  asymptotic to $\fa$, so by the definition of $\rho$, the restriction
  $\rho|_{E'}$ is an isomorphism fixing $E'\cap F$ pointwise.  This map 
  sends the line segment $\overrightarrow{x_ix_{i+1}}$ to the line
  segment $\overrightarrow{\rho(x_i)\rho(x_{i+1})}$.  Since
  $\overrightarrow{x_ix_{i+1}}$ is a line segment in the direction of
  an interior point of $\fc$, this implies that
  \begin{equation}
    D_{C_i}(\fc)=D_{C_{i+1}}(\fc) \label{eq:equalDirs}
  \end{equation}

  If $C_i\subset E^-$, then we have two possibilities: either
  $C_{i+1}\subset E$ or $C_{i+1}\not\subset E$.  If $C_{i+1}\subset
  E$, then the argument above, applied to $E$, shows that
  $D_{C_i}(\fc)=D_{C_{i+1}}(\fc)$.  Otherwise, let $E'$ be a
  ramification of $E$ which contains $E^-$ and
  $C_{i+1}$ and let $D=E'\smallsetminus E^-$.  Then $D\cup E^+$ is an
  apartment asymptotic to $\fa$, so $\rho|_{D}$ is an
  isomorphism.  Likewise, $\rho|_{E^-}$ is an isomorphism.  In fact, the
  restriction of $\rho$ to $E'=E^-\cup D$ is a map $E'\to F$ which ``folds''
  $E'$ along $H$, sending both $E^-$ and $D$ to $\rho(E^-)$.

  If $s:F\to F$ is the reflection fixing $\rho(H)$,
  $$D_{C_{i+1}}(\fc)=s_\infty(D_{C_{i}}(\fc)).$$  But
  $\overrightarrow{x_0x}$ passes from $E^-$ to $E^+$, so $\fc\subset
  \pinfty E^+$ and $D_{C_i}(\fc)$ is on the same side of
  $\partial_\infty \rho(H)$ as $\fa$.  Therefore,
  \begin{equation}
    \dcomb(\fa,D_{C_i}(\fc))<\dcomb(\fa,D_{C_{i+1}}(\fc)). \label{eq:dirIncrease}
  \end{equation}

  Either \eqref{eq:equalDirs} or \eqref{eq:dirIncrease} holds for each $i$.  The lemma follows by induction.  
\end{proof}

We will also define some families of subsets of $X$ and $X_\infty$.
Our argument is essentially a quantitative version of Morse theory, so
for each point $x\in X$ with $h(x)\ge 0$, we will define a set
$\dlk(x)$ of downward directions, the \emph{downward link at infinity}
and a map from that set to $Z$.  By showing that the set of downward
directions is highly connected, we will show that $Z$ is highly
connected.

For any $x\in X$, let $S(x)$ be the union of the apartments $E$ such
that $x\in E$ and $\fa\subset \partial_\infty E$.  Let
$$\dlk(x)=\pinfty S(x)\cap \xop.$$

The following properties of $\dlk(x)$ will be helpful:
\begin{lemma}\label{lem:sProps}\ \\
  \begin{enumerate}
  \item If $C$ is a chamber of $X$ and $x$ is in the interior of $C$,
    then $\fc$ is a chamber of $\dlk(x)$ if and only if $C$ is
    $\fa$-characteristic for $\fc$.
  \item If $C$ is a chamber of $X$, $x$ is in the interior of $C$, and 
    $\fc,\fc'\subset \dlk(x)$, then $\fc$ and $\fc'$ point in the same direction at $C$.
  \item If $x'\in x+\fa$, then $\dlk(x)\subset
    \dlk(x')$.
  \item If $Q\subset X$ is a bounded subset, then there is an $x\in X$
    such that $d(Q,x)\lesssim \diam Q$ and $x\in q+\fa$ for any $q\in Q$.
  \item If $r:[0,\infty)$ is a unit-speed ray emanating from $x$ in
    the direction of a point $\sigma\in \dlk(x)$, then
    $$h(r(t))=h(x)+t \cos d(\tau,\sigma).$$
    Furthermore, there is an $\epsilon>0$ depending on $X$ and $p(\tau)$ such that
    $-\cos d(\tau,\sigma)>\epsilon$.
  \end{enumerate}
\end{lemma}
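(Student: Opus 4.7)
My plan is to prove the five assertions in sequence, each essentially as a short unpacking of the definition of $\dlk(x)$ and the $\fa$-characteristic framework of Section~\ref{sec:folded}. For (1), if $\fc \in \dlk(x)$ the witnessing apartment $E$ contains $x$ and has $\fa, \fc \subset \pinfty E$; since $\fa$ and $\fc$ are opposite, $E$ is the unique apartment asymptotic to $\fa$ and $\fc$, and it contains $C$ because $x \in \interior C \cap E$. The three-way equivalence after Lemma~\ref{lem:characteristic} then identifies $C$ as $\fa$-characteristic for $\fc$, and the converse direction is immediate from the same equivalence. Part (2) follows instantly: by (1), both $\fc$ and $\fc'$ point in the direction opposite $\fa$'s at $C$, so the rays from $x$ toward the barycenters of $\fc$ and $\fc'$ agree, which is the definition of pointing in the same direction at $C$.

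For (3), any apartment $E$ witnessing $\fc \in \dlk(x)$ is asymptotic to $\fa$ and contains $x$, hence contains the entire sector $x+\fa$ and in particular $x'$; the same $E$ then witnesses $\fc \in \dlk(x')$. For (4), I invoke the standard Euclidean-building fact that any two sectors asymptotic to $\fa$, say $q+\fa$ and $q_0+\fa$, share a common subsector whose apex lies at distance $\lesssim d(q,q_0)$ from $q_0$ along $q_0+\fa$ (compare \cite[Lem.~11.75]{AbramBro}). Fixing any $q_0 \in Q$ and taking $x \in q_0 + \fa$ at distance $C\diam Q$ from $q_0$ in the direction of $\tau$, for $C$ the implicit constant above, uniformity of that estimate places $x$ in $q+\fa$ for every $q \in Q$ simultaneously, while $d(Q,x) \le d(q_0,x) \lesssim \diam Q$.

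For (5), the apartment $E$ witnessing $\sigma \in \dlk(x)$ satisfies $\tau \in \fa \subset \pinfty E$, and the ray $r$ lies in the flat $E$. Since $h$ restricts to an affine function of unit gradient pointing toward $\tau$ on $E$, I obtain $h(r(t)) = h(x) + t \cos d(\tau,\sigma)$ immediately. For the uniform bound on $-\cos d(\tau,\sigma)$, I pass to the spherical apartment $\pinfty E$: the chamber $\fc \subset \pinfty E$ containing $\sigma$ is opposite $\fa$, so $\tau$ has an antipode $\tau^* \in \fc$ with $d(\tau,\tau^*) = \pi$, and the spherical triangle inequality gives $d(\tau,\sigma) \ge \pi - d(\tau^*,\sigma)$. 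Because $\tau$ is not contained in any join factor of rank less than $n$, its type $p(\tau)$ lies in the relative interior of the model chamber, and standard spherical geometry applied to the finitely many types of chamber in $M(X_\infty)$ yields a uniform bound $d(\tau^*, \sigma) \le \pi/2 - \epsilon$ for some $\epsilon > 0$ depending only on $X$ and $p(\tau)$. The main technical obstacle will be extracting this last uniform estimate, where the hypothesis on $p(\tau)$ genuinely enters; the other four assertions are essentially definition-chasing.
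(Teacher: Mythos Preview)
Your arguments for (1)--(3) match the paper's and are fine. There are two genuine gaps in (4) and (5).

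\textbf{Part (4).} Moving from $q_0$ in the direction of $\tau$ does not work when $\tau$ lies on the boundary of $\fa$, which the hypotheses allow. Concretely, in an $\tilde A_2$ building with $\fa$ the arc $[0,\pi/3]$ and $\tau$ the vertex at angle $0$, take $q_0$ at the origin and $q = q_0 + (\cos 60^\circ,\sin 60^\circ)$ in a common apartment. The ray from $q_0$ toward $\tau$ is the positive $x$-axis, which never enters the sector $q+\fa$, no matter how far you go. The ``standard fact'' you cite only says the common subsector $(q+\fa)\cap(q_0+\fa)$ has apex within $\lesssim d(q,q_0)$ of $q_0$; it does not say that apex lies on the $\tau$-ray. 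The paper instead moves each $q$ along the ray toward the \emph{barycenter} of $\fa$ (an interior direction), invokes Kleiner--Leeb to land all these rays in a common apartment $E$ after time $\lesssim \diam Q$, and then takes $x$ in the intersection $\bigcap_q r_q(t)+\fa$, which is a genuine sector in $E$. Replacing $\tau$ by the barycenter of $\fa$ in your argument would repair it.

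\textbf{Part (5).} Your claim that the hypothesis forces $p(\tau)$ to lie in the relative interior of the model chamber is false. If $X$ is irreducible of rank $n$, the hypothesis ``$\tau$ is not contained in a join factor of rank less than $n$'' is vacuous, yet $\tau$ may perfectly well be a vertex of $X_\infty$. What you actually need is that $d(\tau^*,\sigma) \le \pi/2 - \epsilon$, equivalently that $\max_{\theta\in\fa} d(\tau,\theta) < \pi/2$. The paper obtains this by citing \cite[Lem.~4.1]{BuxWortConnectivity}: the diameter of $\fa$ is at most $\pi/2$, with equality forcing $X_\infty$ to be a nontrivial join; and if some $\theta\in\fa$ has $d(\tau,\theta)=\pi/2$ then $\tau$ and $\theta$ lie in different join factors, contradicting the standing assumption that $\tau$ is not parallel to any factor of $X$. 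Your antipode-plus-triangle-inequality setup is fine, but the justification for the strict inequality must go through this join argument, not through an interior-of-chamber claim.
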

\begin{proof}
  The first property follows from the definition of $\dlk(x)$ and the
  fact that $C$ is an $\fa$-characteristic chamber for $\fc$ if and
  only if $\fa$ and $\fc$ are opposite and the unique apartment
  asymptotic to $\fa$ and $\fc$ contains $C$.

  If $x$ is in the interior of $C$ and $\fc,\fc'\subset \dlk(x)$, then
  $C$ is $\fa$-characteristic for $\fc$ and $\fc'$.  Consequently,
  $D_C(\fc)$ and $D_C(\fc')$ are both the chamber of $\pinfty F$
  opposite to $\fa$, so $\fc$ and $\fc'$ point in the same direction
  at $C$.

  For the third property, we show that $S(x)\subset S(x')$.  If $y\in
  S(x)$, then there is an apartment containing $x$ and $y$ and
  asymptotic to $\fa$.  Since $x'\in x+\fa$, $x'$ lies in this
  apartment as well.  It follows that $\dlk(x)\subset \dlk(x')$.

  To prove the fourth property, for all $q\in Q$, let
  $r_q:[0,\infty)\to X$ be a ray emanating from $q$ in the direction
  of the barycenter of $\fa$.  Let $E$ be an apartment asymptotic to
  $\fa$ that intersects $Q$ nontrivially.  Then $d(q, E)\le \diam Q$
  for any $q\in Q$, so by Lemma 4.6.3 of \cite{KleinerLeeb}, there is
  a $c$ such that if $t\ge c \diam Q$, then $r_q(t)\in E$.  In
  particular, $V=\bigcap_q r_q(t)+\fa$ is a sector in $E$ that
  satisfies $V\subset q+\fa$ for all $q$ and $d(V,Q)\lesssim\diam Q$.
  Choose $x\in V$.

  Finally, if $r$ is a ray in the direction of $\sigma$, let $E$ be an
  apartment which contains $x$ and is asymptotic to $\fa$ and to
  $\sigma$.  Then $r$ is a geodesic ray in $E$, which makes an angle
  of $d(\tau,\sigma)$ with the ray emanating from $x$ in the direction
  of $\tau$.  The formula for $h(r(t))$ follows by trigonometry.  

  To bound $d(\tau,\sigma)$, consider 
  $$m=\max_{\theta\in \fa} d(\tau,\theta).$$
  If $\bar{\sigma}$ is the direction opposite to $\sigma$ in $\pinfty
  E$, then by the definition of $\dlk(x)$, we have $\bar{\sigma}\in
  \fa$, so $d(\tau,\sigma)=\pi-d(\tau,\bar{\sigma})\ge \pi-m$.  We
  claim that $m<\pi/2$.

  By Lemma~4.1 of \cite{BuxWortConnectivity}, the diameter of $\fa$ is
  at most $\pi/2$, and if the diameter is equal to $\pi/2$, then $\fa$
  is a nontrivial spherical join and $X$ is a nontrivial product of
  buildings.  Furthermore, if $\theta\in \fa$ is such that
  $d(\tau,\theta)=\pi/2$, then we can write $X=X_1\times X_2$ such
  that $\tau\in (X_1)_\infty, \theta\in (X_2)_\infty$.  This
  contradicts the hypothesis that $\tau$ is not parallel to a factor
  of $X$, so $m<\pi/2$ and $-\cos d(\tau,\sigma)\ge -\cos m>0$.
\end{proof}

\subsection{Apartments in $\xop$}\label{sec:xopApartments}

In this section, we use the tools of the previous section to construct
apartments in $\xop$; in the next section, we will use these
apartments to contract spheres in $\xop$.  First, we show that every
chamber in $\xop$ is part of some apartment in $\xop$:
\begin{lemma}\label{lem:superOppSpecial}
  Suppose that $\fc$ is a chamber of $X_\infty$ opposite to $\fa$ and
  suppose that $C$ is an $\fa$-characteristic chamber for $\fc$.
  There is an apartment $E$ containing $C$ such that $E$ is asymptotic
  to $\fc$ and every chamber of $\partial_\infty E$ is opposite to
  $\fa$.

  Furthermore, there is a $c>0$ depending only on $X$ and an
  $\fa$-characteristic chamber $C_\fb\subset E$ for each chamber
  $\fb\subset \partial_\infty E$ such that $C_\fc=C$ and
  $$\diam \bigcup_{\fb\subset \partial_\infty E} C_\fb\le c.$$
\end{lemma}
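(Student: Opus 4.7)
I will reduce the problem to a combinatorial statement about the spherical link at a special vertex $v$ of $C$, solve that using thickness, and lift back. By arranging that every chamber $C_\fb$ contains $v$, the diameter bound will come for free.

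\textbf{Reduction to the link.} Choose a special vertex $v$ of $C$. Since $X$ is thick, $\Lk v$ is a thick spherical building of the same spherical type as $X_\infty$. Let $\alpha,\gamma\in \Lk v$ be the initial directions at $v$ of the rays from $v$ toward the barycenters of $\fa$ and $\fc$. Because $C$ is $\fa$-characteristic for $\fc$, these rays point in opposite directions at $C$, so $\alpha$ and $\gamma$ are opposite chambers of $\Lk v$ and $\gamma$ is the germ of $C$ at $v$. Any apartment $A\subset \Lk v$ containing $\gamma$ lifts to an apartment $E$ of $X$ through $v$ with germ $A$ at $v$, and by convexity of apartments $C\subset E$. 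The chambers of $A$ correspond bijectively to the chambers of $\pinfty E$ via the asymptotic direction, and a chamber $\beta\in A$ is opposite to $\alpha$ in $\Lk v$ if and only if the corresponding $\fb\subset\pinfty E$ is opposite to $\fa$ in $X_\infty$: opposite initial directions at $v$ concatenate into a bi-infinite geodesic of $X$, which forces opposition of the endpoints at infinity. It thus suffices to produce an apartment $A\subset \Lk v$ that contains $\gamma$ and lies entirely in $\mathrm{Opp}(\alpha)$, the set of chambers of $\Lk v$ opposite to $\alpha$.

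\textbf{Constructing $A$ in $\Lk v$.} The standard panel fact---that in any rank-one residue the unique chamber not opposite to $\alpha$ is $\mathrm{proj}_P(\alpha)$---combined with thickness gives at least two chambers in $\mathrm{Opp}(\alpha)$ in every panel-residue. The cleanest way to assemble these local choices into an apartment is to pick a chamber $\gamma^\ast\in \mathrm{Opp}(\gamma)\cap \mathrm{Opp}(\alpha)$ and take $A$ to be the unique apartment of $\Lk v$ spanned by the opposite pair $(\gamma,\gamma^\ast)$. Existence of $\gamma^\ast$ uses thickness: starting from $\gamma\in\mathrm{Opp}(\alpha)$, one walks a minimal gallery outward, at each step picking a chamber in $\mathrm{Opp}(\alpha)$ (possible by the panel fact), and reaches a chamber opposite to $\gamma$ after $\mathrm{diam}(\Lk v)$ steps. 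A Coxeter-complex argument---every chamber of $A$ lies on a minimal gallery from $\gamma$ to $\gamma^\ast$---then forces every chamber of $A$ to be opposite to $\alpha$.

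\textbf{The $C_\fb$'s and the diameter bound.} Lift $A$ to $E$ as above. For each chamber $\fb\subset\pinfty E$ with corresponding $\beta\in A$, let $C_\fb\subset E$ be the unique chamber containing $v$ whose germ at $v$ is $\beta$; then $C_\fc=C$. Because $\alpha$ and $\beta$ are opposite directions at $v$ and $E$ is Euclidean, they point in opposite directions at every interior point of $C_\fb$, so $C_\fb$ is $\fa$-characteristic for $\fb$. Every $C_\fb$ contains $v$, so $\bigcup_\fb C_\fb$ lies in the star of $v$ in $E$ and therefore $\diam\bigcup_\fb C_\fb\le 2\diam C$, a constant $c$ depending only on $X$.

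\textbf{Main obstacle.} The delicate step is the verification that the apartment spanned by $(\gamma,\gamma^\ast)$ lies entirely in $\mathrm{Opp}(\alpha)$, rather than having only $\gamma$ and $\gamma^\ast$ opposite to $\alpha$. This is where thickness is used nontrivially: the choice of $\gamma^\ast$ must be made so that no wall of the apartment $A$ can shift an intermediate chamber into the non-opposite residue of any panel of $\alpha$. A careful induction on Coxeter length using the panel fact, together with Abramenko's connectivity result for the opposition complex of a sufficiently thick classical spherical building, resolves this and completes the construction.
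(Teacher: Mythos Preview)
Your reduction to the link contains a genuine error. You claim that if $v$ is a special vertex of $C$ then the germ of $C$ at $v$ equals $\gamma$, the initial direction of $\fc$ at $v$. This is false: among the vertices of $C$ there is exactly one, say $v_0$, with $C\subset v_0+\fc$, and only for that vertex does the germ of $C$ coincide with the direction of $\fc$. There is no reason for $v_0$ to be special (outside type $\tilde A$). If you pick some other special vertex $v$ of $C$, then the germ $\gamma_C$ of $C$ at $v$ lies in the apartment of $\Lk v$ coming from $E_{\fa,\fc}$, and in that apartment the \emph{only} chamber opposite to $\alpha$ is $\gamma$; hence $\gamma_C$ is \emph{not} opposite to $\alpha$. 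But then no apartment $A\subset\Lk v$ can simultaneously contain $\gamma_C$ (needed for $C\subset E$) and have every chamber opposite to $\alpha$. So the link reduction collapses.

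Even setting this aside, the second step is not a proof. Given $\gamma,\gamma^\ast\in\mathrm{Opp}(\alpha)$ opposite to each other, the apartment they span need not lie in $\mathrm{Opp}(\alpha)$: your gallery construction controls only one minimal gallery from $\gamma$ to $\gamma^\ast$, while every chamber of the apartment lies on \emph{some} minimal gallery, not necessarily that one. Invoking Abramenko does not help, since his result needs ``sufficiently thick classical'' spherical buildings, and the link at a special vertex of a thick euclidean building can be, for instance, a building over $\mathbb{F}_2$ where such hypotheses fail. The paper sidesteps both problems by working directly in $X$ rather than in a link: starting from $E_{\fa,\fc}$, it repeatedly ramifies the apartment along a carefully translated wall (Lemma~\ref{lem:inductiveOpp}), using thickness of the \emph{euclidean} building to force one of two candidate chambers bordering that wall to be $\fa$-characteristic for the new boundary chamber. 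The freedom to translate walls in the flat apartment is exactly what makes this work without extra thickness assumptions on any spherical residue, and it keeps all the new characteristic chambers within bounded distance of $C$.
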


We will prove this lemma by starting with an apartment $E\subset X$,
then producing a series of ramifications of $E$ so that more and more
chambers of $\pinfty E$ are opposite to $\fa$.  Since $X$ is thick, if
$\fc$ is a chamber of $\pinfty E$ which is not opposite to $\fa$, then
there is some ramification $E'$ of $E$ that replaces $\fc$ with a
chamber that is farther (in $X_\infty$) from $\fa$.  This might
replace a chamber of $\pinfty E$ which is already opposite to $\fa$
with a chamber which is not, but we avoid this by ensuring that $E'$
contains the same $\fa$-characteristic chambers as $E$.

The following lemma produces these ramifications:
\begin{lemma}\label{lem:inductiveOpp}
  Let $E$ be an apartment of $X$ and let $\fc=\fc_1,\dots, \fc_k$ be
  chambers of $\partial_\infty E$ which are opposite to $\fa$.  Let
  $C_i\subset E$ be a $\fa$-characteristic chamber for $\fc_i$ for
  each $i$. Let $\fb$ be a chamber of $\partial_\infty
  E$, distinct from the $\fc_i$'s, which is adjacent to $\fc$.  There
  is a ramification $E_0$ of $E$ such that if $\phi:E\to E_0$ is the
  isomorphism fixing $E\cap E_0$ pointwise, then
  \begin{itemize}
  \item $C_i\subset E\cap E_0$ for all $i$ (and thus
    $\phi_\infty(\fc_i)$ is opposite to $\fa$),
  \item $\phi_\infty(\fb)$ is opposite to $\fa$, and
  \item there is an $\fa$-characteristic chamber $B_0\subset E_0$ for
    $\phi_\infty(\fb)$ such that $d(B_0,\bigcup C_i)\lesssim \diam \bigcup C_i$.
  \end{itemize}
\end{lemma}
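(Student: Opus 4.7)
My plan is to ramify $E$ across a wall $H$ in the direction of the panel $\mathfrak{p} \subset \pinfty E$ shared by $\fb$ and $\fc = \fc_1$, chosen as close to $\bigcup_i C_i$ as possible, and then use thickness of $X$ to flip the opposite side. The walls of $E$ whose boundary at infinity contains $\mathfrak{p}$ form a discretely spaced parallel family. I take $H$ to be the innermost such wall with $\bigcup_i C_i$ contained in the $\fc$-side half-apartment $E^+$; discreteness combined with the Coxeter complex structure then forces some $C_j$ to share a panel $P$ with $H$.

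If $\fb$ is already opposite to $\fa$, take $D = E^-$, $E_0 = E$, and $\phi = \mathrm{id}$. Otherwise, use thickness of $X$ to choose a half-apartment $D$ with $\partial D = H$ and $D \ne E^+$, such that in $E_0 := E^+ \cup D$ (with $\phi : E \to E_0$ fixing $E^+$) the chamber $\phi_\infty(\fb) \subset \pinfty D$ is opposite to $\fa$. This uses the standard spherical building fact that among chambers of $X_\infty$ containing $\mathfrak{p}$, at most one (namely $\mathrm{proj}_{\mathfrak{p}} \fa$, which equals $\fb$ in this case) fails to be opposite to $\fa$; thickness of $X$ provides at least three half-apartments across $H$ and hence multiple possibilities for the chamber of $\pinfty D$ adjacent to $\fc$ across $\mathfrak{p}$, so some choice works. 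Set $B_0$ to be the chamber of $D$ adjacent to $C_j$ across $P$. Then $C_i \subset E \cap E_0 = E^+$ by choice of $H$, $\phi_\infty(\fb)$ is opposite to $\fa$ by choice of $D$, and $d(B_0, \bigcup_i C_i) \lesssim \diam \bigcup_i C_i$ since $B_0$ is adjacent to $C_j \in \bigcup_i C_i$ across a single panel.

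The main obstacle is verifying that $B_0$ is $\fa$-characteristic for $\phi_\infty(\fb)$, i.e.\ that $\fa$ and $\phi_\infty(\fb)$ point in opposite directions at $B_0$. I plan to argue this by a local tangential analysis at $B_0$: inside $E_0$, the sector at $B_0$ in direction $\phi_\infty(\fb)$ lies in $D$ and exits $B_0$ through the face opposite $P$; meanwhile, since $C_j$ is $\fa$-characteristic for $\fc$ and $\fa$ lies on the opposite side of $\pinfty H$ from $\fc$, the ray in $X$ from a point interior to $B_0$ toward $\fa$ enters $C_j$ through $P$ and continues along the apartment asymptotic to $\fa$ and $\fc$, so its tangent vector at $B_0$ exits through $P$. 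These two outgoing directions pass through opposite faces of $B_0$ and hence are antipodal in the tangent space, yielding the required opposition.
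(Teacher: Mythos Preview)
Your ``local tangential analysis'' has a genuine gap. First, you write ``$C_j$ is $\fa$-characteristic for $\fc$,'' but $C_j$ is only $\fa$-characteristic for $\fc_j$, and nothing forces the chamber of $\bigcup_i C_i$ touching $H$ to be $C_1$. More seriously, even granting that, knowing the direction of $\fa$ at $C_j$ does not tell you the direction of $\fa$ at the adjacent chamber $B_0$: this is exactly the folding phenomenon that makes the lemma nontrivial. Your claim that the ray from $B_0$ toward $\fa$ ``enters $C_j$ through $P$'' presupposes that $B_0$ lies in an apartment asymptotic to $\fa$ containing $C_j$, which is essentially what you are trying to prove. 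The ``opposite faces'' language is also ill-defined in a general Euclidean Coxeter chamber, and two rays exiting through different panels need not be antipodal. There is a second gap earlier: you assert that thickness gives ``multiple possibilities for the chamber of $\pinfty D$ adjacent to $\fc$ across $\mathfrak{p}$,'' but distinct half-apartments of $X$ bounded by $H$ can be asymptotic to the same chamber at infinity. And your case ``$\fb$ already opposite to $\fa$, take $E_0=E$'' still owes you an $\fa$-characteristic chamber for $\fb$ near $\bigcup C_i$, which you have not produced.

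The paper proceeds indirectly and avoids all of this. It places $B$ inside the sector $x_0+\fb$ with $x_0\in C_1$, and uses Lemma~\ref{lem:characteristic2} (monotonicity of $D_C(\fb)$ along that sector) to conclude that $B$ is $\fa$-characteristic for either $\fb$ or $\fc$. The same dichotomy holds for a third chamber $B'$ across the same panel, with $\fb$ replaced by $\phi_\infty(\fb)$. A convexity argument (both $B$ and $B'$ characteristic for $\fc$ would put three chambers on one panel inside the apartment $E_{\fa,\fc}$) then shows at least one of the two is characteristic for the corresponding $\fb$, and that one determines $E_0$ and $B_0$. No direct tangential computation at $B_0$ is needed, and no case split on whether $\fb$ is opposite to $\fa$ is made.
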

\begin{proof}
  Let $C=C_1$ and let $x_0\in C$.  Let $H$ be a wall in $E$ such that
  $\partial_\infty H$ separates $\fb$ and $\fc$.  Let $M,M'\subset E$
  be the half-apartments of $E$ bounded by $H$.  By translating $H$ and
  possibly switching $M$ and $M'$, we may arrange that 
  \begin{itemize}
  \item $\fc\in \pinfty M$ and $\fb\in \pinfty M',$
  \item $C_i\subset M$ for all $i$, and
  \item $d(H,C)\lesssim \diam(\bigcup C_i).$
  \end{itemize}
  We claim that there is a ramification $E_0$ of $E$ which contains $M$
  and satisfies the conditions of the lemma.  

  By our choice of $H$, the intersection $x_0+\fb\cap M'$ is a sector
  of $E$, and we can choose $B\subset x_0+\fb\cap M'$ to be a chamber
  which borders $H$ and satisfies $d(x_0,B)\lesssim \diam(\bigcup
  C_i).$ Let $A$ be the panel of $H$ bordering $B$, let $D\subset M$
  be the chamber of $E$ adjacent to $B$ along $A$, and let $B'$ be a
  chamber adjacent to $A$ and distinct from $B$ and $D$.  Let $E'$ be
  a ramification of $E$ that contains $B'$ and let $\phi:E\to E'$ be
  the isomorphism fixing $E\cap E'$.  We claim that either the lemma
  is satisfied for $E_0=E$ and $B_0=B$ or it is satisfied for $E_0=E'$
  and $B_0=B'$.

  Since $\fa$ is opposite to $D_{C}(\fc)$ and $D_C(\fb)$ is adjacent to $D_C(\fc)$,
  $$\dcomb(\fa,D_{C}(\fb))=\dcomb(\fa,D_{C}(\fc))-1.$$
  Lemma~\ref{lem:characteristic2} implies that either $D_{B}(\fb)$ is
  opposite to $\fa$ or $D_{B}(\fb)=D_{C}(\fb)$.  By
  Lemma~\ref{lem:DPreservesType}, $D_{B}$ and $D_C$ are
  type-preserving isomorphisms from $\pinfty E$ to $\pinfty F$, so if
  $D_{B}(\fb)=D_{C}(\fb)$, then $D_{B}=D_{C}$, and $B$ is
  $\fa$-characteristic for $\fc$.  So $B$ is $\fa$-characteristic for
  either $\fb$ or $\fc$.  In the first case, the lemma is satisfied
  for $E_0=E$ and $B_0=B$.

  Likewise, if $\fb'=\phi_\infty(\fb)$, then $D_C(\fb')=D_C(\fb)$ is
  adjacent to $D_C(\fc)$ and $B'\subset x_0+\fb'$, so $B'$ is
  $\fa$-characteristic for either $\fb'$ or $\fc$.  In the first case,
  the lemma is satisfied for $E_0=E'$ and $B_0=B'$.  

  Suppose by way of contradiction that $B$ and $B'$ are both
  $\fa$-characteristic for $\fc$.  The union of the set of chambers of
  $X$ that are $\fa$-characteristic for $\fc$ is the unique apartment
  $E_{\fa,\fc}$ asymptotic to $\fa$ and $\fc$, so in particular, it is
  convex.  It contains $B$ and $C$, so it contains $D$ as well.  But
  then $B$, $B'$, and $D$ are distinct chambers of $E_{\fa,\fc}$ which
  are all adjacent to the same panel.  This is impossible.
\end{proof}

\begin{proof}[Proof of Lemma~\ref{lem:superOppSpecial}]
  Let $E_{\fa,\fc}\subset X$ be the apartment spanned by $\fa$ and
  $\fc$, so that $C\subset E_{\fa,\fc}$.  By applying
  Lemma~\ref{lem:inductiveOpp} to $E_{\fa,\fc}$ repeatedly, we can
  construct an apartment $E$ such that for any chamber
  $\fb\in \partial_\infty E$, there is an $\fa$-characteristic chamber
  $C_{\fb}$ for $\fb$, and
  $$\diam \bigcup_{\fb\subset \partial_\infty E'}  C_{\fb}$$
  is bounded.
\end{proof}

In fact, we can find many apartments in $\xop$ simultaneously:
\begin{lemma}\label{lem:miniSuperOpp}
  Suppose that $E$ is an apartment of $X$ and suppose that for each
  chamber $\fc\subset \pinfty E$ there is a chamber $C_\fc\subset E$
  which is $\fa$-characteristic for $\fc$ and a point $x_\fc\in
  C_\fc$.  Let $\fb$ and $\bar\fb$ be two opposite chambers in
  $\pinfty E$.  Suppose that $C$ is a chamber of $X$ and $x$ is a
  point in the interior of $C$ such that $x\in x_\fb+\fb$ and
  $C_\fc\subset x+\bar\fb$ for all $\fc \subset \pinfty E$.  Then
  there is an $x'\in x+\fa$ such that 
  $$d(x,x')\lesssim \diam \bigcup_{\fc\subset \pinfty E} C_\fc$$
  and for every chamber $\fd\subset\dlk(x)$,
  \begin{itemize}
  \item $\fd$ is opposite to $\bar\fb$,
  \item if $E_{\fd,\bar\fb}$ is the apartment spanned by $\fd$ and
    $\bar\fb$, then $\partial_\infty E_{\fd,\bar\fb} \subset
    \dlk(x')$.
  \end{itemize}
\end{lemma}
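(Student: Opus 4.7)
The plan is to verify that $\fb$ already lies in $\dlk(x)$, use this to deduce that every $\fd\in\dlk(x)$ is opposite to $\bar\fb$, identify the apartment $E_{\fd,\bar\fb}$ as a ``refolding'' of $E$ so that every $C_\fc$ does double duty as an $\fa$-characteristic chamber for a chamber of $\pinfty E_{\fd,\bar\fb}$, and finally produce $x'$ by applying Lemma~\ref{lem:sProps}(4) to a carefully chosen bounded set containing both $x$ and $\bigcup_\fc C_\fc$.

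First, since $x_\fb\in C_\fb\subset E$ and $x\in x_\fb+\fb$, both $x$ and the chamber $C$ containing it lie in $E$. Tracing along the line segment $\overline{x_\fb x}$, which is contained in the sector $x_\fb+\fb$, and applying Lemma~\ref{lem:characteristic2}, the direction $D_{C'}(\fb)$ either stays constant or moves strictly farther from $\fa$ as $C'$ varies along the gallery from $C_\fb$ to $C$; since $D_{C_\fb}(\fb)$ is already opposite to $\fa$, it follows that $D_C(\fb)=D_{C_\fb}(\fb)$ is opposite to $\fa$. Hence $C$ is $\fa$-characteristic for $\fb$ and $\fb\in\dlk(x)$. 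Given any $\fd\in\dlk(x)$, Lemma~\ref{lem:sProps}(2) says $\fd$ and $\fb$ point in the same direction at $C$, so the rays from $x$ to the barycenters of $\fd$ and of $\fb$ share an initial tangent at $x$. Because $\bar\fb$ is opposite to $\fb$ in $\pinfty E$ and $x\in E$, the ray to the barycenter of $\bar\fb$ has the opposite initial tangent; concatenating it with the ray to the barycenter of $\fd$ produces a bi-infinite geodesic in $X$ whose ends are the barycenters of $\bar\fb$ and $\fd$. Thus $\fd$ and $\bar\fb$ are opposite chambers and the apartment $E_{\fd,\bar\fb}$ is well-defined.

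The geodesic line above lies in $E_{\fd,\bar\fb}$, so $x\in E_{\fd,\bar\fb}$, and the sector $x+\bar\fb$, uniquely determined by $x$ and $\bar\fb$, lies in $E\cap E_{\fd,\bar\fb}$. By hypothesis each $C_\fc$ is contained in $x+\bar\fb$ and hence in $E\cap E_{\fd,\bar\fb}$. Let $\phi\colon E\to E_{\fd,\bar\fb}$ be the isomorphism of apartments fixing this intersection pointwise. Then $\phi_\infty\colon\pinfty E\to\pinfty E_{\fd,\bar\fb}$ is a type-preserving bijection, and applying Lemma~\ref{lem:sameDirection} at $C_\fc\subset E\cap E_{\fd,\bar\fb}$ shows that $C_\fc$ is $\fa$-characteristic for $\phi_\infty(\fc)$ as well as for $\fc$. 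Thus every chamber of $\pinfty E_{\fd,\bar\fb}$ is opposite to $\fa$ and has an $\fa$-characteristic chamber of the form $C_\fc$.

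Finally, apply Lemma~\ref{lem:sProps}(4) to the bounded set $Q=\{x\}\cup\bigcup_{\fc\subset\pinfty E} C_\fc$ to obtain an apartment $E_0$ asymptotic to $\fa$ and a point $x'$ with $x'\in q+\fa$ for every $q\in Q$ and $d(x',Q)\lesssim\diam Q$. Choosing $x'$ to be the vertex of the sector $V=\bigcap_{q\in Q}(r_q(t)+\fa)$ from the proof of that lemma (with $t\sim\diam Q$), one gets $d(x,x')\lesssim\diam Q$; the sector constraints $x\in x_\fb+\fb$ together with $C_\fc\subset x+\bar\fb$ force $x$ to lie within a controlled distance of $\bigcup_\fc C_\fc$, giving $\diam Q\lesssim\diam\bigcup_\fc C_\fc$. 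Thus $x'\in x+\fa$, and for each $\mathfrak{e}=\phi_\infty(\fc)\in\pinfty E_{\fd,\bar\fb}$, the apartment $E_{\fa,\mathfrak{e}}$ contains $C_\fc$ and hence the sector $C_\fc+\fa\ni x'$; this gives $\mathfrak{e}\in\dlk(x')$. The main obstacle is this last step: one must simultaneously arrange $x'\in x+\fa$ and $x'\in C_\fc+\fa$ for every $\fc$, with the displacement from $x$ controlled by $\diam\bigcup_\fc C_\fc$ alone.
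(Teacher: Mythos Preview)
Your argument tracks the paper's proof closely: you show $C$ is $\fa$-characteristic for $\fb$, use Lemma~\ref{lem:sProps}(2) to see that any $\fd\in\dlk(x)$ points the same way as $\fb$ at $C$ and hence opposite to $\bar\fb$, conclude $x+\bar\fb\subset E_{\fd,\bar\fb}$ so that every $C_\fc$ serves as an $\fa$-characteristic chamber for the corresponding chamber of $\pinfty E_{\fd,\bar\fb}$, and then invoke Lemma~\ref{lem:sProps}(4). The paper does exactly this, though more tersely (it does not spell out the use of Lemma~\ref{lem:characteristic2} to get $\fb\in\dlk(x)$, and it phrases the opposition of $\fd$ and $\bar\fb$ via ``$C$ is $\bar\fb$-characteristic for $\fd$'' rather than the bi-infinite geodesic). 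The paper applies Lemma~\ref{lem:sProps}(4) to $\{x_\fc\}$ alone and recovers $x'\in x+\fa$ from $x'\in x_\fb+\fa\subset x+\fa$ (since $x\in x_\fb+\fb$ in $E_{\fa,\fb}$ gives $x_\fb\in x+\fa$).

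Your attempted justification of the distance bound, however, is wrong. The assertion that ``$x\in x_\fb+\fb$ together with $C_\fc\subset x+\bar\fb$ force $x$ to lie within a controlled distance of $\bigcup_\fc C_\fc$'' is false: you may push $x$ arbitrarily far along the $\fb$-direction from $x_\fb$ and both hypotheses remain satisfied (the second just says the $C_\fc$ lie behind $x$ in the $\bar\fb$-direction, which only improves as $x$ moves forward). So $\diam Q$ is not controlled by $\diam\bigcup_\fc C_\fc$. You are right to flag this as ``the main obstacle.'' In fact the paper's own proof is entirely silent on the bound $d(x,x')\lesssim\diam\bigcup_\fc C_\fc$: it asserts only the existence of $x'$ with the required containment properties and never returns to the displacement estimate. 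As stated, the bound does not seem to follow from the hypotheses alone; in the sole application (Lemma~\ref{lem:superOpp}) one has $d(x,\bigcup_\fc C_\fc)\lesssim 1$ by construction, which is what actually makes the estimate go through there.
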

\begin{proof}
  Suppose that $\fd\subset\dlk(x)$. Then $C$ is
  $\fa$-characteristic for $\fb$ and $\fd$, so $\fb$ and $\fd$ point
  in the same direction at $C$.  Since $\fb$ and $\bar\fb$ point in
  opposite directions at $C$, we conclude that $C$ is
  $\bar\fb$-characteristic for $\fd$.  Thus, $\bar\fb$ and $\fd$ are
  opposite and $C\subset E_{\fd,\bar\fb}$.

  In particular, $x+\bar\fb\subset E_{\fd,\bar\fb}$, so
  $C_\fc\subset E_{\fd,\bar\fb}$ for all $\fc\subset \pinfty E$.  Let
  $\phi:E_{\fd,\bar\fb}\to E$ be the isomorphism fixing
  $E_{\fd,\bar\fb}\cap E$ pointwise and suppose that $\fc'\subset
  \pinfty E_{\fd,\bar\fb}$.  If $\fc=\phi_\infty(\fc')$, then
  $C_{\fc}$ is an $\fa$-characteristic chamber for $\fc'$, so
  $\fc'\subset \dlk(x_\fc)$.  

  By Lemma~\ref{lem:sProps}(3) and (4), there is an
  $$x'\in \bigcap_{\fc\subset \pinfty E} x_\fc+\fa.$$
  such that $x'\in x+\fa$ and $\dlk(x_\fc)\subset \dlk(x')$ for every
  $\fc\subset \pinfty E$.
\end{proof}

Combining Lemmas~\ref{lem:miniSuperOpp} and \ref{lem:superOppSpecial}
we get:
\begin{lemma}\label{lem:superOpp}
  For any $x\in X$, there is a chamber $\fd\subset X_\infty$ opposite
  to $\fa$ and an $x'\in x+\fa$ such that
  \begin{itemize}
  \item if $\fc\subset \dlk(x)$ then $\fd$ is
    opposite to $\fc$,
  \item if $\fc\subset \dlk(x)$ and $E_{\fc,\fd}$ is the apartment spanned by $\fc$ and $\fd$, then $\partial_\infty E_{\fc,\fd} \subset \dlk(x')$, and
  \item $d(x,x')\lesssim 1$.
  \end{itemize}
\end{lemma}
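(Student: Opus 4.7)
The plan is to deduce Lemma~\ref{lem:superOpp} by combining Lemmas~\ref{lem:superOppSpecial} and \ref{lem:miniSuperOpp}. First I would use Lemma~\ref{lem:superOppSpecial} to build an apartment $E$ whose cluster of $\fa$-characteristic chambers sits at bounded distance above $x$ in the direction of $\fa$. Then I would apply Lemma~\ref{lem:miniSuperOpp} to our original $x$, with $\fb$ taken to be an element of $\dlk(x)$; the resulting $\bar\fb$ plays the role of $\fd$ in the statement.

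Concretely, let $C^*$ be a chamber with $x$ in its interior and choose any $\fc_0 \subset \dlk(x)$ (nonempty because in any apartment through $C^*$ asymptotic to $\fa$, the chamber of $\pinfty$ antipodal to $\fa$ lies in $\dlk(x)$). By Lemma~\ref{lem:sProps}(1), $C^*$ is $\fa$-characteristic for $\fc_0$. Working in $E_0 = E_{\fa,\fc_0}$, pick a chamber $C^\dagger \subset E_0$ in the sector $x+\fa$ at distance $R$ from $x$, where $R$ is a constant depending only on $X$ large enough that the closed $c$-neighborhood of $C^\dagger$ in $E_0$ still lies in $x+\fa$ (here $c$ is the constant from Lemma~\ref{lem:superOppSpecial}). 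Since $C^\dagger \subset E_{\fa,\fc_0}$, it is $\fa$-characteristic for $\fc_0$; apply Lemma~\ref{lem:superOppSpecial} with $\fc = \fc_0$ and $C = C^\dagger$ to obtain an apartment $E \ni C^\dagger$ asymptotic to $\fc_0$, whose chambers at infinity are all opposite to $\fa$, with $\fa$-characteristic chambers $C_\fc \subset E$ satisfying $C_{\fc_0} = C^\dagger$ and $\diam \bigcup_\fc C_\fc \le c$. The walls used in the ramifications of Lemma~\ref{lem:superOppSpecial} all lie within distance $\lesssim c$ of $C^\dagger$, and the preserved half-apartment in each step is the one containing $C^\dagger$ with $\fc_0$ at its boundary at infinity; since $C^*$ is reached from $C^\dagger$ by moving in direction $\fc_0$, it lies in this preserved half-apartment throughout, so $C^* \subset E$ and $x \in E$. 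Now invoke Lemma~\ref{lem:miniSuperOpp} with $\fb = \fc_0$, $\bar\fb$ its opposite in $\pinfty E$, points $x_\fc \in C_\fc$, and the chamber $C = C^*$ containing $x$. The hypothesis $x \in x_\fb + \fb$ holds because, in $E_0$, $x_\fb \in C^\dagger \subset x + \fa$ gives $x - x_\fb \in \fc_0 = \fb$. The hypothesis $C_\fc \subset x + \bar\fb$ for all $\fc \subset \pinfty E$ holds because $x \in E$ makes $x + \bar\fb$ a sector of $E$, and the choice of $R$ ensures the cluster of diameter $\le c$ fits inside. Setting $\fd = \bar\fb$ yields: $\fd \subset \pinfty E$ is opposite to $\fa$, $\fd$ is opposite to every $\fc \subset \dlk(x)$, there is $x' \in x + \fa$ with $d(x,x') \lesssim c \lesssim 1$, and $\pinfty E_{\fc,\fd} \subset \dlk(x')$ for every $\fc \subset \dlk(x)$.

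The main obstacle is the combined verification of the two structural facts used above: that $C^*$ survives all the ramifications of Lemma~\ref{lem:superOppSpecial} (so that $x \in E$), and that the cluster $\bigcup C_\fc$ sits inside $x + \bar\fb$. The first follows from the observation that each ramification wall lies within $\lesssim c$ of $C^\dagger$ and cuts off the half-apartment not containing $\fc_0$ at infinity, while $C^*$ lies on the $\fc_0$-ward side of $C^\dagger$ at distance $R \gg c$. The second reduces to the fact that the sector $x + \bar\fb$ in $E$ looks, near $C^\dagger$, like the sector $x + \fa$ in $E_0$, and at distance $R$ this sector has more than enough room for a cluster of diameter $c$. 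The degenerate case where $x$ lies on a positive-codimension face of $X$ can be handled by a small perturbation, since the output constants are robust.
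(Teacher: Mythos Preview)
Your approach is essentially the paper's: pick $\fc_0\in\dlk(x)$, move a bounded distance into $x+\fa$ to a chamber $C^\dagger$, apply Lemma~\ref{lem:superOppSpecial} there to get an apartment $E$ with a bounded cluster of $\fa$-characteristic chambers, then feed $E$ into Lemma~\ref{lem:miniSuperOpp} with $\fb=\fc_0$ and $\fd=\bar\fb$.

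Your justification that $C^*\subset E$, however, is shaky. You assert that in each ramification step of Lemma~\ref{lem:inductiveOpp} ``the preserved half-apartment is the one \dots\ with $\fc_0$ at its boundary at infinity,'' but that is not what Lemma~\ref{lem:inductiveOpp} says: the preserved half-apartment $M$ has the \emph{current} $\fc$ (a chamber adjacent to the $\fb$ being processed) at infinity, and after the first few steps this need not be $\fc_0$. You do not need to track walls at all. Lemma~\ref{lem:superOppSpecial} already guarantees that the output apartment $E$ is asymptotic to $\fc_0$ and contains $C^\dagger$; hence $E$ contains the sector $x^\dagger+\fc_0$ for any $x^\dagger\in C^\dagger$, and since $x^\dagger\in x+\fa$ in $E_0$ we have $x\in x^\dagger+\fc_0$, so $x\in E$.

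Your verification of $C_\fc\subset x+\bar\fb$ is also informal. The paper makes it precise via the isomorphism $\phi:E_0\to E$ fixing $E_0\cap E$: since $\phi$ fixes both $x$ and $C^\dagger$ and sends $\fa$ to $\bar\fb$, it carries $x+\fa$ to $x+\bar\fb$ and the $c$-ball in $E_0$ around $C^\dagger$ to the $c$-ball in $E$, so the containment $B_{E_0}(C^\dagger,c)\subset x+\fa$ (from the choice of $R$) transports to the desired containment in $E$.
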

\begin{proof}
  Let $\fb\subset \dlk(x)$ and let $E$ be the unique apartment
  asymptotic to $\fa$ and $\fb$.  Since $\fb\subset \dlk(x)$, we have
  $x\in E$.  We may perturb $x$ in the direction of $\fa$ to ensure
  that $x$ is in the interior of some chamber $C$ of $E$; this doesn't
  change $\dlk(x)$.  Let $r$ be a unit-speed ray emanating from $x$ in
  the direction of the barycenter of $\fa$ and let $0<\theta<\pi/2$ be
  the minimum angle between the barycenter of $\fa$ and any point on
  its boundary.  Let $c$ be the constant in
  Lemma~\ref{lem:superOppSpecial} and let $t>\frac{c}{\sin{\theta}}$,
  so that
  $$B_{E}(r(t),c) \subset x+\fa,$$ 
  where $B_{E}(r(t),c)$ is the ball in $E$ with center $r(t)$ and radius $c$.  Let $x_0=r(t)$.

  Let $C_0\subset E$ be a chamber such that $x_0\in C_0$.
  Since $C_0\subset E$, it is $\fa$-characteristic for $\fb$.  By
  Lemma~\ref{lem:superOppSpecial}, there is an apartment $E'$ and a
  collection of $\fa$-characteristic chambers $C_\fc\subset E'$ for
  $\fc\subset \partial_\infty E'$ such that $x_0+\fb\subset E'$ and
  $$\bigcup_{\fc\subset \pinfty E'} C_\fc\subset B_{E'}(x_0, c).$$
  Let $\bar\fb$ be the chamber of
  $\partial_\infty E'$ opposite to $\fb$.  We claim that $x+\bar\fb$
  contains all of the $C_\fc$'s.  

  Let $\phi: E\to E'$ be the isomorphism fixing $E\cap E'$ pointwise.
  Then $\phi$ fixes $C$ and $C_0$ and sends $\fa$ to $\bar\fb$, so
  $\phi(x+\fa)=x+\bar\fb$ and $\phi(B_E(x_0,c))=B_{E'}(x_0,c)$.
  Therefore, 
  $$\bigcup_{\fc\subset \pinfty E'} C_\fc\subset B_{E'}(x_0,t)\subset x+\bar\fb.$$
  By applying Lemma~\ref{lem:miniSuperOpp} to $E'$, we obtain an $x'$
  that satisfies the required properties and has 
  $$d(x,x')\lesssim \diam \bigcup_{\fc\subset \pinfty E'} C_\fc\lesssim 1.$$
\end{proof}

We can also use these techniques to construct $(n-1)$-spheres in $Z$
which are homotopically nontrivial in $Z$.  This generalizes results
of Bux and Wortman \cite{BuxWortFiniteness} on buildings acted on by
$S$-arithmetic groups to arbitrary euclidean buildings.
\begin{lemma}\label{lem:nonFiniteness}
  For any $r>0$, there is a map $\alpha:S^{n-1}\to Z$ such that
  $\alpha$ is homotopically nontrivial in $N_r(Z)$, where $N_r(Z)$ is
  the $r$-neighborhood of $Z$.
\end{lemma}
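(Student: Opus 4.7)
Given $r > 0$, my plan is to build an explicit $(n-1)$-sphere $\alpha:S^{n-1}\to Z$ whose class in $H_{n-1}(X\setminus\{v'\})$ is nonzero for a special vertex $v'\in X$ with $h(v')>r$. Since $N_r(Z)\subset X\setminus\{v'\}$, any null-homotopy of $\alpha$ in $N_r(Z)$ would force $[\alpha]=0$ in $H_{n-1}(X\setminus\{v'\})$, giving the desired contradiction.

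For the construction of $\alpha$, fix $T\gg r$, take a vertex $v\in X$ with $h(v)=T$, and apply Lemma~\ref{lem:superOpp} to $v$. This produces a chamber $\fd\subset X_\infty$ opposite to $\fa$ and a point $v'\in v+\fa$ with $d(v,v')\lesssim 1$ such that, for some chosen $\fc\subset\dlk(v)$, the apartment $E_{\fc,\fd}\subset X$ satisfies $\partial_\infty E_{\fc,\fd}\subset\dlk(v')$. After a small perturbation we may take $v'$ to be a special vertex with $h(v')\gtrsim T$. The set $\partial_\infty E_{\fc,\fd}$ is an apartment of the spherical building $X_\infty$, hence topologically an $(n-1)$-sphere. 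For each $\sigma\in\partial_\infty E_{\fc,\fd}$, let $r_\sigma$ be the unit-speed ray from $v'$ in direction $\sigma$ and set $\alpha(\sigma):=r_\sigma(t_\sigma)$, where $t_\sigma$ is the unique time with $h(r_\sigma(t_\sigma))=0$; Lemma~\ref{lem:sProps}(5) provides a uniform $\epsilon>0$ with $t_\sigma\le h(v')/\epsilon\lesssim T$, so $\alpha$ is Lipschitz.

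To identify the class of $\alpha$, note that since $X$ is contractible and $v'$ is a special vertex of the Euclidean building, the long exact sequence of $(X,X\setminus\{v'\})$ together with excision in a cone neighborhood of $v'$ yields $H_{n-1}(X\setminus\{v'\})\cong H_n(X,X\setminus\{v'\})\cong \tilde H_{n-1}(\Lk(v'))$. Form the geodesic cone $\gamma:D^n\to X$ over $\alpha$ with cone point $v'$ by $\gamma(s,\sigma):=r_\sigma(s\,t_\sigma)$; this is a relative chain in $(X,X\setminus\{v'\})$ with $\partial\gamma=\alpha$. Under the isomorphisms above, $[\alpha]\in H_{n-1}(X\setminus\{v'\})$ corresponds to the class $[L]\in \tilde H_{n-1}(\Lk(v'))$ of the tangent sphere $L$ of $\gamma$ at $v'$ --- a piecewise-linear embedded $(n-1)$-sphere whose chambers are the tangent chambers at $v'$ of the apartments $E_{\fa,\fc'}$ for $\fc'\subset\partial_\infty E_{\fc,\fd}$, all sharing the chamber $\tilde\fa\in\Lk(v')$ pointing toward $\fa$.

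The hard part will be to prove $[L]\ne 0$ in $\tilde H_{n-1}(\Lk(v'))$. By the Solomon--Tits theorem, the thick spherical building $\Lk(v')$ of rank $n$ is homotopy equivalent to a wedge of $(n-1)$-spheres, with top reduced homology free abelian on apartment classes. I would establish nontriviality of $[L]$ by a folding/opposition argument inside $\Lk(v')$ parallel in spirit to Lemmas~\ref{lem:drutuEuclidean} and \ref{lem:superOpp} transplanted to the spherical building and applied to the chamber $\tilde\fa$, showing that the chambers of $L$ --- all opposite to $\tilde\fa$ in their respective apartments of $\Lk(v')$ --- sum to a nonzero integer combination of apartment classes in $\tilde H_{n-1}(\Lk(v'))$. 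Once $[L]\ne 0$ is established, the existence of any extension $\beta:D^n\to N_r(Z)$ with $\partial\beta=\alpha$ would, since $h(v')>r$ forces $\beta$ to miss $v'$, give $[\alpha]=0$ in $H_{n-1}(X\setminus\{v'\})$, contradicting $[\alpha]=[L]\ne 0$. Hence no such $\beta$ exists and $\alpha$ is homotopically nontrivial in $N_r(Z)$.
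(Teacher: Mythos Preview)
Your proposal has a genuine gap: you explicitly flag the nontriviality of $[L]\in\tilde H_{n-1}(\Lk(v'))$ as ``the hard part'' and then do not prove it, only gesturing at a ``folding/opposition argument parallel in spirit'' to earlier lemmas. That step is the entire content of the lemma, and the Solomon--Tits description of $\tilde H_{n-1}(\Lk(v'))$ as free on apartment classes does not by itself tell you that your particular $L$ is nonzero --- you would need to exhibit $L$ as (or compute its coefficient against) an apartment class, and your $L$ is built from tangent chambers of many distinct apartments $E_{\fa,\fc'}$ that need not assemble into a single apartment of $\Lk(v')$. There is also a smaller wrinkle: perturbing $v'$ to a special vertex may shrink $\dlk(v')$ (Lemma~\ref{lem:sProps}(3) only guarantees monotonicity along the $\fa$-direction), so the inclusion $\partial_\infty E_{\fc,\fd}\subset\dlk(v')$ could be lost.

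The paper avoids all of this with a much more direct degree argument. It takes the sphere to be literally $Z\cap E'$, the boundary of the convex body $B=\{x\in E':h(x)\ge 0\}$ in an apartment $E'\cong\R^n$ with $\partial_\infty E'\subset\xop$ and containing a chamber $C$ with $\min_C h>r$. Then $\alpha$ comes equipped with an obvious filling $\beta:D^n\to B\subset E'$ which has local degree $1$ at interior points of $C$. If a hypothetical filling $\beta':D^n\to N_r(Z)$ existed, it would miss $C$ entirely (since $h>r$ on $C$), so gluing $\beta$ and $\beta'$ yields $\gamma:S^n\to X$ with local degree $1$ at interior points of $C$. But $X$ is contractible, so $\gamma_*[S^n]$ is an $n$-boundary, and since $\dim X=n$ the only $n$-boundary is $0$ --- contradiction. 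This bypasses the Solomon--Tits computation and the link homology identification altogether; the only topological input is that $X$ is contractible and $n$-dimensional.
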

\begin{proof}
  Let $C$ be a chamber of $X$ such that $\min_{x\in C} h(x)>r$.  Let
  $E$ be an apartment containing $C$ and asymptotic to $\fa$.  If
  $\fc\subset \pinfty E$ is the chamber of $\pinfty E$ opposite to
  $\fa$, then $C$ is $\fa$-characteristic for $\fc$.  Using
  Lemma~\ref{lem:inductiveOpp}, we can construct an apartment $E'$
  such that $C\subset E'$ and $\pinfty E'\subset \xop$.  In
  particular, the set of points $B=\{x\in E'\mid h(x)\ge 0\}$ is convex
  and compact and contains $C$, so $Z\cap E'$ is bilipschitz
  equivalent to the $(n-1)$-sphere.  Let $\alpha:S^{n-1}\to Z\cap E'$
  be a Lipschitz homeomorphism.  We claim that $\alpha$ is
  homotopically nontrivial in $N_r(Z)$.

  Let $\beta:D^n\to E$ be a homeomorphism from $D^n\to B$ which
  extends $\alpha$.  This has degree 1 on any point in the interior of
  $C$.  By way of contradiction, suppose that $\beta':D^n\to N_r(Z)$
  is another extension of $\alpha$.  Then we can glue $\beta$ and
  $\beta'$ together to get a map $\gamma:S^n\to X$.  Since $\beta'$
  avoids $C$, this map has degree 1 on any point in the interior of
  $C$.  Since $X$ is CAT(0), however, it is contractible, so $\gamma$
  must be null-homotopic, and $\gamma$ sends the fundamental class of
  $S^n$ to an $n$-boundary in $X$.  This contradicts the fact that
  this map has degree 1 on any point in the interior of $C$, because
  $X$ is $n$-dimensional, and any $n$-boundary must be trivial.  
\end{proof}

\subsection{$(n-2)$-connectivity for $\xop$ and constructing $\Omega_\infty$}\label{sec:constructingOmegaInf}

The lemmas of the previous section will let us prove that $\xop$ is
$(n-2)$-connected and construct a Lipschitz map 
$$\Omega_\infty:\Delta_Z^{(n-1)}\to X_\infty^0(\fa)$$ 
which we will use to construct $\Omega$.

Let $\Delta_Z$ be the
infinite-dimensional simplex with vertex set $Z$.  As before, we denote the
simplex of $\Delta_Z$ with vertices $z_0,\dots,z_k$ by $\langle
z_0,\dots,z_k\rangle$.  If $\Delta$ is a simplex of
$\Delta_Z$, we let $\V(\Delta)\subset Z$ be the vertex set of
$\Delta$.

The main lemma of this section is the following:
\begin{lemma}\label{lem:constructOmegaInf}
  There is a cellular map 
  $$\Omega_\infty:\Delta_Z^{(n-1)}\to X_\infty^0(\fa),$$
  a $c>0$ depending on $X$, and a family of points $x_\Delta\in X$,
  one for each simplex $\Delta\subset \Delta_Z^{(n-1)}$, such that 
  \begin{enumerate}
  \item $\mass(\Omega_\infty(\Delta))\le c$
  \item $h(x_\Delta)\ge 0$,
  \item $\Omega_\infty(\Delta)\subset \dlk(x_\Delta),$
  \item if $\Delta'\subset \Delta$, then $x_\Delta\in
    x_{\Delta'}+\fa$,
  \item and $d(x_\Delta,\V(\Delta))\lesssim \diam \V(\Delta)+1$ (consequently, $h(x_\Delta)\lesssim \diam \V(\Delta)+1$),
  \end{enumerate}
  Furthermore, for any $z\in Z$, we have $x_{\langle z\rangle}=z$.
\end{lemma}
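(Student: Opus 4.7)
The plan is to proceed by induction on $k=\dim\Delta$, constructing $\Omega_\infty$ and the $x_\Delta$ simultaneously, one simplex at a time. The base case $k=0$ is essentially by fiat: for each $z\in Z$ set $x_{\langle z\rangle}=z$ and take $\Omega_\infty(\langle z\rangle)$ to be any vertex of $\dlk(z)$. The latter is nonempty because, by the complete apartment system, some apartment $E$ through $z$ is asymptotic to $\fa$, and in the spherical Coxeter complex $\pinfty E$ there is always a chamber opposite to $\fa$, all of whose vertices lie in $\dlk(z)$.

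For the inductive step, fix a $k$-simplex $\Delta$ with $1\le k\le n-1$ and assume $\Omega_\infty$ and $x_{\Delta'}$ are defined on every proper face $\Delta'\subsetneq\Delta$. I build $x_\Delta$ in two stages. First, apply Lemma~\ref{lem:sProps}(4) with $Q=\V(\Delta)$ to obtain an auxiliary $\tilde x\in X$ satisfying $\tilde x\in v+\fa$ for every $v\in\V(\Delta)$ and $d(\tilde x,\V(\Delta))\lesssim\diam\V(\Delta)$; transitivity of sector containment, together with the inductive form of property~(4), gives $\tilde x\in x_{\Delta'}+\fa$ for every $\Delta'\subsetneq\Delta$, so Lemma~\ref{lem:sProps}(3) yields
\[ \Omega_\infty(\partial\Delta)\subset\bigcup_{\Delta'\subsetneq\Delta}\dlk(x_{\Delta'})\subset\dlk(\tilde x). \]
Then apply Lemma~\ref{lem:superOpp} to $\tilde x$ to produce a chamber $\fd$ opposite to $\fa$ together with $x_\Delta:=x'\in\tilde x+\fa$ satisfying $d(\tilde x,x_\Delta)\lesssim 1$, such that every chamber of $\dlk(\tilde x)$ is opposite to $\fd$ and the spherical apartment $\pinfty E_{\fc,\fd}$ spanned by any such $\fc$ and $\fd$ lies in $\dlk(x_\Delta)$.

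Properties~(4), (2), (5) now drop out. Property~(4) is transitivity of sector containment. For~(2): for any vertex $v$ of $\Delta$, $x_\Delta\in v+\fa$, and in any apartment through $v$ asymptotic to $\fa$, Lemma~\ref{lem:sProps}(5) describes $h$ along the ray from $v$ to $x_\Delta$ by $h(r(t))=h(v)+t\cos d(\tau,\sigma)$; since $\sigma\in\fa$ satisfies $d(\tau,\sigma)\le m<\pi/2$ (the strict bound proved at the end of the proof of Lemma~\ref{lem:sProps}), we get $h(x_\Delta)\ge h(v)=0$. Property~(5) is the triangle inequality applied to $d(x_\Delta,\V(\Delta))\le d(x_\Delta,\tilde x)+d(\tilde x,\V(\Delta))$. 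For property~(3), I extend $\Omega_\infty|_{\partial\Delta}$ over $\Delta$ by fixing a vertex $p$ of $\fd$ and coning cell-by-cell: each cell in the support of $\Omega_\infty(\partial\Delta)$ sits in some chamber $\fc\subset\dlk(\tilde x)$, and its simplicial join with $p$ is a cell of $\pinfty E_{\fc,\fd}\subset\dlk(x_\Delta)$. Adjacent chambers $\fc,\fc'$ of $\dlk(\tilde x)$ share a panel $\pi$ that lies in both apartments $\pinfty E_{\fc,\fd}$ and $\pinfty E_{\fc',\fd}$, and the join $p*\pi$ is the same combinatorial simplex of the building in either, so the pieces glue consistently into a cellular chain.

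Property~(1), the uniform mass bound, follows from a bounded-recursion estimate: $\mass\Omega_\infty(\partial\Delta)$ is at most $c$ times the number of codimension-one faces of $\Delta$, which is $\le n+1$, and the join-with-$p$ operation in a spherical apartment enlarges mass by a factor depending only on the bounded geometry of the Coxeter complex; since the recursion runs only $n-1$ times, the final $c$ depends only on $X$. The hardest step, in my view, is not any of these formal checks but the careful use of Lemma~\ref{lem:superOpp}: it is what lets us replace a filling problem in $X_\infty^0(\fa)$ (which is merely $(n-2)$-connected) by a filling problem inside the explicit apartments $\pinfty E_{\fc,\fd}$, which are spheres with a distinguished vertex $p$ over which to cone. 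Without this control, the recursion would not close up, because we could not simultaneously guarantee that the extension of $\Omega_\infty|_{\partial\Delta}$ stays inside some $\dlk(x_\Delta)$ and that $x_\Delta$ remains within distance $\lesssim\diam\V(\Delta)+1$ of $\V(\Delta)$.
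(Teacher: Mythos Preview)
Your overall strategy matches the paper's: induct on $\dim\Delta$, use Lemma~\ref{lem:sProps}(4) to gather the inductive data into a single $\dlk$, then cone toward a chamber $\fd$ furnished by Lemma~\ref{lem:superOpp}. However, two steps in your execution do not go through.

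First, applying Lemma~\ref{lem:sProps}(4) with $Q=\V(\Delta)$ only yields $\tilde x\in v+\fa$ for vertices $v$; it does not give $\tilde x\in x_{\Delta'}+\fa$ for higher-dimensional faces $\Delta'$. Your transitivity argument points the wrong way: you have $\tilde x\in v+\fa$ and, inductively, $x_{\Delta'}\in v+\fa$, but two points lying in a common sector $v+\fa$ need not be comparable in the sector order (already in a single apartment $\R^2$ with $\fa$ the positive quadrant, $(2,1)$ and $(1,2)$ both dominate the origin yet neither dominates the other). The paper instead applies Lemma~\ref{lem:sProps}(4) to $Q=\{x_{\Delta'}:\Delta'\subsetneq\Delta\}$, whose diameter is $\lesssim_k\diam\V(\Delta)+1$ by the inductive bound~(5); this yields $x_0\in x_{\Delta'}+\fa$ for every proper face directly.

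Second, the simplicial join of a cell $\sigma\subset\fc$ with a vertex $p$ of the opposite chamber $\fd$ is in general \emph{not} a simplex of the apartment $\pinfty E_{\fc,\fd}$: already in type $A_2$ the apartment at infinity is a hexagon, and a vertex of $\fc$ together with a vertex of $\fd$ do not span an edge. Moreover, coning geodesically to $p$ can be ill-defined, since a vertex in $\Omega_\infty(\partial\Delta)$ may be antipodal to $p$. The paper (via Lemma~\ref{lem:xopConnected}, itself proved from Lemma~\ref{lem:superOpp}) instead cones along geodesics to the \emph{barycenter} $u$ of $\fd$: its antipode in each $\pinfty E_{\fc,\fd}$ is the barycenter of the opposite chamber, an interior point of a top-dimensional simplex and hence outside the skeleton carrying $\Omega_\infty(\partial\Delta)$. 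This gives $d(u,v)<\pi-\epsilon$ uniformly and a Lipschitz null-homotopy inside $\dlk(x')$, which is then approximated back into the appropriate skeleton to keep the map cellular and the mass bounded. With these two corrections your argument is essentially the paper's.
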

The first condition is essentially a bound on the filling functions of
$\xop$.  The next three conditions ensure that the map $i_{x_\Delta}$ (as defined in the proof sketch at the beginning of the section)
is defined on $\Omega_\infty(\Delta)$ and that its Lipschitz constant
is $\lesssim \diam \V(\Delta)$.  In order to construct $\Omega$ in
the next section, we will glue maps of the form $i_{x_\Delta}\circ
\Omega_\infty|_\Delta$, and we will use the last condition to perform
this gluing.

First, we prove that $\xop$ is
$(n-2)$-connected.  
\begin{lemma}\label{lem:xopConnected}
  If $k<n-1$, there is a $c>0$ such that for every $x\in X$, there is
  a $x'\in X$ such that $x'\in x+\fa$, $d(x,x')\le c$, and if
  $$\alpha:S^k\to \dlk(x)^{(k)},$$
  then there is an extension 
  $$\beta:B^{k+1}\to \dlk(x')^{(k+1)}$$
  such that $\Lip\beta\le c'\Lip\alpha+c'$.

  Consequently, $\xop$ is $(n-2)$-connected.
\end{lemma}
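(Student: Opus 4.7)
The plan is to apply Lemma~\ref{lem:superOpp} to $x$ and then extend $\alpha$ by coning to a point in the chamber $\fd$ that the lemma produces. First, I would apply Lemma~\ref{lem:superOpp} to obtain a chamber $\fd \subset X_\infty$ opposite to $\fa$ and a point $x' \in x + \fa$ with $d(x, x') \lesssim 1$ such that every chamber $\fc \subset \dlk(x)$ is opposite to $\fd$ and the full apartment $E_{\fc,\fd}$ satisfies $\pinfty E_{\fc,\fd} \subset \dlk(x')$. All of these apartments share the common chamber $\fd$, so $\dlk(x')$ contains the entire ``fan'' $\bigcup_{\fc \subset \dlk(x)} E_{\fc,\fd}$, and our task reduces to filling $\alpha$ inside this fan.

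Next, I would fix a generic interior point $v_* \in \fd$ and define a cone map $\beta_0 \colon D^{k+1} \to X_\infty$ by parametrizing each radius of $D^{k+1}$ via the $\mathrm{CAT}(1)$ geodesic from $v_*$ to the corresponding boundary point $\alpha(\hat{x})$, where $\hat{x} \in S^k$. The main subtlety here is that for $\beta_0$ to be well-defined, the geodesic from $v_*$ to $\alpha(\hat{x})$ must agree whether viewed inside $E_{\fc,\fd}$ or $E_{\fc',\fd}$ when $\alpha(\hat{x})$ lies on a wall shared by two chambers $\fc, \fc' \subset \dlk(x)$; this is guaranteed by $\mathrm{CAT}(1)$ uniqueness of geodesics provided $d(v_*, \alpha(\hat{x})) < \pi$. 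A generic choice of $v_*$ ensures this uniform bound, since the ``bad'' set of $v_*$'s antipodal in some $E_{\fc,\fd}$ to a point in the image of $\alpha$ is a union of finitely many $k$-dimensional subsets of $\fd$, which has measure zero in $\fd$ whenever $k < n - 1 = \dim \fd$. With such a $v_*$, each geodesic lies inside the corresponding apartment $E_{\fc,\fd} \subset \dlk(x')$, so $\beta_0$ takes values in $\dlk(x')$, and a standard cone estimate yields $\Lip \beta_0 \lesssim \Lip \alpha + 1$.

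Finally, since the image of $\beta_0$ is $(k+1)$-dimensional while $\dlk(x')$ has dimension $n - 1 \ge k + 1$, I would push $\beta_0$ combinatorially into the $(k+1)$-skeleton: after simplicially approximating $\alpha$, each cone simplex over a $k$-simplex of the approximation meets only boundedly many chambers of the ambient apartment $E_{\fc,\fd}$ (those traversed by a minimal gallery from $\fc$ to $\fd$), and each intersection is a $(k+1)$-polytope that can be triangulated into $(k+1)$-faces of chambers, yielding the desired $\beta \colon D^{k+1} \to \dlk(x')^{(k+1)}$ with $\Lip \beta \lesssim \Lip \alpha + 1$. The ``consequently'' statement then follows by compactness: any continuous $\alpha \colon S^k \to \xop$ with $k \le n - 2$ can be approximated by a Lipschitz map whose compact image meets only finitely many chambers of $\xop$, and by parts~(3) and~(4) of Lemma~\ref{lem:sProps} these chambers all lie in $\dlk(x)$ for a single $x$, so the main statement provides the desired extension into $\dlk(x') \subset \xop$.
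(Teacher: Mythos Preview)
Your approach is essentially the same as the paper's: apply Lemma~\ref{lem:superOpp} to obtain $\fd$ and $x'$, cone $\alpha$ toward a point of $\fd$ along CAT(1) geodesics (which stay in the apartments $\pinfty E_{\fc,\fd}\subset\dlk(x')$), and then push the cone into the $(k+1)$-skeleton. Your ``consequently'' paragraph also matches the paper's.

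One small point: the genericity argument for $v_*$ is unnecessary and, as written, slightly dangerous. Since $\alpha$ lands in the $(k)$-skeleton with $k<n-1$, every $\alpha(\hat x)$ lies in the boundary of some chamber, whereas the antipode in any apartment $\pinfty E_{\fc,\fd}$ of an \emph{interior} point of $\fd$ is an interior point of the opposite chamber. Hence the ``bad set'' inside the interior of $\fd$ is actually empty, not merely measure zero, and any fixed interior point --- in particular the barycenter $u$, which is what the paper uses --- satisfies $d(u,\alpha(\hat x))\le\pi-\epsilon_0$ for an $\epsilon_0>0$ depending only on the Coxeter type. Your formulation instead lets $v_*$ depend on $\alpha$, so the implicit constant in ``$\Lip\beta_0\lesssim\Lip\alpha+1$'' a priori depends on $\alpha$ as well, which would spoil the uniformity of $c'$. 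Replacing ``generic $v_*$'' by ``the barycenter of $\fd$'' fixes this immediately and makes the argument identical to the paper's.
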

\begin{proof}
  Let $x'\in X$ and $\fd\subset \dlk(x')$ be opposite to every chamber
  of $\dlk(x)$ as in Lemma~\ref{lem:superOpp}.  Let $u$ be the
  barycenter of $\fd$.  There is an $\epsilon>0$ such that
  $d_{X\infty}(u,v)<\pi-\epsilon$ for any $v\in \dlk(x)^{(k)}$.  By
  our choice of $\fd$, the geodesic from $v$ to $u$ is contained in
  $\dlk(x')$.
  
  Let 
  $$\gamma:\dlk(x)^{(k)}\times [0,1]\to \dlk(x')$$ 
  be the map which sends $v\times [0,1]$ to the geodesic between $v$
  and $u$.  This is Lipschitz, with Lipschitz constant depending on
  $\epsilon$.  Define $\beta_0:S^k\times [0,1]\to \dlk(x')$ by
  $\beta_0(v,t)=\gamma(\alpha(v),t)$.  This is a null-homotopy of
  $\alpha$, and
  $$\Lip \beta_0\le (\Lip\gamma)(1+\Lip \alpha).$$
  We obtain $\beta$ by approximating $\beta_0$ in
  $\dlk(x)^{(k+1)}$; this increases the Lipschitz constant by at
  most a multiplicative factor.  

  To conclude that $\xop$ is $(n-2)$-connected, consider a map
  $\alpha:S^k\to \xop$.  This can be approximated by a simplicial map
  $\alpha':S^k\to \xop^{(k)}$.  The image of $\alpha'$ has finitely
  many simplices, and since every simplex of $\xop$ is contained in
  $\dlk(y)$ for some $y$, there is an $x\in X$ such that the image
  of $\alpha'$ is contained in $\dlk(x)^{(k)}$.
  Therefore, $\alpha'$ is null-homotopic in $\dlk(x')^{(k)}$ for
  some $x'$, and $\dlk(x')^{(k)}\subset\xop$.
\end{proof}

Next, we use this lemma to construct $\Omega_\infty$:
\begin{proof}[{Proof of Lemma~\ref{lem:constructOmegaInf}}]
  We construct $\Omega_\infty$ inductively.  First, for each $z\in Z$,
  we let $\Omega_\infty(z)$ be an arbitrary vertex of $\dlk(z)$.  Then
  choosing $x_{\langle z\rangle}=z$ satisfies the
  conditions of the lemma.

  Now suppose that $\Delta$ is a simplex of $\Delta_Z$ with $1\le \dim
  \Delta=k \le n-1$ and suppose that $\Omega_\infty$ is defined on
  $\partial \Delta$.  Then, if
  $$L=\bigcup_{\Delta'\subset \Delta}\dlk(x_{\Delta'}),$$
  then $\Omega_\infty|_{\partial \Delta}$ is a map with image in
  $L$.  By induction, we know that 
  $$d(x_{\Delta'},\V(\Delta'))\lesssim_k \diam \V(\Delta')+1,$$
  with implicit constant depending on $k$, so
  $$\diam \{x_{\Delta'}\}_{\Delta'\subset \Delta}\lesssim_k \diam \V(\Delta)+1.$$
  By Lemma~\ref{lem:sProps}(4), there is an $x_0\in X$ such that
  $d(x_0,\V(\Delta))\lesssim_k \diam \V(\Delta)+1$ and $x_0\in x_{\Delta'}+\fa$
  for any face $\Delta'$ of $\Delta$. By Lemma~\ref{lem:sProps}(3),
  $L\subset \dlk(x_0)$.

  By Lemma~\ref{lem:xopConnected}, there is an $x' \in X$ such that
  $x'\in x_0+\fa$ and an extension
  $$\beta:\Delta \to \dlk(x')^{(k+1)}$$
  of $\alpha$ such that $\Lip\beta$ and $d(x_0,x')$ are bounded by a
  constant depending on $k$.  If we define
  $\Omega_\infty|_{ \Delta}=\beta$ and $x_\Delta=x'$, then 
  $$d(x_\Delta,\V(\Delta))\lesssim_k \diam \V(\Delta)+1.$$
  Since $\Delta$ is finite-dimensional, we may drop the dependence on
  $k$, and the lemma holds.
\end{proof}

\subsection{Constructing $\Omega$}\label{sec:constructingOmega}

Finally, we construct a map $\Omega:\Delta_Z^{(n-1)}\to Z$ satisfying
the hypotheses of Lemma~\ref{lem:LipToInfSimp}.  We will use a
family of maps $i_x:\dlk(x)\to Z$ for $x\in X, h(x)\ge 0$.

For any $x\in X$ and $\sigma\in X_\infty$, there is a unit-speed ray
$r_\sigma:[0,\infty)\to X$ emanating from $x$ and traveling in the
direction of $\sigma$.  Define 
$$X_\infty^*=X_\infty \times [0,\infty)/X_\infty\times\{0\}$$
to be a space of ``vectors'' based at $x$.  
We can define an
exponential map $e_x:X_\infty^*\to X$ by letting
$$e_x(\sigma,t)=r_\sigma(t).$$
For each chamber $\fa$ of $X_\infty$, this map sends the open cone $\fa
\times [0,\infty)/\fa\times\{0\}$ to a sector corresponding to
$\fa$; we give $X_\infty^*$ a metric so that this is an isometry.  This
makes $e_x$ a distance-decreasing map.  Note also that, by the
convexity of the distance function on $X$, we have
$$d(e_x(\sigma,t), e_{x'}(\sigma,t))\le d(x,x').$$

We can use $e_x$ to construct a map from $\dlk(x)$ to
$Z$:
\begin{lemma}
  Let $x\in X$ be such that $h(x)\ge 0$.  Then there is a map $i_x:
  \dlk(x) \to Z$ given by
  $$i_x(\sigma)=e_x(\sigma, \frac{-h(x)}{\cos d(\tau,\sigma)}).$$
  This map has Lipschitz constant $\lesssim h(x)$, with implicit
  constant depending on $X$ and $p(\tau)$.
\end{lemma}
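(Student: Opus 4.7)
The plan is to first check the map is well-defined and then bound the Lipschitz constant by splitting a two-point comparison into a ``radial'' and a ``spherical'' contribution.

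For well-definedness, fix $\sigma\in\dlk(x)$, let $r$ be the ray from $x$ in direction $\sigma$, and set $t_\sigma=-h(x)/\cos d(\tau,\sigma)$. By Lemma~\ref{lem:sProps}(5), $-\cos d(\tau,\sigma)>\epsilon>0$ where $\epsilon$ depends only on $X$ and $p(\tau)$, so $t_\sigma\ge 0$ since $h(x)\ge 0$, and
$$h(r(t_\sigma))=h(x)+t_\sigma\cos d(\tau,\sigma)=0,$$
so $i_x(\sigma)=e_x(\sigma,t_\sigma)\in Z$.

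For the Lipschitz bound, let $\sigma,\sigma'\in\dlk(x)$ and set $t=t_\sigma$, $t'=t_{\sigma'}$. Assume without loss of generality that $t\le t'$. By the triangle inequality,
$$d(i_x(\sigma),i_x(\sigma'))\le d(e_x(\sigma,t),e_x(\sigma',t))+d(e_x(\sigma',t),e_x(\sigma',t')).$$
The second term is just $|t'-t|$ because $s\mapsto e_x(\sigma',s)$ is a unit-speed ray. For the first term, since $X$ is CAT(0), the comparison inequality with the Euclidean cone on $X_\infty$ (equivalently, the fact that $e_x:X_\infty^*\to X$ is distance-decreasing as noted just before the lemma) yields
$$d(e_x(\sigma,t),e_x(\sigma',t))\le 2t\sin\tfrac{d(\sigma,\sigma')}{2}\le t\, d(\sigma,\sigma').$$
Since $t=-h(x)/\cos d(\tau,\sigma)\le h(x)/\epsilon$, this term is bounded by $(h(x)/\epsilon)\, d(\sigma,\sigma')$.

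It remains to bound $|t'-t|$. The function $\phi(y)=1/(-\cos y)$ is smooth on the set $\{y:-\cos y>\epsilon\}\subset(\pi/2,\pi]$, with $\phi'(y)=-\sin y/\cos^2 y$, so $|\phi'(y)|\le 1/\epsilon^2$ on this set. Hence, using the 1-Lipschitz property of $\sigma\mapsto d(\tau,\sigma)$,
$$|t'-t|=h(x)\,|\phi(d(\tau,\sigma'))-\phi(d(\tau,\sigma))|\le \frac{h(x)}{\epsilon^2}\,|d(\tau,\sigma')-d(\tau,\sigma)|\le\frac{h(x)}{\epsilon^2}\,d(\sigma,\sigma').$$
Combining the two bounds,
$$d(i_x(\sigma),i_x(\sigma'))\le\left(\frac{1}{\epsilon}+\frac{1}{\epsilon^2}\right)h(x)\, d(\sigma,\sigma')\lesssim h(x)\, d(\sigma,\sigma'),$$
with implicit constant depending only on $\epsilon$, hence on $X$ and $p(\tau)$. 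The only genuinely delicate point is justifying the inequality $d(e_x(\sigma,t),e_x(\sigma',t))\le t\, d(\sigma,\sigma')$ when $\sigma,\sigma'$ need not lie in a common apartment through $x$; this is handled by the CAT(0) comparison with the Euclidean cone, which is precisely what makes $e_x$ distance-decreasing.
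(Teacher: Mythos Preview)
Your proof is correct and follows the same route the paper intends: well-definedness from Lemma~\ref{lem:sProps}(5), then the Lipschitz bound via the distance-decreasing property of $e_x$ together with the uniform lower bound $-\cos d(\tau,\sigma)>\epsilon$. The paper's own proof is essentially a one-line ``the lemma follows'' from Lemma~\ref{lem:sProps}(5); your radial/spherical decomposition and the mean-value estimate on $\phi(y)=1/(-\cos y)$ are exactly the details one would supply to unpack that line.
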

\begin{proof}
  By Lemma~\ref{lem:sProps}(5), 
  $$h(e_{x}(\sigma,t))=h(x)+t \cos d(\tau,\sigma)$$
  for any $\sigma\in \dlk(x)$ and there is an $\epsilon$ such that
  $-\cos d(\tau,\sigma)\ge \epsilon>0$.  The lemma follows.
\end{proof}

Furthermore, the map $(x,\sigma)\mapsto i_x(\sigma)$ is locally
Lipschitz:
\begin{lemma}\label{lem:iLocallyLipschitz}
  Let $x,x' \in X$ be such that $h(x),h(x')\ge 0$.  Let
  $\sigma,\sigma'\in \dlk(x)\cap \dlk(x')$.  Then
  there is a $c>0$ depending on $X$ such that
  $$d(i_x(\sigma),i_{x'}(\sigma'))\le c d(x,x')+c h(x) d(\sigma,\sigma').$$  
\end{lemma}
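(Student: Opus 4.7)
The plan is to decompose the bound via the triangle inequality
\[
d(i_x(\sigma),i_{x'}(\sigma'))\le d(i_x(\sigma),i_{x'}(\sigma))+d(i_{x'}(\sigma),i_{x'}(\sigma'))
\]
and handle the two summands separately, each being a one-variable estimate for the map $(y,\eta)\mapsto i_y(\eta)=e_y(\eta,t(y,\eta))$ with $t(y,\eta):=-h(y)/\cos d(\tau,\eta)$.

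For the first summand I would insert the intermediate point $e_{x'}(\sigma,t(x,\sigma))$, writing
\[
d(i_x(\sigma),i_{x'}(\sigma))\le d(e_x(\sigma,t(x,\sigma)),e_{x'}(\sigma,t(x,\sigma)))+d(e_{x'}(\sigma,t(x,\sigma)),e_{x'}(\sigma,t(x',\sigma))).
\]
The first piece is bounded by $d(x,x')$ by the convexity inequality $d(e_x(\sigma,t),e_{x'}(\sigma,t))\le d(x,x')$ recorded just before the lemma. The second piece equals $|t(x,\sigma)-t(x',\sigma)|$ since $e_{x'}(\sigma,\cdot)$ is a unit-speed geodesic ray, and
\[
|t(x,\sigma)-t(x',\sigma)|=\frac{|h(x)-h(x')|}{|\cos d(\tau,\sigma)|}\le \frac{d(x,x')}{\epsilon},
\]
using $|h(x)-h(x')|\le d(x,x')$ and the lower bound $-\cos d(\tau,\sigma)\ge\epsilon>0$ supplied by Lemma~\ref{lem:sProps}(5). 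Thus $d(i_x(\sigma),i_{x'}(\sigma))\lesssim d(x,x')$.

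For the second summand, apply the Lipschitz estimate from the preceding lemma to get $d(i_{x'}(\sigma),i_{x'}(\sigma'))\lesssim h(x')\,d(\sigma,\sigma')$. Since $h$ is $1$-Lipschitz, $h(x')\le h(x)+d(x,x')$, and since the angular metric on $X_\infty$ is bounded (by $\pi$), the term $d(x,x')\,d(\sigma,\sigma')$ is $\lesssim d(x,x')$. Hence
\[
d(i_{x'}(\sigma),i_{x'}(\sigma'))\lesssim h(x)\,d(\sigma,\sigma')+d(x,x'),
\]
and adding the two contributions yields the desired inequality.

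There is no serious obstacle here: the proof is a routine bookkeeping argument that isolates the two independent ways in which $(y,\eta)\mapsto i_y(\eta)$ can vary, then invokes the already-established bounds (CAT(0) convexity of $e_y$, the uniform lower bound $-\cos d(\tau,\sigma)\ge\epsilon$, and the global Lipschitz estimate for $i_{x'}$). The only small subtlety is to absorb the cross term $d(x,x')\,d(\sigma,\sigma')$ into $d(x,x')$, which works precisely because $X_\infty$ has bounded angular diameter.
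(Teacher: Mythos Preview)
Your proof is correct and follows essentially the same approach as the paper: a triangle inequality split followed by the convexity bound $d(e_x(\sigma,t),e_{x'}(\sigma,t))\le d(x,x')$, the $1$-Lipschitz property of $h$, the lower bound $-\cos d(\tau,\cdot)\ge\epsilon$, and the Lipschitz estimate for $i_y$ from the preceding lemma. The only cosmetic difference is your choice of intermediate point: you split through $i_{x'}(\sigma)$, whereas the paper splits through $i_x(\sigma')$; the paper's order yields $h(x)$ directly and avoids the small cross-term absorption step you perform at the end, but the content is the same.
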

\begin{proof}
  By the previous lemma and the remark before it, there is a $c_0>0$ such that
  \begin{align*}
    d(i_x(\sigma),i_{x'}(\sigma'))& \le d(i_x(\sigma),i_{x}(\sigma'))+d(i_{x}(\sigma'),i_{x'}(\sigma'))\\
    & \le c_0 h(x) d(\sigma,\sigma') +d\Bigl(e_{x}\bigl(\sigma',
    \frac{-h(x)}{\cos d(\tau,\sigma')}\bigr),e_{x'}\bigl(\sigma',
    \frac{-h(x')}{\cos d(\tau,\sigma')}\bigl)\Bigr)\\
    & \le  c_0 h(x) d(\sigma,\sigma')+d(x,x')+\frac{|h(x)-h(x')|}{-\cos d(\tau,\sigma')}.
  \end{align*}
  Since $d(x,x')\lesssim h(x')$, the lemma follows.
\end{proof}

\begin{figure}
  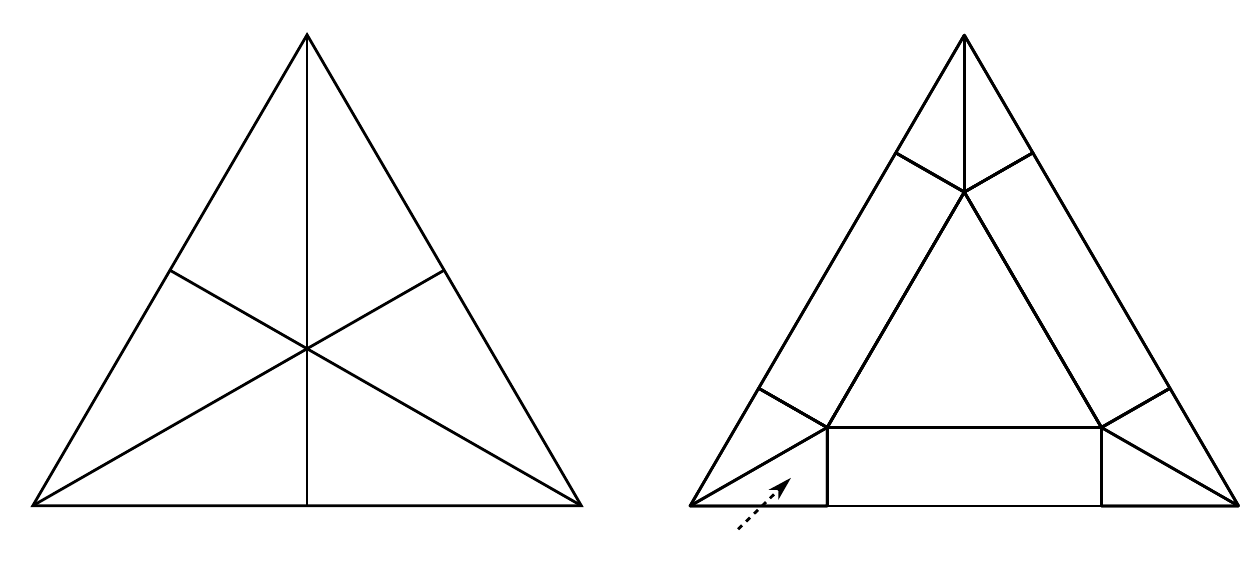
  \caption{\label{fig:triangles} Cells of the ``exploded simplex''
    $E(\Delta)$ are naturally products of cells of $\Delta$ and cells
    of $B(\Delta)$.}
\end{figure} 
We construct $\Omega$ by piecing together maps of the form
$i_{x_\Delta}(\Omega_\infty(\Delta))$, where $\Delta$ ranges over the
simplices of $\Delta_Z$.  The main problem is that if $\Delta'$ is a
face of $\Delta$, the maps $i_{x_\Delta}(\Omega_\infty(\Delta))$ and
$i_{x_{\Delta'}}(\Omega_\infty(\Delta'))$ need not agree, since
$x_\Delta\ne x_{\Delta'}$, so we need to add some ``padding'' to make
these maps agree.

Part of the construction is illustrated in Figure~\ref{fig:triangles}: for
each simplex $\Delta$ of $\Delta_Z$, we ``explode'' the barycentric
subdivision $B(\Delta)$ to get a complex $E(\Delta)$ by inserting a
copy $\Delta'$ of $\Delta$ in the middle.  Each cell in this
subdivision is of the form $\Delta_1\times \Delta_2$, where $\Delta_1$
is a face of $\Delta$ and $\Delta_2$ is a face of $B(\Delta)$.  To be
more specific, note that we can label each vertex of $B(\Delta)$ by a
face $\delta$ of $\Delta$, and the vertex labels of a simplex $\langle
\delta_0,\dots, \delta_k\rangle$ form a flag
$\delta_0\subset \dots \subset \delta_k$.  Then each cell of
$E(\Delta)$ is of the form
$$\delta\times \langle \delta_0,\dots,\delta_k\rangle,$$
for some flag $\delta_0\subset \dots\subset \delta_k$ in $\Delta$ and
some face $\delta$ of $\delta_0$.  The map $\rho_1:E(\Delta)\to
\Delta$ which projects each simplex to its first factor is a
continuous map which sends $\Delta'$ homeomorphically to
$\Delta$.  Likewise, the map $\rho_2:E(\Delta)\to B(\Delta)$ which
projects each cell to its second factor is a continuous map that
collapses $\Delta'$ to the barycenter of $\Delta$.  

We define a map $x:B(\Delta)\to X$ on the vertices of
$B(\Delta)$ by sending the point $\langle \delta\rangle$ to the point
$x_\delta$ for every face $\delta\subset \Delta$.  We define $x$ on the rest of $B(\Delta)$ by linear
interpolation.  That is, if $\delta_0\subset \dots \subset \delta_k$ is a flag of
faces of $\Delta$, then we have
$x_{\delta_i}\in x_{\delta_0}+a$ for all $i$.  Therefore, all the
$x_{\delta_i}$ lie in a common apartment, and we can define $x$ on
$\langle \delta_0, \dots, \delta_k\rangle$ by linearly interpolating 
between the $x_{\delta_i}$'s.  This map has Lipschitz constant
$\lesssim \diam \V(\Delta)$ on $\Delta$.

For any cell
$$\sigma=\delta\times \langle \delta_0,\dots,\delta_k\rangle$$
of $E(\Delta)$ and any $s\in \sigma$, let $x_s=x(\rho_2(s))$.
We have
$x_s\in x_\delta+a$ and therefore
$$\Omega_\infty(\delta)\subset \dlk(x_\delta)\subset \dlk(x_s)$$
This means that 
$$i_{x_s}(\Omega_\infty(\rho_1(s)))$$
is defined for every $s\in \sigma$, so we define
$$\Omega(s)=i_{x_s}(\Omega_\infty(\rho_1(s))).$$

Finally, we check that this definition satisfies the conditions of
Lemma~\ref{lem:LipToInfSimp}.  Since $x_{\langle z\rangle}=z$ for any
$z\in Z$, we have $\Omega(\langle z\rangle)=z$, so the first condition
is satisfied.  Let $\sigma$ be a cell of $E(\Delta)$ as above and let
$s,t\in \sigma$.  Let $x_s=x(\rho_2(s))$, $x_t=x(\rho_2(t))$.
By
Lemma~\ref{lem:iLocallyLipschitz}, we have
$$d(\Omega(s),\Omega(t))\le c d(x_s,x_t)+c h(x_s) d(\Omega_\infty(\rho_1(s)),\Omega_\infty(\rho_1(t))).$$
Since $x_\Delta\in x_{\delta_i}+\fa$ for each $i=1,\dots k$, we have
$x_\Delta\in x_s+\fa$ and thus $h(x_s)\lesssim \diam
\V(\Delta)$.  Since $\rho_1$, $\rho_2$, and $\Omega_\infty$ are
Lipschitz with constants depending only on $X$ and $\Lip(x|_\Delta)\lesssim \diam
\V(\Delta)$, each term in the inequality above is $\lesssim\diam
\V(\Delta) d(s,t)$.  Therefore, 
$$\Lip(\Omega_\infty|_\Delta)\lesssim \diam \V(\Delta)$$
for every simplex $\Delta\subset \Delta_Z^{(n-1)}$, as desired.

This proves Lemma~\ref{lem:CoarseInfSimp}.

\bibliography{higherSol}

\newcommand{\etalchar}[1]{$^{#1}$}
\def\cprime{$'$} \def\lfhook#1{\setbox0=\hbox{#1}{\ooalign{\hidewidth
  \lower1.5ex\hbox{'}\hidewidth\crcr\unhbox0}}}
\begin{thebibliography}{ABD{\etalchar{+}}12}

\bibitem[AB08]{AbramBro}
P.~Abramenko and K.~S. Brown.
\newblock {\em Buildings}, volume 248 of {\em Graduate Texts in Mathematics}.
\newblock Springer, New York, 2008.
\newblock Theory and applications.

\bibitem[ABD{\etalchar{+}}12]{ABBDY}
A.~Abrams, N.~Brady, P.~Dani, M.~Duchin, and R.~Young.
\newblock Pushing fillings in right-angled artin groups.
\newblock {\em Journal of the London Mathematical Society}, 2012.

\bibitem[Abr96]{AbramTwin}
P.~Abramenko.
\newblock {\em Twin buildings and applications to {S}-arithmetic groups},
  volume 1641 of {\em Lecture Notes in Mathematics}.
\newblock Springer-Verlag, Berlin, 1996.

\bibitem[AK00]{AmbrosioKirchheim}
L.~Ambrosio and B.~Kirchheim.
\newblock Currents in metric spaces.
\newblock {\em Acta Math.}, 185(1):1--80, 2000.

\bibitem[BEW]{BestEskWort}
M.~Bestvina, A.~Eskin, and K.~Wortman.
\newblock {Filling boundaries of coarse manifolds in semisimple and solvable
  arithmetic groups}.
\newblock arXiv:1106.0162.

\bibitem[BW07]{BuxWortFiniteness}
K.-U. Bux and K.~Wortman.
\newblock Finiteness properties of arithmetic groups over function fields.
\newblock {\em Invent. Math.}, 167(2):355--378, 2007.

\bibitem[BW11]{BuxWortConnectivity}
K.-U. Bux and K.~Wortman.
\newblock Connectivity properties of horospheres in {E}uclidean buildings and
  applications to finiteness properties of discrete groups.
\newblock {\em Invent. Math.}, 185(2):395--419, 2011.

\bibitem[dCT10]{DeCorTess}
Y.~de~Cornulier and R.~Tessera.
\newblock Metabelian groups with quadratic {D}ehn function and
  {B}aumslag-{S}olitar groups.
\newblock {\em Confluentes Math.}, 2(4):431--443, 2010.

\bibitem[Dru04]{DrutuFilling}
C.~Dru{\c{t}}u.
\newblock Filling in solvable groups and in lattices in semisimple groups.
\newblock {\em Topology}, 43(5):983--1033, 2004.

\bibitem[ECH{\etalchar{+}}92]{ECHLPT}
D.~B.~A. Epstein, J.~W. Cannon, D.~F. Holt, S.~V.~F. Levy, M.~S. Paterson, and
  W.~P. Thurston.
\newblock {\em Word processing in groups}.
\newblock Jones and Bartlett Publishers, Boston, MA, 1992.

\bibitem[Ger93]{GerstenSurv}
S.~M. Gersten.
\newblock Isoperimetric and isodiametric functions of finite presentations.
\newblock In {\em Geometric group theory, {V}ol.\ 1 ({S}ussex, 1991)}, volume
  181 of {\em London Math. Soc. Lecture Note Ser.}, pages 79--96. Cambridge
  Univ. Press, Cambridge, 1993.

\bibitem[Gro]{Groft}
C.~Groft.
\newblock Generalized {D}ehn functions {II}.
\newblock arXiv:0901.2317.

\bibitem[Gro83]{GroFRM}
M.~Gromov.
\newblock Filling {R}iemannian manifolds.
\newblock {\em J. Differential Geom.}, 18(1):1--147, 1983.

\bibitem[Gro93]{GroAI}
M.~Gromov.
\newblock Asymptotic invariants of infinite groups.
\newblock In {\em Geometric group theory, Vol.\ 2 (Sussex, 1991)}, volume 182
  of {\em London Math. Soc. Lecture Note Ser.}, pages 1--295. Cambridge Univ.
  Press, Cambridge, 1993.

\bibitem[Gro96]{GroCC}
M.~Gromov.
\newblock Carnot-{C}arath\'eodory spaces seen from within.
\newblock In {\em Sub-Riemannian geometry}, volume 144 of {\em Progr. Math.},
  pages 79--323. Birkh\"auser, Basel, 1996.

\bibitem[KL97]{KleinerLeeb}
B.~Kleiner and B.~Leeb.
\newblock Rigidity of quasi-isometries for symmetric spaces and {E}uclidean
  buildings.
\newblock {\em Inst. Hautes \'Etudes Sci. Publ. Math.}, (86):115--197 (1998),
  1997.

\bibitem[LMR00]{LMR}
A.~Lubotzky, S.~Mozes, and M.~S. Raghunathan.
\newblock The word and {R}iemannian metrics on lattices of semisimple groups.
\newblock {\em Inst. Hautes \'Etudes Sci. Publ. Math.}, (91):5--53 (2001),
  2000.

\bibitem[LP96]{LeuzPitRk2}
E.~Leuzinger and C.~Pittet.
\newblock Isoperimetric inequalities for lattices in semisimple {L}ie groups of
  rank {$2$}.
\newblock {\em Geom. Funct. Anal.}, 6(3):489--511, 1996.

\bibitem[LS05]{LangSch}
U.~Lang and T.~Schlichenmaier.
\newblock Nagata dimension, quasisymmetric embeddings, and {L}ipschitz
  extensions.
\newblock {\em Int. Math. Res. Not.}, (58):3625--3655, 2005.

\bibitem[Pit95]{PittetHMG}
C.~Pittet.
\newblock Hilbert modular groups and isoperimetric inequalities.
\newblock In {\em Combinatorial and geometric group theory ({E}dinburgh,
  1993)}, volume 204 of {\em London Math. Soc. Lecture Note Ser.}, pages
  259--268. Cambridge Univ. Press, Cambridge, 1995.

\bibitem[{Sch}]{Schulz}
B.~{Schulz}.
\newblock {Spherical subcomplexes of spherical buildings}.
\newblock arXiv:1007.2407.

\bibitem[Wen08]{WengerShort}
S.~Wenger.
\newblock A short proof of {G}romov's filling inequality.
\newblock {\em Proc. Amer. Math. Soc.}, 136(8):2937--2941, 2008.

\bibitem[Whi84]{White}
B.~White.
\newblock Mappings that minimize area in their homotopy classes.
\newblock {\em J. Differential Geom.}, 20(2):433--446, 1984.

\bibitem[You]{YoungQuad}
R.~Young.
\newblock {The Dehn function of {${\rm SL}(n;\Bbb Z)$}}.
\newblock arXiv:0912.2697.

\end{thebibliography}
\end{document}